\documentclass[reqno]{amsart}

\usepackage{amsmath, amsfonts, amssymb, hyperref, tikz}
\usepackage{graphicx}
\usepackage{mathtools} 

\usetikzlibrary{matrix,arrows,shapes}
\usetikzlibrary{backgrounds}


{\theoremstyle{plain}
\newtheorem{theorem}{Theorem}[section]

\newtheorem{proposition}[theorem]{Proposition}
\newtheorem{lemma}[theorem]{Lemma}
\newtheorem{corollary}[theorem]{Corollary}
\newtheorem{question}[theorem]{Question}
}
{\theoremstyle{definition}
\newtheorem{definition}[theorem]{Definition}
\newtheorem*{definition*}{Definition}
\newtheorem{remark}[theorem]{Remark}

\newtheorem{example}[theorem]{Example}
}

\numberwithin{equation}{section}
\numberwithin{figure}{section}
 
\renewcommand{\theenumi}{\alph{enumi}}

\newcommand{\gen}[1]{K_{(#1)}}
\newcommand{\genx}[1]{K_{#1}}
\newcommand{\om}{\omega} 
\newcommand{\Om}{\Omega}
\newcommand{\kpw}{\gen{P,\omega}}
\newcommand{\kqt}{\gen{Q,\tau}}
\newcommand{\popf}{f}
\newcommand{\leqF}{\leq_F}
\newcommand{\leqM}{\leq_M}
\newcommand{\domleq}{\leq_{\mathrm{dom}}}
\newcommand{\pcovered}{\prec_P}

\newcommand{\des}{\mathrm{Des}}

\renewcommand{\L}{\mathcal{L}}
\newcommand{\Fsupp}{\mathrm{supp}_F}
\newcommand{\Msupp}{\mathrm{supp}_M}
\newcommand{\sd}[1]{P_{#1}}
\newcommand{\rotatep}[1]{(#1)^*}
\newcommand{\rotatex}[1]{#1^*}
\newcommand{\switch}[1]{\overline{(#1)}}
\newcommand{\switchx}[1]{\overline{#1}}
\newcommand{\jump}{\mathrm{jump}}
\newcommand{\lessthan}{S_<}
\newcommand{\ur}[2]{\mathcal{#1}[#2 \to #1_i]}
\newcommand{\urp}{\ur{P}{i}}
\newcommand{\urq}{\ur{Q}{i}}
\renewcommand{\P}{\mathcal{P}}
\newcommand{\CQ}{\mathcal{Q}}
\newcommand{\urpom}{(\urp, \Om)}
\newcommand{\kurp}{\genx{\urp}}

\newcommand{\I}{\mathcal{I}}
\newcommand{\card}[2]{#1_{\mathrm{#2}}}

\begin{document}
\title{Positivity among $P$-partition generating functions}

\author{Nathan R.\,T. Lesnevich}
\address{Department of Mathematics, Oklahoma State University, Stillwater, OK 74075, USA}
\email{\href{mailto:nlesnev@okstate.edu}{nlesnev@okstate.edu}}

\author{Peter R.\,W. McNamara}
\address{Department of Mathematics, Bucknell University, Lewisburg, PA 17837, USA}
\email{\href{mailto:peter.mcnamara@bucknell.edu}{peter.mcnamara@bucknell.edu}}

\subjclass[2020]{Primary 05E05; Secondary 06A11, 06A07} 
\keywords{$P$-partition, labeled poset, quasisymmetric function, $F$-positive, linear extension}

\begin{abstract}
We seek simple conditions on a pair of labeled posets that determine when the difference of their $(P,\om)$-partition enumerators is $F$-positive, i.e., positive in Gessel's fundamental basis.  This is a quasisymmetric analogue of the extensively studied problem of finding conditions on a pair of skew shapes that determine when the difference of their skew Schur functions is Schur-positive.  We determine necessary conditions and separate sufficient conditions for $F$-positivity, and show that a broad operation for combining posets preserves positivity properties.   We conclude with classes of posets for which we have conditions that are both necessary and sufficient.
\end{abstract}

\vspace*{-2mm}
\maketitle

\tableofcontents

\section{Introduction}\label{sec:intro} 
 
Symmetric functions play a major role in algebraic combinatorics, with positivity questions being a driving force behind their study.  Perhaps the most famous such positivity result is that the Littlewood--Richardson coefficients $c^\lambda_{\mu\nu}$ are non-negative.  Most importantly for us,  $c^\lambda_{\mu\nu}$ arises when we expand a skew Schur function in the Schur basis: 
\begin{equation}\label{equ:lr}
s_{\lambda/\mu} = \sum_\nu c^\lambda_{\mu\nu} s_\nu\,.  
\end{equation}
In particular, $s_{\lambda/\mu}$ is Schur-positive, meaning it has all non-negative coefficients when expanded in the Schur basis.  Going deeper into this positivity result, can we say that a given $s_{\lambda/\mu}$ is ``more Schur-positive'' than some other $s_{\alpha/\beta}$?  More precisely, when is $s_{\lambda/\mu} - s_{\alpha/\beta}$ Schur-positive?  Much attention has been given to this question, including \cite{BBR06, BaOr14,  DoPy07, FFLP05,  Kir04, KWvW08, LLT97, LPP07, McN14, McvW09b, McvW12, Oko97, RhSk06, RhSk10, TovW18}.  Two typical goals are to determine simple conditions on the shapes of $\lambda/\mu$ and $\alpha/\beta$ that tell us whether or not $s_{\lambda/\mu} - s_{\alpha/\beta}$ is Schur-positive, and to discover operations that can be performed on skew shapes that result in Schur-positive differences.

The 21st century has seen a surge in attention given to quasisymmetric functions.  Many of the quasisymmetric results have been motivated by related results in the symmetric setting, and our goal is to begin to address the quasisymmetric analogue of the question of Schur-positivity of $s_{\lambda/\mu} - s_{\alpha/\beta}$.  Given the large amount of work done on this symmetric question,  it is reasonable to expect there to be much scope in the quasisymmetric analogue, and we do not aim to be comprehensive.  One of our hopes is that this paper will serve as a starting point for further advances.  

The description of our quasisymmetric analogue begins with labeled posets $(P,\om)$, which are a broad but natural generalization of skew diagrams $\lambda/\mu$.  Under this extension, semistandard Young tableaux of shape $\lambda/\mu$ are generalized to $(P,\om)$-partitions, as first defined in \cite{StaThesis71,StaThesis}.  Moreover, the skew Schur function $s_{\lambda/\mu}$ is generalized to the generating function $\kpw$ for $(P,\om)$-partitions which, consistent with the literature, we will call the \emph{$(P,\om)$-partition enumerator}.  Example~\ref{exa:skew} below demonstrates that $s_{\lambda/\mu}$ is indeed a special case of $\kpw$.  As we will see, $\kpw$ is a quasisymmetric function;  see \cite{Ges84} and \cite[Sec.~ 7.19]{ec2} for further information about $\kpw$.

The ``right''  analogue of Schur-positivity is not as obvious.  Compelling quasisymmetric analogues of Schur functions include the quasisymmetric Schur functions of Haglund et al.\,\cite{HLMvW11,LMvW13} and the dual immaculate basis of Berg et al.\,\cite{BBSSZ14}. 
However, we choose to focus on positivity in Gessel's fundamental basis $F_\alpha$ for two main reasons.  The first is the appeal and simplicity of the expansion of $\kpw$ in the $F$-basis, which is Theorem~\ref{thm:kexpansion} below due to Gessel and Stanley; it can be viewed as a quasisymmetric analogue of \eqref{equ:lr}.  Secondly, the $F$-basis has shown its relevance in a wide variety of settings.  These include understanding equality of skew Schur functions \cite{McN14}, obtaining a positive expansion for genomic Schur functions \cite{Pec20} which are not Schur-positive in general, and in the realm of Macdonald polynomials \cite{CaMe18, Hag04, HHL05, HHLRU05}.  The $F_\alpha$ also have a representation-theoretic link: they are the quasisymmetric characteristics of the irreducible representations of the 0-Hecke algebra \cite{DHT02, DKLT96, KrTh97, Nor79}.  As one of the two original bases (the other being the monomial basis) from Gessel's introduction of quasisymmetric functions \cite{Ges84}, the $F$-basis has stood the test of time and has earned its name of ``the fundamental basis.''

Our goal, therefore, is to determine conditions on labeled posets $(P,\om)$ and $(Q,\tau)$ for $\kqt - \kpw$ to be $F$-positive, i.e., has only non-negative coefficients when expanded in the $F$-basis.  In this case, we write $(P,\om) \leqF (Q,\tau)$ and refer to $\leqF$ as the \emph{$F$-positivity order}.  An example of this relation is given in Figure~\ref{fig:first_example}.  
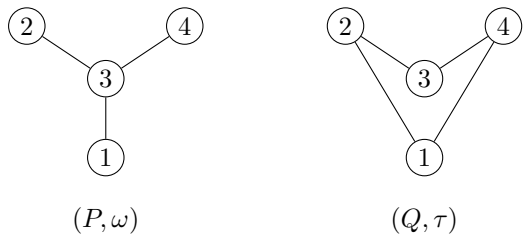
\begin{figure}
\begin{center}
\begin{tikzpicture}[scale=0.7]
\begin{scope}
\begin{scope}
\tikzstyle{every node}=[draw, shape=circle, inner sep=2pt]; 
\draw (0,0) node (a1) {1};
\draw (0,1.5) node (a3) {3};
\draw (-1.5,2.5) node (a2) {2};
\draw (1.5,2.5) node (a4) {4};
\draw (a1) -- (a3) -- (a4);
\draw (a3) -- (a2);
\end{scope}
\draw (0,-1.2) node {$(P,\om)$};
\end{scope}
\begin{scope}[xshift=40ex]
\begin{scope}
\tikzstyle{every node}=[draw, shape=circle, inner sep=2pt]; 
\draw (0,0) node (a1) {1};
\draw (0,1.5) node (a3) {3};
\draw (-1.5,2.5) node (a2) {2};
\draw (1.5,2.5) node (a4) {4};
\draw (a2) -- (a1) -- (a4) -- (a3);
\draw (a3) -- (a2);
\end{scope}
\draw (0,-1.2) node {$(Q,\tau)$};
\end{scope}
\end{tikzpicture}
\caption{An example of $(P,\om) \leqF (Q,\tau)$.}
\label{fig:first_example}
\end{center}
\end{figure}
We need to be a little careful since $\leqF$ is not a partial order as just introduced since there can be distinct labeled posets that have the same $(P,\om)$-partition enumerator.  The question of determining when $\kpw = \kqt$ was first considered in \cite{McWa14} with important further advances in each of \cite{Fer15, HaTs17,  LiWe20a, LiWe20b}.  So when we write $(P,\om) \leqF (Q,\tau)$, we are really considering $(P,\om)$ and $(Q,\tau)$ as representatives of their equivalence classes, where the equivalence relation is equality of $(P,\om)$-partition enumerators.  Prior to the present paper, the $F$-positivity order on labeled posets $(P,\om)$ was considered only in two papers.  Lam and Pylyavskyy's \cite{LaPy08} interest was in comparing products of $(P,\om)$-partition enumerators; in our language, this means that all their $(P,\om)$ have at least two connected components.  The second author \cite{McN14} previously considered skew-diagram labeled posets, as defined in Example~\ref{exa:skew} below. 

Since the corresponding Schur-positivity question from the first paragraph above remains wide open, it is unsurprising that we do not offer simple conditions that are both necessary and sufficient for $\kpw \leqF \kqt$.  Instead, after giving preliminaries in Section~\ref{sec:prelim}, we provide necessary conditions on $(P,\om)$ and $(Q,\tau)$ for $\kpw \leqF \kqt$ in Section~\ref{sec:nec}.  These are centered on the ideas of the jump sequence and Greene shape of a labeled poset.  It turns out that many of our results are optimally stated using variations of $F$-positivity such as $M$-positivity, where $M$ denotes the monomial basis, and containment of $F$-supports; see Subsection~\ref{sub:implications} for the full set of variations we use along with the relationships among them. In Section~\ref{sec:hasse}, we consider operations that can be performed on the Hasse diagram of a labeled poset $(P,\om)$ to produce a new labeled poset $(Q,\tau)$ that is larger in the $F$-positivity order.  We show that these operations are exactly those that result in the set of linear extensions of $(P,\om)$ being contained in that of $(Q,\tau)$. In Section~\ref{sec:ur}, we consider the operation of \emph{poset assembly}, which is called the \emph{Ur-operation} in \cite{BHK18+,BHK18}, and includes disjoint union, ordinal sum, and composition of posets as special cases.  We show that poset assembly does not preserve $F$-positivity in the most general possible sense, but we can get substantial results by strengthening our hypotheses.  In Section~\ref{sec:greene}, we consider two classes of posets for which we can give a simple condition that is both necessary and sufficient for $\kpw \leqF \kqt$ and for variations of this inequality.  Both classes involve a maximal chain with all other edges having one endpoint on this maximal chain. We conclude in Section~\ref{sec:conclusion} with a range of open problems.

\subsection*{Acknowledgements} 
We are grateful to Christophe Reutenauer for asking the second author about equality of $(P,\om)$-partition enumerators, of which this project is an outgrowth.  We thank the anonymous referee for helpful comments, including a reminder of the connection to the 0-Hecke algebra.  This paper is based on the first author's undergraduate honors thesis at Bucknell University where his research was funded by the Hoover Math Scholarship and the Department of Mathematics.  Portions of this paper were written while the second author was on sabbatical at Universit\'e de Bordeaux; he thanks LaBRI for its hospitality. We are grateful to Doriann Albertin, Jean-Christophe Aval and Hugo Mlodecki for pointing out the error in Section 6.2 of \cite{LeMc22}, which resulted in the correction \cite{LeMc22_correction}; the present version is a self-contained corrected version of \cite{LeMc22}.  Computations were performed using SageMath \cite{SageMath}.

\section{Preliminaries}\label{sec:prelim}

In this section, we give the necessary background on labeled posets, $(P,\om)$-partitions, $\kpw$, quasisymmetric functions, and $F$-positivity; more details can be found in \cite{Ges84} and \cite[Sec.~7.19]{ec2}.

\subsection{Labeled posets and \texorpdfstring{$(P,\om)$}{(P,w)}-partitions}  
Let $[n]$ denote the set $\{1, \ldots, n\}$ and let $1^n$ denote a sequence of $n$ copies of $1$.  The order relation on the poset $P$ will be denoted $\leq_P$, while $\leq$ will denote the usual order on the positive integers.  A \emph{labeling} of a poset $P$ is a bijection $\om : P \to [\vert P\vert ]$, where $\vert P\vert $ denotes the cardinality of $P$; all our posets will be finite.  A \emph{labeled poset} $(P,\om)$ is then a poset $P$ with an associated labeling $\om$.  

\begin{definition}\label{def:popartition}
For a labeled poset $(P,\om)$, a \emph{$(P,\om)$-partition} is a map $\popf$ from $P$ to the positive integers satisfying the following two conditions:
\begin{itemize}
\item if $a <_P b$, then $\popf(a) \leq \popf(b)$, i.e., $\popf$ is order-preserving;
\item if $a <_P b$ and $\om(a) > \om(b)$, then $\popf(a) < \popf(b)$.
\end{itemize}
\end{definition}
In other words, a $(P,\om)$-partition is an order-preserving map from $P$ to the positive integers with certain strictness conditions determined by $\om$.  
Examples of $(P,\om)$-partitions are given in Figure~\ref{fig:popartitions}, where the images under $\popf$ are written in bold next to the nodes.  The meaning of the double edges in the figure follows from the following observation about Definition~\ref{def:popartition}.  For $a, b \in P$, we say that $a$ is \emph{covered} by $b$ in $P$, denoted $a \pcovered b$, if $a <_P b$ and there does not exist $c$ in $P$ such that $a <_P c <_P b$.  Note that a definition equivalent to Definition~\ref{def:popartition} is obtained by replacing both appearances of the relation $a <_P b$ with the relation $a \pcovered b$.  In other words, we require that $\popf$ be order-preserving along the edges of the Hasse diagram of $P$, with $\popf(a) < \popf(b)$ when the edge $a \pcovered b$ satisfies $\om(a) > \om(b)$.  With this in mind, we will consider those edges $a \pcovered b$ with $\om(a) > \om(b)$ as \emph{strict edges} and we will represent them in Hasse diagrams by double lines.  Similarly, edges $a \pcovered b$ with $\om(a) < \om(b)$ will be called \emph{weak edges} and will be represented by single lines.

From the point-of-view of $(P,\om)$-partitions, the labeling $\om$ only determines which edges are strict and which are weak.  Therefore, many of our figures from this point on will not show the labeling $\om$, but instead show some collection of strict and weak edges determined by an underlying $\om$.  Furthermore, it will make many of our explanations simpler if we think of $\om$ as an assignment of strict and weak edges, rather than as a labeling of the elements of $P$, especially in the later sections. For example, we say that labeled posets $(P,\om)$ and $(Q,\tau)$ are \emph{isomorphic}, written $(P,\om) \cong (Q,\tau)$, if there exists a poset isomorphism from $P$ to $Q$ that sends strict (respectively weak) edges to strict (resp.\ weak) edges.  We will be careful to refer to the underlying labels when necessary.  
\begin{figure}[htbp]
\begin{center}
\begin{tikzpicture}[scale=0.7]
\begin{scope}
\tikzstyle{every node}=[shape=circle, inner sep=2pt]; 
\draw (0,0) node[draw] (a1) {1} +(0.6,-0.3) node {\textcolor{blue}{\textbf{3}}};
\draw (0,1.5) node[draw] (a3) {3} +(0.0,0.65) node {\textcolor{blue}{\textbf{3}}};
\draw (-1.5,2.5) node[draw] (a2) {2} +(-0.6,-0.3) node {\textcolor{blue}{\textbf{4}}};
\draw (1.5,2.5) node[draw] (a4) {4} +(0.6,-0.3) node {\textcolor{blue}{\textbf{7}}};
\draw (a1) -- (a3) -- (a4);
\draw[double distance=2pt] (a3) -- (a2);
\draw (0,-1.5) node {(a)};
\end{scope}
\begin{scope}[xshift=40ex]
\tikzstyle{every node}=[shape=circle, inner sep=2pt]; 
\draw (0,0) node[draw] (a1) {1} +(0.6,-0.3) node {\textcolor{blue}{\textbf{4}}};
\draw (0,1.5) node[draw] (a3) {3} +(0.0,0.65) node {\textcolor{blue}{\textbf{3}}};
\draw (-1.5,2.5) node[draw] (a2) {2} +(-0.6,-0.3) node {\textcolor{blue}{\textbf{4}}};
\draw (1.5,2.5) node[draw] (a4) {4} +(0.6,-0.3) node {\textcolor{blue}{\textbf{7}}};
\draw (a2) -- (a1) -- (a4) -- (a3);
\draw[double distance=2pt] (a3) -- (a2);
\draw (0,-1.5) node {(b)};
\end{scope}
\end{tikzpicture}
\caption{Two examples of $(P,\om)$-partitions.}
\label{fig:popartitions}
\end{center}
\end{figure}
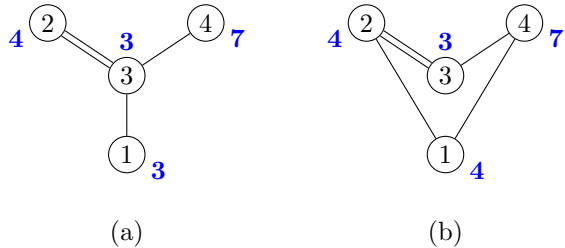

If all the edges are weak, then $P$ is said to be \emph{naturally labeled}.  In this case, we typically omit reference to the labeling and so a $(P,\om)$-partition is then traditionally called a $P$-partition (although sometimes the term ``$P$-partition'' is used informally as an abbreviation for ``$(P,\om)$-partition,'' as in the title of this paper).  Note that if $P$ is a naturally labeled chain, then a $P$-partition gives a partition of an integer, while if $P$ is an antichain, a $P$-partition gives a composition of an integer; interpolating between partitions and compositions was a motivation for Stanley's definition of $(P,\om)$-partitions \cite{StaThesis71, StaThesis}.

\subsection{The \texorpdfstring{$(P,\om)$}{(P,w)}-partition enumerator}

Using $x$ to denote the sequence of variables $x_1, x_2, \ldots$, we can now define our main object of study.

\begin{definition}\label{def:kpo}
For a labeled poset $(P,\om)$, we define the \emph{$(P,\om)$-partition enumerator} $\kpw = \kpw(x)$ by
\[
\kpw(x) = \sum_{(P,\om)\textnormal{-partition }f} x_1^{\vert \popf^{-1}(1)\vert } x_2^{\vert \popf^{-1}(2)\vert } \cdots,
\]
where the sum is over all $(P,\om)$-partitions $\popf$.
\end{definition}

For example, the $(P,\om)$-partition in Figure~\ref{fig:popartitions}(a) would contribute the monomial $x_3^2 x_4 x_7$ to its $\kpw$.  As another simple example, if $(P,\om)$ is a naturally labeled chain with three elements, then $\kpw = \sum_{i \leq j \leq k} x_i x_j x_k$.

\begin{example}\label{exa:skew}
Given a skew diagram $\lambda/\mu$ in French notation with $n$ cells, bijectively label the cells with the numbers $[n]$ in any way that makes the labels increase down columns and from left-to-right along rows, as in Figure~\ref{fig:skewschur}(a).   Rotating the result 45$^\circ$ in a counter-clockwise direction and replacing the cells by nodes as in Figure~\ref{fig:skewschur}(b), we get a corresponding labeled poset which we denote by $(\sd{\lambda/\mu}, \om_{\lambda/\mu})$ and call a \emph{skew-diagram labeled poset}.  Under this construction, we see that a $(\sd{\lambda/\mu}, \om_{\lambda/\mu})$-partition corresponds exactly to a semistandard Young tableau of shape $\lambda/\mu$. Therefore $\gen{\sd{\lambda/\mu}, \om_{\lambda/\mu}}$ is exactly the skew Schur function $s_{\lambda/\mu}$,
justifying the claim in the introduction that the study of $\kqt -\kpw$  is a generalization of the study of $s_{\lambda/\mu} - s_{\alpha/\beta}$.
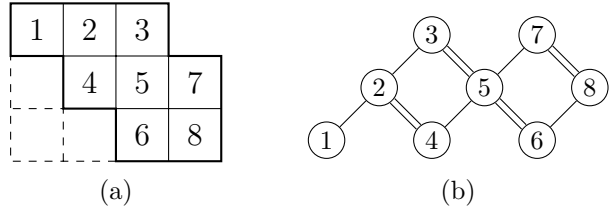
\begin{figure}[htbp]
\begin{center}
\begin{tikzpicture}[scale=0.7]
\begin{scope}[yshift=-0.4cm]
\draw[thick] (0,3) -- (3,3) -- (3,2) -- (4,2) -- (4,0) -- (2,0) -- (2,1) -- (1,1) -- (1,2) -- (0,2) -- cycle;
\draw (1,2) -- (3,2) 
(2,1) -- (4,1) 
(1,2) -- (1,3) 
(2,1) -- (2,3) 
(3,0) -- (3,2);
\draw[dashed] (2,0) -- (0,0) -- (0,2)
(1,0) -- (1,1) -- (0,1);
\begin{scope}[font = \Large]
\draw (0.5,2.5) node {1};
\draw (1.5,2.5) node {2};
\draw (2.5,2.5) node {3};
\draw (1.5,1.5) node {4};
\draw (2.5,1.5) node {5};
\draw (2.5,0.5) node {6};
\draw (3.5,1.5) node {7};
\draw (3.5,0.5) node {8};
\end{scope}
\end{scope}
\draw (2,-1) node {(a)};
\begin{scope}[xshift=6cm]
\begin{scope} 
\tikzstyle{every node}=[draw, shape=circle, inner sep=2pt]; 
\draw (0,0) node (a1) {1};
\draw (1,1) node (a2) {2};
\draw (2,2) node (a3) {3};
\draw (2,0) node (a4) {4};
\draw (3,1) node (a5) {5};
\draw (4,0) node (a6) {6};
\draw (4,2) node (a7) {7};
\draw (5,1) node (a8) {8};
\draw[double distance=2pt] (a4) -- (a2)
(a6) -- (a5) -- (a3)
(a8) -- (a7);
\draw (a1) -- (a2) --(a3)
(a4) -- (a5) -- (a7)
(a6) -- (a8); 
\end{scope}
\draw (2.5,-1) node {(b)};
\end{scope}
\end{tikzpicture}
\caption{The skew diagram $443/21$ and a corresponding labeled poset.}
\label{fig:skewschur}
\end{center}
\end{figure}
\end{example}

\subsection{Quasisymmetric functions}

It follows directly from the definition of quasisymmetric functions below that $\kpw$ is quasisymmetric.  In fact, $\kpw$ served as motivation for Gessel's original definition \cite{Ges84} of quasisymmetric functions.   

We will make use of both of the classical bases for the vector space of quasisymmetric functions.  If $\alpha = (\alpha_1, \alpha_2, \ldots, \alpha_k)$ is a composition of $n$, then we define the \emph{monomial quasisymmetric function} $M_\alpha$ by
\[
M_\alpha = \sum_{i_1 < i_2 < \ldots < i_k} x_{i_1}^{\alpha_1} x_{i_2}^{\alpha_2} \cdots x_{i_k}^{\alpha_k}.
\]
This monomial basis is natural enough that we can see directly from Definition~\ref{def:kpo} how to expand $\kpw$ in this basis.

\begin{proposition}\label{pro:mexpansion}
For a labeled poset $(P,\om)$ and a composition $\alpha = (\alpha_1, \ldots, \alpha_k)$, the coefficient of $M_\alpha$ in $\kpw$ will be the number of $(P,\om)$-partitions $\popf$ such that $\vert \popf^{-1}(1)\vert  = \alpha_1$,\ldots, $\vert \popf^{-1}(k)\vert  = \alpha_k$.  
\end{proposition}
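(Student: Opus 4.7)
The plan is to reduce the proposition to the quasisymmetry of $\kpw$ together with bookkeeping of exponents.

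First I would unwind Definition~\ref{def:kpo}: for each $(P,\om)$-partition $\popf$, the exponent of $x_j$ in its monomial is $|\popf^{-1}(j)|$, and these exponents sum to $|P|$. Grouping the $(P,\om)$-partitions $\popf$ according to the composition $\beta(\popf)$ obtained by recording the nonzero exponents in order, each $\popf$ contributes a single monomial of the form $x_{i_1}^{\beta_1} x_{i_2}^{\beta_2} \cdots x_{i_r}^{\beta_r}$ for some $i_1 < i_2 < \cdots < i_r$. So the coefficient of a particular monomial $x_{i_1}^{\alpha_1} \cdots x_{i_k}^{\alpha_k}$ with $i_1 < \cdots < i_k$ in $\kpw$ is the number of $\popf$ with $|\popf^{-1}(i_j)| = \alpha_j$ for each $j$ and $|\popf^{-1}(m)| = 0$ for $m \notin \{i_1, \ldots, i_k\}$.

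Second, I would establish the key bijection showing that this coefficient does not depend on the choice of $i_1 < \cdots < i_k$. Given the strictly increasing map $\iota\colon \{1,\dots,k\} \to \{i_1,\dots,i_k\}$ sending $j \mapsto i_j$, composing any $\popf$ valued in $\{1,\ldots,k\}$ with $\iota$ yields a function $\iota\circ\popf$ valued in $\{i_1,\ldots,i_k\}$ with the same level-set sizes. Because $\iota$ is strictly increasing, the order-preserving property and every strictness condition at a strict edge are preserved in both directions; thus $\popf \mapsto \iota\circ\popf$ is a bijection between the two sets of $(P,\om)$-partitions. This is the only point that requires any verification, and it is essentially immediate from Definition~\ref{def:popartition}.

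Finally, I would combine these. Taking $(i_1,\ldots,i_k) = (1,2,\ldots,k)$, the coefficient of $x_1^{\alpha_1} x_2^{\alpha_2} \cdots x_k^{\alpha_k}$ is the number of $(P,\om)$-partitions $\popf$ with $|\popf^{-1}(j)| = \alpha_j$ for $j = 1,\ldots,k$ (the condition $|\popf^{-1}(m)|=0$ for $m > k$ being automatic since $\sum \alpha_j = |P|$). By the bijection this equals the coefficient of every $x_{i_1}^{\alpha_1}\cdots x_{i_k}^{\alpha_k}$ with $i_1 < \cdots < i_k$, so these monomials all appear with the same coefficient in $\kpw$ and collectively assemble into the claimed multiple of $M_\alpha$. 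Since the $M_\alpha$ form a basis for quasisymmetric functions, this identifies the coefficient of $M_\alpha$ uniquely. There is no real obstacle here; the content of the argument is exactly the bijection in the second step, which simultaneously proves that $\kpw$ is quasisymmetric and yields the formula.
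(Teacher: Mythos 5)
Your proof is correct, and it fills in exactly the argument the paper leaves implicit: the paper states Proposition~\ref{pro:mexpansion} without proof, remarking only that it ``can be seen directly'' from Definition~\ref{def:kpo}. The substantive step you supply --- the bijection $\popf \mapsto \iota \circ \popf$ for a strictly increasing $\iota$, which shows the coefficient of $x_{i_1}^{\alpha_1}\cdots x_{i_k}^{\alpha_k}$ is independent of the choice of $i_1<\cdots<i_k$ and hence that $\kpw$ is quasisymmetric --- is the standard one and is precisely what justifies speaking of ``the coefficient of $M_\alpha$.'' One minor remark: your parenthetical that $|\popf^{-1}(m)|=0$ for $m>k$ is ``automatic since $\sum\alpha_j=|P|$'' tacitly assumes $\alpha$ is a composition of $|P|$; when it is not, both sides of the claimed equality are zero (the left because $\kpw$ is homogeneous of degree $|P|$, the right because the level sets of any $\popf$ partition $P$), so the statement still holds, but it would be cleaner to note this degenerate case explicitly.
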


As we know, compositions $\alpha = (\alpha_1, \alpha_2, \ldots, \alpha_k)$ of $n$ are in bijection with subsets of $[n-1]$, and let $S(\alpha)$ denote the set
$\{\alpha_1, \alpha_1+\alpha_2, \ldots, \alpha_1+\alpha_2 + \cdots+ \alpha_{k-1}\}$.  It will be helpful to sometimes denote $M_\alpha$ by $M_{S(\alpha), n}$.  Notice that these two notations are distinguished by the latter one including the subscript $n$; the subscript is helpful since $S(\alpha)$ does not uniquely determine $n$.  

The second classical basis is composed of the \emph{fundamental quasisymmetric functions} $F_\alpha$ defined by 
\begin{equation}\label{equ:F2M}
F_\alpha = F_{S(\alpha), n} = \sum_{S(\alpha) \subseteq T  \subseteq [n-1]} M_{T,n}\,.
\end{equation}
The relevance of this latter basis to $\kpw$ is due to Theorem~\ref{thm:kexpansion} below, which first appeared in \cite{StaThesis71,StaThesis} and, in the language of quasisymmetric functions, in \cite{Ges84}.  

Every permutation $\pi \in S_n$ has a descent set $\des(\pi)$ given by $\{i \in [n-1] : \pi(i) > \pi(i+1)\}$, and we will call the corresponding composition of $n$ the \emph{descent composition} of $\pi$.  Let $\L(P,\om)$ denote the set of all linear extensions of $P$, regarded as permutations of the $\om$-labels of $P$.  For example, for the labeled poset in Figure~\ref{fig:popartitions}(b), $\L(P,\om) = \{1324, 1342, 3124, 3142\}$.  

\begin{theorem}[\cite{Ges84,StaThesis71,StaThesis}]\label{thm:kexpansion}
Let $(P,\om)$ be a labeled poset with $\vert P\vert =n$.  Then
\[
\kpw = \sum_{\pi \in \L(P,\om)} F_{\des(\pi), n}\,.
\]
\end{theorem}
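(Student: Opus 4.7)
The plan is to partition the set of all $(P,\om)$-partitions according to a canonically associated linear extension, and to recognize the contribution from each linear extension as a single fundamental quasisymmetric function. This is the classical ``fundamental lemma of $(P,\om)$-partitions,'' and the proof proceeds via an explicit bijection.

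First I would define, for each $(P,\om)$-partition $\popf$, a permutation $\pi_\popf$ by listing the elements of $P$ in the order obtained by sorting primarily by increasing $\popf$-value, breaking ties by increasing $\om$-label. Writing this list as $a_1,a_2,\ldots,a_n$, I view $\pi_\popf$ as the permutation $\om(a_1)\om(a_2)\cdots\om(a_n)$. The first substantive step is to check that $\pi_\popf\in\L(P,\om)$: if $a<_P b$ is a weak edge (so $\om(a)<\om(b)$), then $\popf(a)\leq\popf(b)$ forces $a$ to precede $b$ in the sorted list (directly when $\popf(a)<\popf(b)$, via the tie-breaking rule when $\popf(a)=\popf(b)$); if $a<_P b$ is a strict edge, Definition~\ref{def:popartition} gives $\popf(a)<\popf(b)$, so again $a$ precedes $b$.

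Next I would establish that the assignment $\popf\mapsto(\pi_\popf,\popf)$ is a bijection onto pairs $(\pi,\popf)$ where $\pi\in\L(P,\om)$ and $\popf\colon P\to\mathbb{Z}_{>0}$ is any map whose values, read in the order prescribed by $\pi$, form a sequence $y_1\leq y_2\leq\cdots\leq y_n$ satisfying $y_i<y_{i+1}$ for every $i\in\des(\pi)$. The tie-breaking convention handles the forward direction: if $i\in\des(\pi_\popf)$ then $\om(a_i)>\om(a_{i+1})$, so $y_i=y_{i+1}$ is impossible, forcing $y_i<y_{i+1}$. The reverse direction is the main obstacle, and the delicate step is to verify that such a pair recovers a genuine $(P,\om)$-partition. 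If $a<_P b$, then $a$ precedes $b$ in $\pi$, yielding $\popf(a)\leq\popf(b)$; if additionally $\om(a)>\om(b)$, then the subsequence of $\pi$-values drops from $\om(a)$ down to $\om(b)$, so $\pi$ must have a descent strictly between the positions of $a$ and $b$, and this descent forces at least one strict inequality among the intervening $y_i$'s, giving $\popf(a)<\popf(b)$.

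Finally, grouping the monomials of $\kpw$ according to the bijection, the contribution of a fixed $\pi\in\L(P,\om)$ is
\[
\sum_{\substack{1\leq y_1\leq\cdots\leq y_n \\ y_i<y_{i+1}\text{ if } i\in\des(\pi)}} x_{y_1}x_{y_2}\cdots x_{y_n}.
\]
A short calculation starting from the definition of $M_\alpha$ and from \eqref{equ:F2M} identifies this sum as $F_{\des(\pi),n}$: the inner sums over sequences with a prescribed set of strict inequalities refine into $M$-monomials indexed precisely by the supersets of $\des(\pi)$ in $[n-1]$. Summing over $\pi\in\L(P,\om)$ yields the claimed $F$-expansion.
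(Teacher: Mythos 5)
Your proof is correct and is the standard argument for this classical result, which the paper simply cites to Gessel and Stanley rather than reproving; the decomposition of the set of $(P,\om)$-partitions by the sort-then-tie-break linear extension, followed by recognizing each block's generating function as $F_{\des(\pi),n}$ via the $M$-expansion, is precisely the ``fundamental lemma of $(P,\om)$-partitions.''

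One small point worth tidying: for $\popf \mapsto (\pi_\popf,\popf)$ to be a bijection onto your target set, surjectivity requires not only that the $\popf$ in a given admissible pair $(\pi,\popf)$ is a genuine $(P,\om)$-partition (which you check carefully) but also that $\pi_\popf=\pi$; otherwise the pair need not lie in the image. This is quick to add: within any maximal run of equal $\popf$-values read along $\pi$, the consecutive positions are non-descents of $\pi$, so the $\om$-labels increase; hence sorting by $\popf$-value with ties broken by $\om$-label reproduces $\pi$ exactly. A cosmetic remark: you phrase the first verification in terms of ``weak'' and ``strict'' edges, but you actually apply Definition~\ref{def:popartition} to arbitrary comparable pairs $a<_Pb$, not only covers. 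That is fine (the definition holds for all such pairs), just note the terminology.
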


\begin{example}\label{exa:kexpansion}
The labeled poset $(P,\om)$ of Figure~\ref{fig:popartitions}(a) has $\L(P,\om) = \{1324, 1342\}$ and hence
\begin{align*}
\gen{P, \om} &= F_{\{2\},4} + F_{\{3\},4} \\
&=  F_{22} + F_{31} \\ 
&=  M_{\{2\},4} + M_{\{3\},4} + M_{\{1,2\},4} + M_{\{1,3\},4} + 2M_{\{2,3\},4} + 2M_{\{1,2,3\},4}\\
&=  M_{22} + M_{31} + M_{112}  + M_{121} + 2M_{211} + 2M_{1111}.
\end{align*}
\end{example}

\subsection{Positivity implications}\label{sub:implications}

Although our introduction focused on $\kqt - \kpw$ being $F$-positive, there are some stronger and weaker conditions that are important for us.  First, referring to Theorem~\ref{thm:kexpansion}, a sufficient condition for $(P,\om) \leqF (Q,\tau)$ is that $\L(P,\om) \subseteq \L(Q,\tau)$.  We will investigate this latter condition in detail in Section~\ref{sec:hasse}, and some of our results in Section~\ref{sec:ur} will need us to strengthen our hypotheses by using this linear extension containment.  

As follows from~\eqref{equ:F2M}, a weaker condition than $\kqt - \kpw$ being $F$-positive is that $\kqt - \kpw$ is $M$-positive, which we denote by $(P,\om) \leqM (Q,\tau)$.  Moreover, the \emph{$F$-support} of $(P,\om)$, denoted $\Fsupp(P,\om)$, is the set of compositions $\alpha$ such that $F_\alpha$ appears with nonzero (and hence positive) coefficient in the $F$-expansion of $\kpw$.  We define the \emph{$M$-support} $\Msupp(P,\om)$ analogously.  For example, the $M$-support of the poset from Example~\ref{exa:kexpansion} is $\{22, 31, 112, 121, 211, 1111\}$.   By definition of these supports and \eqref{equ:F2M}, we get the following set of implications.
\begin{equation}\label{equ:implications}
\begin{tikzpicture}[scale=0.8]
\tikzstyle{every node}=[draw, inner sep=3pt]; 
\node at (0,3) {$\L(P,\om) \subseteq \L(Q,\tau)$};
\node at (0,1.5) {$(P,\om) \leqF (Q,\tau)$};
\node at (0,0) {$(P,\om) \leqM (Q,\tau)$};
\node at (5.5,1.5) {$\Fsupp(P,\om) \subseteq \Fsupp(Q,\tau)$};
\node at (5.5,0) {$\Msupp(P,\om) \subseteq \Msupp(Q,\tau)$};
\tikzstyle{every node}=[]; 
\node at (2.25,0) {$\Rightarrow$};
\node at (2.25,1.5) {$\Rightarrow$};
\node at (0,0.75) {$\Downarrow$};
\node at (5.5,0.75) {$\Downarrow$};
\node at (0,2.25) {$\Downarrow$};
\end{tikzpicture}
\end{equation}
Some of our necessary conditions for $(P,\om) \leqF (Q,\tau)$ in the next section will merely require us to assume that $\Msupp(P,\om) \subseteq \Msupp(Q,\tau)$; we refer to the latter condition as ``$M$-support containment.''

\subsection{Involutions on labeled posets}

For the last preliminary, we consider some natural involutions we can perform on a labeled poset $(P,\om)$. Subsets of the set of these involutions appear previously in, for example, \cite{Ehr96,Ges90,LaPy08,MalThesis, MaRe95, MaRe98} and \cite[Exer.~7.94(a)]{ec2}.  First, we can switch strict and weak edges, denoting the result $\switch{P,\om}$. Secondly, we can rotate the labeled poset 180$^\circ$, preserving strictness and weakness of edges; we denote the resulting labeled poset $\rotatep{P,\om}$.  Observe that these so-called \emph{bar} and \emph{star involutions} commute; an example is given in Figure~\ref{fig:involutions}.    

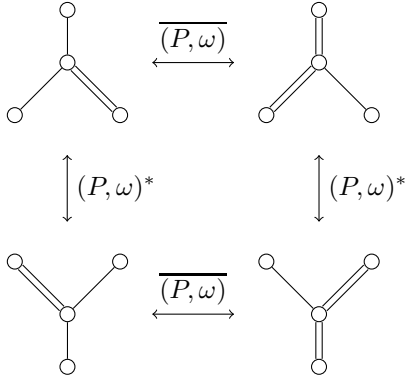
\begin{figure}[htbp]
\begin{center}
\begin{tikzpicture}[scale=1.0]
\matrix [row sep=15mm, column sep=15mm]  {
\node(NW) {
\begin{tikzpicture}[scale=0.7]
\tikzstyle{every node}=[draw, shape=circle, inner sep=2pt]; 
\draw (0,0) node (a1) {};
\draw (2,0) node (a2) {};
\draw (1,1) node (a3) {};
\draw (1,2) node (a4) {};
\draw (a1) -- (a3)
(a3) -- (a4);
\draw[double distance=2pt] (a2) -- (a3);
\end{tikzpicture}
};
&
\node(NE) {
\begin{tikzpicture}[scale=0.7]
\tikzstyle{every node}=[draw, shape=circle, inner sep=2pt]; 
\draw (0,0) node (b1) {};
\draw (2,0) node (b2) {};
\draw (1,1) node (b3) {};
\draw (1,2) node (b4) {};
\draw[double distance=2pt] (b1) -- (b3)
(b3) -- (b4);
\draw (b2) -- (b3); 
\end{tikzpicture}
};
\\
\node(SW) {
\begin{tikzpicture}[scale=0.7]
\tikzstyle{every node}=[draw, shape=circle, inner sep=2pt]; 
\draw (1,0) node (c1) {};
\draw (1,1) node (c2) {};
\draw (0,2) node (c3) {};
\draw (2,2) node (c4) {};
\draw (c1) -- (c2)
(c2) -- (c4);
\draw[double distance=2pt](c2) -- (c3);
\end{tikzpicture}
};
&
\node(SE) {
\begin{tikzpicture}[scale=0.7]
\tikzstyle{every node}=[draw, shape=circle, inner sep=2pt]; 
\draw (1,0) node (d1) {};
\draw (1,1) node (d2) {};
\draw (0,2) node (d3) {};
\draw (2,2) node (d4) {};
\draw (d2) -- (d3);
\draw[double distance=2pt](d1) -- (d2)
(d2) -- (d4);
\end{tikzpicture}
};
\\ }; 
\draw[<->,shorten >=2mm, shorten <= 2mm] (NW) -- (NE) node [midway,auto] {$\switch{P,\om}$};
\draw[<->,shorten >=3mm, shorten <=3mm] (NW) -- (SW) node [midway,auto] {$\rotatep{P,\om}$};
\draw[<->,shorten >=3mm, shorten <=3mm] (NE) -- (SE) node [midway,auto] {$\rotatep{P,\om}$};
\draw[<->,shorten >=2mm, shorten <= 2mm] (SW) -- (SE) node [midway,auto] {$\switch{P,\om}$};
\end{tikzpicture}
\caption{The bar and star involutions.}
\label{fig:involutions}
\end{center}
\end{figure}

Although in subsequent sections we will prefer to view these bar and star involutions as described above, it will be helpful for Lemma~\ref{lem:involutions} below to formulate them in terms of their effect on the $\om$-labels.  
We see that 
\[
\switch{P,\om} \cong (P,\overline{\om}),
\]
where $\switchx{\om}$ is defined by 
\[
\switchx{\om}(a) = \vert P\vert +1-\om(a)
\]
for all $a \in P$.  Let $\rotatex{P}$ denote the 180$^\circ$ rotation (dual) of $P$, and let $\rotatex{a}$ denote the image of $a\in P$ under this rotation,  Then
\[
\rotatep{P,\om} \cong (\rotatex{P},\rotatex{\om}),
\]
where $\rotatex{\om}$ is defined by 
\[
\rotatex{\om}(\rotatex{a}) = \vert P\vert +1-\om(a).
\]

Given Theorem~\ref{thm:kexpansion}, we can determine the effect of these involutions on $\kpw$ by examining their effect on the linear extensions of $(P,\om)$.  A proof can be found in \cite{McWa14}.

\begin{lemma}\label{lem:involutions}  Let $(P,\om)$ be a labeled poset.  Then we have:
\begin{enumerate}
\item the descent sets of the linear extensions of $\switch{P,\om}$ are the complements of the descent sets of the linear extensions of $(P,\om)$;
\item the descent compositions of the linear extensions of $\rotatep{P,\om}$ are the reverses of the descent compositions of the linear extensions of $(P,\om)$.  
\end{enumerate}
\end{lemma}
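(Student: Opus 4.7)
The plan is to prove both parts by exhibiting explicit bijections between $\L(P,\om)$ and the linear-extension sets of the involuted posets, and then tracking directly how the descent sets transform.

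For part (a), observe that $\switch{P,\om}$ has the same underlying poset $P$, so the underlying linear extensions are in obvious bijection; only the labels change. Concretely, given $\pi = \pi_1\pi_2\cdots\pi_n \in \L(P,\om)$, the map $\pi \mapsto \switchx{\pi}$ defined by $\switchx{\pi}_i = n+1-\pi_i$ yields a bijection onto $\L(P,\switchx{\om})$. For any $i \in [n-1]$,
\[
i \in \des(\switchx{\pi}) \iff n+1-\pi_i > n+1-\pi_{i+1} \iff \pi_i < \pi_{i+1} \iff i \notin \des(\pi),
\]
so $\des(\switchx{\pi}) = [n-1] \setminus \des(\pi)$, which is part (a).

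For part (b), the key observation is that reversing a linear extension of $P$ yields a linear extension of the dual poset $\rotatex{P}$: if $a_1 <_P a_2 <_P \cdots <_P a_n$ is linear-extending, then $\rotatex{a_n} <_{\rotatex{P}} \rotatex{a_{n-1}} <_{\rotatex{P}} \cdots <_{\rotatex{P}} \rotatex{a_1}$ is linear-extending for $\rotatex{P}$. Under $\rotatex{\om}$, the element $\rotatex{a_i}$ receives label $n+1-\om(a_i) = n+1-\pi_i$. Thus $\pi \mapsto \rotatex{\pi}$ with $\rotatex{\pi}_i = n+1-\pi_{n+1-i}$ is a bijection $\L(P,\om) \to \L(\rotatex{P},\rotatex{\om})$. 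Computing,
\[
i \in \des(\rotatex{\pi}) \iff n+1-\pi_{n+1-i} > n+1-\pi_{n-i} \iff \pi_{n-i} > \pi_{n+1-i} \iff n-i \in \des(\pi),
\]
so $\des(\rotatex{\pi}) = \{n-d : d \in \des(\pi)\}$. Writing $\des(\pi) = \{d_1 < \cdots < d_k\}$ with associated composition $(d_1,\, d_2-d_1,\, \ldots,\, n-d_k)$, the set $\{n-d_k < \cdots < n-d_1\}$ corresponds to composition $(n-d_k,\, d_k-d_{k-1},\, \ldots,\, d_1)$, which is precisely the reverse, proving part (b).

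No serious obstacle is expected; the only thing to be careful about is notational bookkeeping, in particular distinguishing between permutations as sequences of labels and the underlying linear extensions of $P$, and verifying that the index arithmetic for the descent-set computation in part (b) correctly yields composition reversal rather than set complementation. Once the two bijections are correctly set up on the nose, both identities fall out by a one-line inequality manipulation.
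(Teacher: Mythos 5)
Your proof is correct, and it takes the natural and expected approach: exhibit the explicit bijections $\pi \mapsto \switchx{\pi}$ and $\pi \mapsto \rotatex{\pi}$ on linear extensions (the latter via reversal of the underlying sequence and dualizing the poset) and then track how the descent indices transform. The paper itself does not include a proof of this lemma but cites \cite{McWa14} for it; the argument you give is the standard one. One tiny notational nitpick: in part (b) you write ``if $a_1 <_P a_2 <_P \cdots <_P a_n$ is linear-extending,'' which reads as if consecutive terms of a linear extension were always comparable in $P$ --- they need not be. What you mean (and your index arithmetic correctly uses) is that $(a_1,\ldots,a_n)$ is a linear extension, i.e., $a_i <_P a_j$ forces $i<j$; rephrasing that clause would remove the ambiguity.
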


The usefulness of these involutions stems from the following consequence of Theorem~\ref{thm:kexpansion} and Lemma~\ref{lem:involutions}

\begin{proposition}\label{pro:fourfold}
For labeled posets $(P,\om)$ and $(Q,\tau)$, the following are equivalent:
\begin{enumerate}
\item $(P,\om) \leqF (Q,\tau)$;\medskip
\item $\switch{P,\om} \leqF \switch{Q,\tau}$;\medskip
\item $\rotatep{P,\om} \leqF \rotatep{Q,\tau}$;\medskip
\item $\switchx{\rotatep{P,\om}} \leqF \switchx{\rotatep{Q,\tau}}$.
\end{enumerate}
In addition, $F$-support containment is also preserved by the bar and star involutions.
\end{proposition}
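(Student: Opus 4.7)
The plan is to deduce the proposition directly from Theorem~\ref{thm:kexpansion} and Lemma~\ref{lem:involutions}, by translating $F$-positivity of $\kqt - \kpw$ into a multiset containment statement about descent sets of linear extensions. Since $\kpw$ is homogeneous of degree $\vert P\vert$ (and likewise for $\kqt$), none of the four conditions can hold unless $\vert P\vert = \vert Q\vert$, so we set $n = \vert P\vert = \vert Q\vert$. For a labeled poset $(R,\sigma)$ on $n$ elements, let $D(R,\sigma)$ denote the multiset $\{\des(\pi) : \pi \in \L(R,\sigma)\}$ of subsets of $[n-1]$. Theorem~\ref{thm:kexpansion} then rewrites
\[
\kqt - \kpw = \sum_{S \subseteq [n-1]} \bigl( m_{Q,\tau}(S) - m_{P,\om}(S) \bigr) F_{S,n},
\]
where $m_{R,\sigma}(S)$ is the multiplicity of $S$ in $D(R,\sigma)$. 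Hence $(P,\om) \leqF (Q,\tau)$ is equivalent to $D(P,\om) \subseteq D(Q,\tau)$ as multisets, while $\Fsupp(P,\om) \subseteq \Fsupp(Q,\tau)$ is the same containment at the level of the underlying sets.

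To obtain (a) $\Leftrightarrow$ (b), I invoke Lemma~\ref{lem:involutions}(a): the multiset $D(\switch{R,\sigma})$ is obtained from $D(R,\sigma)$ by applying the complementation map $c \colon S \mapsto [n-1] \setminus S$ elementwise. Since $c$ is an involutive bijection of $2^{[n-1]}$, the containment $D(P,\om) \subseteq D(Q,\tau)$ holds if and only if $D(\switch{P,\om}) \subseteq D(\switch{Q,\tau})$ does, and likewise at the level of sets. For (a) $\Leftrightarrow$ (c), Lemma~\ref{lem:involutions}(b) describes $D(\rotatep{R,\sigma})$ as the result of applying the reversal of the associated composition, which a short unpacking of the bijection between compositions of $n$ and subsets of $[n-1]$ identifies with the bijection $r \colon S \mapsto \{n - s : s \in S\}$ of $2^{[n-1]}$. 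The same bijective argument now gives (a) $\Leftrightarrow$ (c). Composing these two equivalences produces (a) $\Leftrightarrow$ (d).

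The claim about $F$-support containment follows word-for-word by replacing ``multiset containment'' with ``set containment'' throughout, since any bijection of $2^{[n-1]}$ preserves and reflects set inclusions. I do not anticipate any serious obstacle: all the combinatorial substance is already packaged into Theorem~\ref{thm:kexpansion} and Lemma~\ref{lem:involutions}, and the proposition amounts to the clean observation that the bijective relabelings of the $F$-basis induced by the bar and star involutions cannot alter positivity or support containment for a difference of two $(P,\om)$-partition enumerators.
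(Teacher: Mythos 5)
Your proof is correct and follows essentially the same route the paper intends: the paper states the proposition without a written proof, declaring it a "consequence of Theorem~\ref{thm:kexpansion} and Lemma~\ref{lem:involutions}," and your write-up supplies precisely the missing details—translating $F$-positivity into multiset containment of descent sets via Theorem~\ref{thm:kexpansion}, then observing from Lemma~\ref{lem:involutions} that the bar and star involutions act on $2^{[n-1]}$ by the bijections $S \mapsto [n-1]\setminus S$ and $S \mapsto \{n-s : s\in S\}$, which preserve both multiset and set containment.
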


In particular, any condition that is necessary (resp.\ sufficient) for the inequality of (a) will also be necessary (resp.\ sufficient) for the inequalities of (b), (c), and (d).

The equivalence of (a) and (b) in Proposition~\ref{pro:fourfold} does not hold if we replace ``$\leqF$'' with ``$\leqM$'' in both inequalities, nor if we replace ``$F$-support containment'' with ``$M$-support containment.''  For example, let $(P,\om)$ be a 2-element chain with a strict edge and let $(Q,\tau)$ be a 2-element chain with a weak edge.  However, we do get the following result.

\begin{proposition}\label{pro:twofold}
For labeled posets $(P,\om)$ and $(Q,\tau)$, we have $(P,\om) \leq_M (Q,\tau)$ if and only if $\rotatep{P,\om} \leqM \rotatep{Q,\tau}$.  In addition, $M$-support containment is also preserved under the star involution.
\end{proposition}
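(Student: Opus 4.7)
The plan is to use the $M$-expansion from Proposition~\ref{pro:mexpansion}, which counts $(P,\om)$-partitions with prescribed fiber sizes, and exhibit an explicit bijection between $(P,\om)$-partitions and $(\rotatex{P},\rotatex{\om})$-partitions that reverses the composition of fiber sizes. This will immediately give the coefficient identity $[M_\alpha]\kpw = [M_{\alpha^{\mathrm{rev}}}]\gen{\rotatex{P},\rotatex{\om}}$ for every composition $\alpha$, from which both claims follow.

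Concretely, fix a composition $\alpha=(\alpha_1,\dots,\alpha_k)$ of $n=|P|$. By Proposition~\ref{pro:mexpansion}, $[M_\alpha]\kpw$ counts $(P,\om)$-partitions $\popf\colon P\to [k]$ with $|\popf^{-1}(j)|=\alpha_j$ for each $j$. Define $\popf^*\colon \rotatex{P}\to [k]$ by
\[
\popf^*(\rotatex{a}) = k+1-\popf(a).
\]
Since $\rotatex{a}<_{\rotatex{P}}\rotatex{b}$ is equivalent to $b<_P a$, the map $\popf^*$ is order-preserving on $\rotatex{P}$ precisely because $\popf$ is order-preserving on $P$. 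For strictness, recall that the star involution preserves the strict/weak status of every edge. A covering relation $\rotatex{a}\pcovered\rotatex{b}$ in $\rotatex{P}$ corresponds to $b\pcovered a$ in $P$, and one checks using the definition $\rotatex{\om}(\rotatex{a})=n+1-\om(a)$ that both edges are strict or both weak simultaneously; in either case the required inequality on $\popf^*$ is equivalent to the one on $\popf$. Hence $\popf \mapsto \popf^*$ is a well-defined involution between $(P,\om)$-partitions and $(\rotatex{P},\rotatex{\om})$-partitions taking values in $[k]$. Since $|\popf^{*-1}(j)| = |\popf^{-1}(k+1-j)| = \alpha_{k+1-j}$, this bijection sends fiber composition $\alpha$ to its reverse $\alpha^{\mathrm{rev}}$.

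It follows that $[M_\alpha]\kpw = [M_{\alpha^{\mathrm{rev}}}]\gen{\rotatex{P},\rotatex{\om}}$ for every $\alpha$. Applying this to both $(P,\om)$ and $(Q,\tau)$, the inequality $[M_\alpha](\kqt-\kpw)\geq 0$ for all $\alpha$ is equivalent to $[M_{\alpha^{\mathrm{rev}}}](\gen{\rotatex{Q},\rotatex{\tau}}-\gen{\rotatex{P},\rotatex{\om}})\geq 0$ for all $\alpha$, giving the equivalence $(P,\om)\leqM(Q,\tau) \Leftrightarrow \rotatep{P,\om}\leqM\rotatep{Q,\tau}$. The same coefficient identity shows that $\Msupp(\rotatep{P,\om})=\{\alpha^{\mathrm{rev}}:\alpha\in\Msupp(P,\om)\}$, so $M$-support containment is also preserved by the star involution.

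The main thing to be careful about is the verification that strictness is preserved by the bijection; that is where the "star preserves strict/weak edges" property plays its essential role, in contrast to the bar involution, which swaps strict and weak edges and correspondingly fails to preserve $M$-positivity (as witnessed by the two-element chain example noted in the paper immediately before the statement). Apart from this bookkeeping, the proof is a direct computation.
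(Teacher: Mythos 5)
Your proof is correct and follows essentially the same route as the paper: both use Proposition~\ref{pro:mexpansion} together with the explicit bijection $\popf \mapsto \popf^*$ (the paper calls it $g$) defined by $\popf^*(\rotatex{a}) = k+1-\popf(a)$, observing that it reverses the fiber composition. The only difference is that you spell out the verification that $\popf^*$ is a well-defined $\rotatep{P,\om}$-partition (the strictness check), which the paper asserts without detail; this is a harmless and correct amplification.
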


\begin{proof}
Referring to Proposition~\ref{pro:mexpansion}, let $\popf$ be a $(P,\om)$-partition such that $\vert \popf^{-1}(1)\vert  = \alpha_1$,\ldots, $\vert \popf^{-1}(k)\vert  = \alpha_k$.  Each such $(P,\om)$-partition maps bijectively to a $\rotatep{P,\om}$-partition $g$ defined by $g(\rotatex{a}) = k+1-f(a)$, where $\rotatex{a} \in \rotatex{P}$ is the image of $a \in P$ under the 180$^\circ$ rotation.  The key property of $g$ is that $\vert g^{-1}(1)\vert  = \alpha_k$,\ldots, $\vert g^{-1}(k)\vert  = \alpha_1$\,.  By Proposition~\ref{pro:mexpansion} and letting $\rotatex{\alpha}$ denote the reversal of $\alpha$, the coefficient of any $M_\alpha$ in $\kpw$ is thus equal to the coefficient of $M_{\rotatex{\alpha}} \in \genx{\rotatep{P,\om}}$, from which the result follows.
\end{proof}

\section{Necessary conditions}\label{sec:nec}

Our ultimate goal is to determine the full set of relations of the form $(P,\om) \leqF (Q,\tau)$ in terms of simple conditions on $(P,\om)$ or $(Q,\tau)$, along with similar results for our other ways of comparing $(P,\om)$ and  $(Q,\tau)$ from~\eqref{equ:implications}.  To do so, we would need to know which inequalities do \emph{not} hold, and necessary conditions give us a way to find such instances.  In the interest of generality, we will state each of our necessary conditions using the weakest hypothesis from~\eqref{equ:implications} needed.

\subsection{Quick observations}

First, it is clearly the case that if $\Msupp(P,\om) \subseteq \Msupp(Q,\tau)$ then $\vert P\vert =\vert Q\vert $.  Our second observation is almost as simple.

\begin{proposition}
If labeled posets $(P,\om)$ and $(Q,\tau)$ satisfy $(P,\om) \leqM (Q,\tau)$, then $\vert \L(P,\om)\vert  \leq \vert \L(Q,\tau)\vert $.
\end{proposition}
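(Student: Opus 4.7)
The plan is to extract $|\L(P,\om)|$ as a single coefficient in the $M$-expansion of $\kpw$, so that $M$-positivity of the difference directly yields the desired inequality.

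First, note that if $(P,\om) \leqM (Q,\tau)$ with $\kpw \neq 0$, then $\kpw$ and $\kqt$ must have the same total degree, so $|P| = |Q| = n$. (This is essentially the ``first quick observation'' that immediately precedes the proposition.)

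Next, I would identify the coefficient of $M_{1^n}$ in $\kpw$ with $|\L(P,\om)|$. Two routes are available. The slicker one uses Theorem~\ref{thm:kexpansion}: since $M_{1^n} = M_{[n-1],n}$ and every $F_{S,n} = \sum_{S \subseteq T \subseteq [n-1]} M_{T,n}$ contains $M_{[n-1],n}$ with coefficient exactly $1$, Theorem~\ref{thm:kexpansion} gives
\[
[M_{1^n}]\,\kpw \;=\; \sum_{\pi \in \L(P,\om)} [M_{1^n}]\, F_{\des(\pi),n} \;=\; |\L(P,\om)|.
\]
Alternatively, one could invoke Proposition~\ref{pro:mexpansion}: the coefficient of $M_{1^n}$ counts $(P,\om)$-partitions $\popf$ with $|\popf^{-1}(i)|=1$ for each $i\in[n]$, i.e., bijections $\popf : P \to [n]$ satisfying the $(P,\om)$-partition conditions. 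Such bijections force strict inequalities on every cover relation (bijectivity turns the $\leq$ in the order-preserving condition into $<$, which subsumes the strict condition), and are therefore in natural bijection with linear extensions of $P$, which under the labeling $\om$ are precisely the elements of $\L(P,\om)$.

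Finally, $M$-positivity of $\kqt - \kpw$ means every coefficient in the $M$-expansion is nonnegative, so in particular
\[
[M_{1^n}]\,\kqt \;\geq\; [M_{1^n}]\,\kpw,
\]
which by the identification above is exactly $|\L(Q,\tau)| \geq |\L(P,\om)|$. There is no real obstacle here; the only mild subtlety is checking the bijection in the alternative approach, but the fundamental-basis route sidesteps even that.
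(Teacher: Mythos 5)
Your proof is correct. The paper's own argument is exactly the ``alternative'' route you sketch: it invokes Proposition~\ref{pro:mexpansion} to identify the coefficient of $M_{1^n}$ in $\kpw$ with the number of bijective $(P,\om)$-partitions $\popf: P \to [n]$, and then observes that these are precisely the linear extensions. Your primary route---passing through Theorem~\ref{thm:kexpansion} and noting that every $F_{S,n}$ contributes exactly $1$ to the coefficient of $M_{[n-1],n}$---is a mild variant that reaches the same key identity $[M_{1^n}]\,\kpw = |\L(P,\om)|$ without having to argue about bijective $(P,\om)$-partitions directly; it buys you a small savings in bookkeeping, at the cost of invoking the $F$-expansion (which the paper otherwise reserves for $F$-positivity statements). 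Both are fine; the substance is the same.
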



\begin{proof}
Let $n=\vert P\vert $.  By Proposition~\ref{pro:mexpansion}, the coefficient of $M_{1^n}$ in $\kpw$ is the number of $(P,\om)$-partitions $\popf$ that are bijections to the set $[n]$.  Considering $\popf$ as a way to order the elements of $(P,\om)$, the coefficient of $M_{1^n}$ is thus the number of linear extensions of $(P,\om)$, and the result follows.  
\end{proof}

The next result answers the question of whether a naturally labeled poset can be less than a non-naturally labeled poset.

\begin{proposition}\label{pro:natural_preserved}
If labeled posets $(P,\om)$ and $(Q,\tau)$ satisfy \newline $\Msupp(P,\om) \subseteq \Msupp(Q,\tau)$ and $(P,\om)$ has only weak edges, then so does $(Q,\tau)$.
\end{proposition}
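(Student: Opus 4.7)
The plan is to exploit the single composition $\alpha = (n)$, where $n = |P| = |Q|$ (the equality of sizes follows from the first observation of the section that $M$-support containment forces $|P|=|Q|$). By Proposition~\ref{pro:mexpansion}, the coefficient of $M_{(n)}$ in $\kpw$ counts the $(P,\om)$-partitions $f$ with $|f^{-1}(1)| = n$, i.e., those $f$ that send every element of $P$ to $1$.

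Next I would observe that the constant map $f \equiv 1$ is a $(P,\om)$-partition if and only if $(P,\om)$ has no strict edges: it is automatically order-preserving, and the only way it can fail is across a cover $a \pcovered b$ with $\om(a) > \om(b)$, which demands the strict inequality $f(a) < f(b)$. Hence $(n) \in \Msupp(P,\om)$ precisely when $(P,\om)$ is naturally labeled.

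Putting these together, since $(P,\om)$ has only weak edges we have $(n) \in \Msupp(P,\om)$, and the hypothesis $\Msupp(P,\om) \subseteq \Msupp(Q,\tau)$ then forces $(n) \in \Msupp(Q,\tau)$. Running the characterization in reverse for $(Q,\tau)$ yields that $(Q,\tau)$ admits the constant $(Q,\tau)$-partition $f \equiv 1$, which is possible only if $(Q,\tau)$ has no strict edges either. There is no genuine obstacle here; the only subtle point is recognizing that $M_{(n)}$ — the ``coarsest'' monomial — acts as an indicator for natural labeling, after which the containment hypothesis immediately transfers the property from $(P,\om)$ to $(Q,\tau)$.
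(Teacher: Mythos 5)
Your proof is correct and is essentially identical to the paper's: both observe that $(\lvert P\rvert)$ lies in the $M$-support if and only if the constant map $f\equiv 1$ is a $(P,\om)$-partition, which happens exactly when all edges are weak, and then transfer this through the $M$-support containment.
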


\begin{proof}
Observe that $(P,\om)$ has only weak edges if and only if there is a $(P,\om)$-partition that maps all elements to the number 1, meaning that $(\vert P\vert)$ is in the $M$-support of $(P,\om)$.  Thus $\vert P\vert $ is in the $M$-support of $(Q,\tau)$, and $(Q,\tau)$ has only weak edges.
\end{proof}

Of course, Proposition~\ref{pro:natural_preserved} remains true if we replace the $M$-support containment hypothesis with the stronger condition that $\Fsupp(P,\om) \subseteq \Fsupp(Q,\tau)$.  Then applying the bar involution and Proposition~\ref{pro:fourfold}, we get the following corollary: if $\Fsupp(P,\om) \subseteq \Fsupp(Q,\tau)$ and $(P,\om)$ has only strict edges, then so does $(Q,\tau)$.

\subsection{Orderings on the jump}\label{subsec:jump}

A more sophisticated tool for deriving necessary conditions is the \emph{jump sequence} of a labeled poset, as defined in \cite{McWa14}.  The jump of an element of a labeled poset is very similar to the notion of maximum descent distance from \cite{AsBe20}.

\begin{definition}
We define the \emph{jump} of an element $b$ of a labeled poset $(P,\om)$ 
by considering the number of strict edges on each of the saturated chains from $b$ down to a minimal element of $P$, and taking the maximum such number.
The \emph{jump sequence} of $(P,\om)$, denoted $\jump(P,\om)$, is 
\[
\jump(P,\om) = (j_0, \ldots, j_k),
\]
where $j_i$ is the number of elements with jump $i$, and $k$ is the maximum jump of an element of $(P,\om)$.  
\end{definition}

For example, the posets of Figure~\ref{fig:jump} both have jump sequence $(3,1)$.  There is an alternative way to think of the jump sequence in terms of the $M$-support, as we now explain.  The dominance order on partitions of $n$ is well known, and its definition extends directly to give the dominance order on compositions, denoted $\domleq$\,.
 
\begin{lemma}\label{lem:jump}
Under dominance order, the maximum element of both the $M$-support and the $F$-support of a labeled poset $(P,\om)$ is the jump sequence of $(P,\om)$.
\end{lemma}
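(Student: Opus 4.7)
The plan is to handle the $M$-support case first and transfer to $F$-support via the expansion~\eqref{equ:F2M}. The crux of the $M$-support case is the pointwise inequality $f(b) \geq \jump(b) + 1$ for every $(P,\om)$-partition $f$, which is both sharp (achieved by the explicit assignment $f(b) = \jump(b)+1$) and strong enough to force dominance. Before starting, I would make a short detour to verify that $\jump(P,\om) = (j_0,\ldots,j_k)$ is a genuine composition, i.e.\ that $j_i > 0$ for every $0 \leq i \leq k$: fixing a saturated chain from a minimal element up to an element of maximum jump $k$, the total increase of $\jump$ along the chain equals $k$ and is bounded below by the number of strict edges on the chain, which is also at most $k$; equality forces each strict edge to bump $\jump$ by exactly $1$ and each weak edge to leave it unchanged, so every intermediate value $0,1,\ldots,k$ is attained.

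For the $M$-support claim proper, I would check that $f(b) = \jump(b) + 1$ is a $(P,\om)$-partition by a cover-by-cover extension argument: given $a \pcovered b$, any chain realizing $\jump(a)$ extends through $b$, so $\jump(b) \geq \jump(a)$, with a strict increase when the edge is strict. Proposition~\ref{pro:mexpansion} then registers $\jump(P,\om) \in \Msupp(P,\om)$. For the dominance upper bound, I would prove the pointwise inequality $f(b) \geq \jump(b)+1$ for an arbitrary $(P,\om)$-partition $f$ by walking up a chain from a minimal element to $b$ with $\jump(b)$ strict edges, starting at $f \geq 1$ and stepping by $\geq 0$ on weak edges and $\geq 1$ on strict edges. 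This gives the set containment $\{b : f(b) \leq \ell\} \subseteq \{b : \jump(b) \leq \ell - 1\}$ for every $\ell$, which is exactly the partial-sum inequality $\alpha_1 + \cdots + \alpha_\ell \leq j_0 + \cdots + j_{\ell - 1}$ defining $\alpha \domleq \jump(P,\om)$, for every $\alpha \in \Msupp(P,\om)$.

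The transfer to $\Fsupp$ is then a short formal argument. The expansion~\eqref{equ:F2M} gives $\Fsupp(P,\om) \subseteq \Msupp(P,\om)$, which hands over the upper bound for free. To place $\jump(P,\om)$ into $\Fsupp$, I would expand $\kpw = \sum_\beta c_\beta F_\beta$ and read the $M_{\jump(P,\om)}$-coefficient as $\sum c_\beta$ taken over all $\beta$ that are refined by $\jump(P,\om)$; positivity of this sum (from the $M$-support step) produces some $\beta \in \Fsupp(P,\om)$ with $\jump(P,\om)$ refining $\beta$, hence $\jump(P,\om) \domleq \beta$, while the $M$-support upper bound applied to $\beta$ forces $\beta \domleq \jump(P,\om)$, giving $\beta = \jump(P,\om)$. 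The main obstacle, such as it is, lies in keeping the directions straight when translating between composition refinement (i.e.\ containment of $S(\cdot)$), partial-sum containment, and the dominance order; each step is routine on its own, but they need to line up consistently.
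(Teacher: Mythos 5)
Your proof is correct and follows essentially the same route as the paper: the explicit partition $f(b)=\jump(b)+1$ is exactly the paper's ``greedy'' $g$, your pointwise inequality $f(b)\ge\jump(b)+1$ (and its translation into partial-sum dominance) is the substance behind the paper's terser ``by construction of $g$,'' and your transfer to $F$-support via the unitriangularity of \eqref{equ:F2M} is the same as the paper's closing paragraph. One detail worth tightening in your side-check that $\jump(P,\om)$ is a genuine composition: you need to take the chain to be one that \emph{realizes} the jump of its top element (so it has exactly $k$ strict edges), since for an arbitrary saturated chain up to a maximum-jump element the strict-edge count could be strictly less than $k$, and then the ``equality'' you invoke to force each edge to step $\jump$ by exactly $0$ or $1$ would not be available.
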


\begin{proof}
Consider the \emph{greedy} $(P,\om)$-partition $g$, defined in the following way.  Choose $g$ so that $\vert g^{-1}(1)\vert $ is as large as possible and, from the remaining elements of $P$, $\vert g^{-1}(2)\vert $ is as large as possible, and so on. We see that
\[
(\vert g^{-1}(1)\vert , \ldots, \vert g^{-1}(k)\vert ) = \jump(P,\om)
\]
where $k$ is the maximum jump of an element of $(P,\om)$.  By Proposition~\ref{pro:mexpansion}, $\jump(P,\om)$ is thus in the $M$-support of $(P,\om)$ and, by construction of $g$, is the maximum such element under dominance order.

The result for the $F$-support can be shown directly in terms of linear extensions, but it follows quickly from what we just proved for the $M$-support.  Indeed, by~\eqref{equ:F2M}, we know that $F_\alpha$ takes the form $M_\alpha + \sum_\beta M_\beta$, where every $\beta$ is strictly less than $\alpha$ in dominance order.  Therefore the maximum elements in the $F$-support and $M$-support are equal.\end{proof}

The following result gives our first main necessary conditions.

\begin{corollary}\label{cor:jump}
If labeled posets $(P,\om)$ and $(Q,\tau)$ satisfy $\Fsupp(P,\om) \subseteq \Fsupp(Q,\tau)$ or even just $\Msupp(P,\om) \subseteq \Msupp(Q,\tau)$, then $\jump(P,\om) \domleq \jump(Q,\tau)$.  
\end{corollary}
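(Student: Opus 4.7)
The plan is to derive this as a near-immediate consequence of Lemma~\ref{lem:jump}. Of the two hypotheses in the statement, the $M$-support containment is the weaker one: from~\eqref{equ:F2M}, every $F_\alpha$ expands as $M_\alpha$ plus a nonnegative combination of other $M_\beta$'s, so whenever $\Fsupp(P,\om) \subseteq \Fsupp(Q,\tau)$ we also get $\Msupp(P,\om) \subseteq \Msupp(Q,\tau)$ (this is precisely the right-hand implication in the diagram~\eqref{equ:implications}). It therefore suffices to establish the conclusion assuming only $\Msupp(P,\om) \subseteq \Msupp(Q,\tau)$.

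Under this assumption, I would argue as follows. By Lemma~\ref{lem:jump}, $\jump(P,\om)$ is an element of $\Msupp(P,\om)$, so by the containment hypothesis it also lies in $\Msupp(Q,\tau)$. Applying Lemma~\ref{lem:jump} a second time, $\jump(Q,\tau)$ is the $\domleq$-maximum element of $\Msupp(Q,\tau)$, so in particular it dominates $\jump(P,\om)$. This is exactly the inequality $\jump(P,\om) \domleq \jump(Q,\tau)$ that we want.

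There is no substantive obstacle: once Lemma~\ref{lem:jump} has identified the jump sequence as the dominance-top of the $M$-support, the corollary is a one-line transitivity argument. The only small point worth spelling out in the final write-up is the reduction of the $F$-support hypothesis to the $M$-support hypothesis via~\eqref{equ:F2M}, so the reader sees that a single proof handles both versions of the statement.
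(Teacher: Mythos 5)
Your proof is correct and matches the paper's (implicit) argument: the corollary is stated immediately after Lemma~\ref{lem:jump} with no separate proof, precisely because it follows by the two-step transitivity argument you give, with the $F$-support hypothesis reduced to the $M$-support one via the implication diagram~\eqref{equ:implications} (or, equally well, handled directly since Lemma~\ref{lem:jump} already identifies the jump sequence as the dominance-maximum of the $F$-support too).
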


Because of the second sentence in Proposition~\ref{pro:fourfold}, we can get three additional necessary conditions for $\Fsupp(P,\om) \subseteq \Fsupp(Q,\tau)$ by applying the bar and star involutions and then Corollary~\ref{cor:jump}:
\begin{enumerate}
\item $\jump(\switch{P,\om}) \domleq \jump(\switch{Q,\tau})$;\medskip
\item $\jump(\rotatep{P,\om}) \domleq \jump(\rotatep{Q,\tau})$;\medskip
\item $\jump(\switchx{\rotatep{P,\om}}) \domleq \jump(\switchx{\rotatep{Q,\tau}})$.
\end{enumerate}
By Proposition~\ref{pro:twofold}, the inequality in (b) is also a necessary condition for \newline $\Msupp(P,\om) \subseteq \Msupp(Q,\tau)$.

\begin{example}\label{exa:barjump}
Consider the labeled posets $(P,\om)$ and $(Q,\tau)$ in Figure~\ref{fig:jump}.%
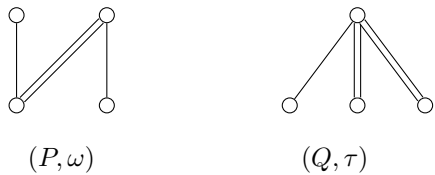
\begin{figure}
\begin{center}
\begin{tikzpicture}[scale=0.6]
\begin{scope}
\begin{scope}
\tikzstyle{every node}=[draw, shape=circle, inner sep=2pt]; 
\draw (0,0) node (a3) {};
\draw (0,2) node (a4) {};
\draw (2,0) node (a1) {};
\draw (2,2) node (a2) {};
\draw (a3) -- (a4);
\draw (a2) -- (a1);
\draw[double distance=2pt] (a3) -- (a2);
\end{scope}
\draw (1,-1.2) node {$(P,\om)$};
\end{scope}
\begin{scope}[xshift=40ex]
\begin{scope}
\tikzstyle{every node}=[draw, shape=circle, inner sep=2pt]; 
\draw (0,0) node (a1) {};
\draw (1.5,2) node (a2) {};
\draw (1.5,0) node (a3) {};
\draw (3,0) node (a4) {};
\draw (a1) -- (a2);
\draw[double distance=2pt] (a3) -- (a2);
\draw[double distance=2pt] (a4) -- (a2);
\end{scope}
\draw (1,-1.2) node {$(Q,\tau)$};
\end{scope}
\end{tikzpicture}
\caption{$(P,\om)$ and  $(Q,\tau)$ have the same jump sequence, but different jump sequences after applying either the bar or star involution.}
\label{fig:jump}
\end{center}
\end{figure}
Even though their jump sequences are equal, we have $\jump(\rotatep{P,\om}) = (3,1)$ whereas $\jump(\rotatep{Q,\tau}) = (2,2)$, implying $\Fsupp(\rotatep{P,\om}) \not\subseteq \Fsupp(\rotatep{Q,\tau})$ or, equivalently, $\Fsupp(P,\om) \not\subseteq \Fsupp(Q,\tau)$.  Moreover, $(P,\om)$ and $(Q,\tau)$ are actually incomparable under $F$-support containment since $\jump(\switch{P,\om}) = (2,2)$ whereas $\jump(\switch{Q,\tau}) = (3,1)$, so $\Fsupp(Q,\tau) \not\subseteq \Fsupp(P,\om)$.

Using an analogous approach, we get that $\Msupp(\rotatep{P,\om}) \not\subseteq \Msupp(\rotatep{Q,\tau})$, but then we can check computationally that $\Msupp(Q,\tau) \subseteq \Msupp(P,\om)$.  This is one example of the different behaviors of $F$-support and $M$-support containments.
\end{example}

As another example of the utility of the extra necessary conditions, for a naturally labeled poset $(P,\om)$, we have $\jump(P,\om)=(\vert P\vert )$, which does not give useful information whereas looking at $\jump(\switch{P,\om})$ might allow us to draw a conclusion.

Corollary~\ref{cor:jump} gives us the following necessary condition about maximal chains.

\begin{corollary}\label{cor:height}
If labeled posets $(P,\om)$ and $(Q,\tau)$ satisfy \newline $\Fsupp(P,\om) \subseteq \Fsupp(Q,\tau)$, then the maximum number of strict (resp.~weak) edges in a maximal chain in $(P,\om)$ must be greater than or equal to the maximum number of strict (resp.~weak) edges in a maximal chain in $(Q,\tau)$.  
\end{corollary}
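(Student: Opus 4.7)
The strategy is to translate the claim about strict edges on maximal chains into a statement about the length of the jump sequence, then invoke Corollary~\ref{cor:jump}, and finally handle the weak-edge case by the bar involution.

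First I would verify the following interpretation of the jump sequence: if $\jump(P,\om) = (j_0, \ldots, j_k)$, then $k$ equals the maximum number of strict edges on any maximal chain of $(P,\om)$. The key observation is that jump is weakly monotone along $\leq_P$: if $a \pcovered b$ in $P$, then $\jump(b) \geq \jump(a)$, with $\jump(b) \geq \jump(a)+1$ when the covering edge is strict. Consequently, the maximum jump value $k$ is attained at some maximal element $b$, and by taking a saturated chain from $b$ down to a minimal element that achieves the maximum number of strict edges, we obtain a maximal chain with exactly $k$ strict edges; no maximal chain can have more, again by the monotonicity of jump.

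Next I would record a short lemma about dominance order on compositions: if $\alpha \domleq \beta$ are compositions of the same integer $n$, then $\ell(\alpha) \geq \ell(\beta)$. Indeed, if $\ell(\alpha) < \ell(\beta)$, then at the index $i = \ell(\alpha)$ we have $\sum_{j=1}^{i} \alpha_j = n$ but $\sum_{j=1}^{i} \beta_j < n$, contradicting $\alpha \domleq \beta$. Combining this with Corollary~\ref{cor:jump}, the hypothesis $\Fsupp(P,\om) \subseteq \Fsupp(Q,\tau)$ gives $\jump(P,\om) \domleq \jump(Q,\tau)$, hence $\ell(\jump(P,\om)) \geq \ell(\jump(Q,\tau))$, which by the previous paragraph says that the maximum number of strict edges on a maximal chain of $(P,\om)$ is at least that of $(Q,\tau)$.

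For the parenthetical ``weak'' version, I would apply the bar involution. By Proposition~\ref{pro:fourfold}, $\Fsupp(P,\om) \subseteq \Fsupp(Q,\tau)$ implies $\Fsupp(\switch{P,\om}) \subseteq \Fsupp(\switch{Q,\tau})$. Since the bar involution interchanges strict and weak edges, maximal chains in $(P,\om)$ with many weak edges correspond to maximal chains in $\switch{P,\om}$ with many strict edges. Applying the already-established strict-edge statement to $\switch{P,\om}$ and $\switch{Q,\tau}$ then yields the desired weak-edge inequality.

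The main (mild) obstacle is the bookkeeping in the first paragraph: making sure the ``maximum jump of an element'' really does equal ``maximum number of strict edges on a full maximal chain'' rather than only on a saturated chain from some element downward. The monotonicity of jump along covering relations is what lets us promote a witnessing element to a maximal one, so the two quantities agree; once this is in hand, the remainder is a direct application of Corollary~\ref{cor:jump} together with the composition-dominance lemma.
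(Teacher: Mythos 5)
Your proof is correct and follows essentially the same route as the paper's: invoke Corollary~\ref{cor:jump} to obtain $\jump(P,\om) \domleq \jump(Q,\tau)$, deduce the length inequality from dominance, and identify the length of the jump sequence with one plus the maximum number of strict edges on a maximal chain, then finish the weak case via the bar involution and Proposition~\ref{pro:fourfold}. The paper states the two supporting observations without elaboration, whereas you supply the monotonicity-of-jump argument and the dominance-implies-length lemma explicitly; this is just a more detailed write-up of the same proof.
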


\begin{proof}
By Corollary~\ref{cor:jump}, we know that $\jump(P,\om) \domleq \jump(Q,\tau)$, in which case the length of $\jump(P,\om)$ must be greater than or equal to the length of $\jump(Q,\tau)$.  Observe that the length of the jump sequence of a labeled poset $(P,\om)$ equals 1 plus the maximum number of strict edges on a saturated chain from a maximal element down to a minimal element of $P$. 

For the statement about weak edges, apply the bar involution and Proposition~\ref{pro:fourfold}.
\end{proof}

\!If we use the weaker hypothesis in Corollary~\ref{cor:height} that $\Msupp(P,\om) \subseteq \Msupp(Q,\tau)$, the first paragraph of the proof still works and we can draw the same conclusion about the number of strict edges.  The $(P,\om)$ and $(Q,\tau)$ mentioned just before Proposition~\ref{pro:twofold} give a counterexample to the statement about weak edges holding in the case of $M$-support containment.

Inspired by the unresolved \cite[Question~4.4]{McWa14}, we are intrigued by the potential
for a positive answer to the following question.

\begin{question}\label{que:longestchain}
Is Corollary~\ref{cor:height} true when we remove the requirement that the edges be strict (resp.\ weak) and instead consider all the edges on a maximal chain?
\end{question}

The answer to Question~\ref{que:longestchain} is ``yes'' when $\vert P\vert  \leq 6$, and in the case of skew-diagram labeled posets, as follows from \cite[Theorem~4.1]{McN14}.

\subsection{Convex subposets and Greene shape}\label{sub:convex}

As we saw in the proof of \linebreak Lemma~\ref{lem:jump}, we obtain the jump sequence by being greedy starting at the bottom of the poset.  This subsection leads off with the idea of being ``even greedier'' by giving ourselves more freedom in where we start.  The resulting necessary condition becomes particularly nice when we apply it to naturally labeled posets.

Recall that a \emph{convex subposet} $S$ of $P$ is a subposet of $P$ such that if $x,y,z \in P$ with $x <_P y <_P z$, then $x, z \in S$ implies $y \in S$.  A convex subposet of $(P,\om)$ inherits its designation of strict and weak edges from $(P,\om)$, and let us say that a convex subposet is \emph{weak} (resp.\ \emph{strict}) if all its edges are weak (resp.\ strict).  

\begin{proposition}\label{pro:convex}
Suppose labeled posets $(P,\om)$ and $(Q,\tau)$ satisfy $\Msupp(P,\om) \subseteq \Msupp(Q,\tau)$.  Then, for all $i$, the maximum cardinality of a union of $i$ weak convex subposets of $(P,\om)$ must be less than or equal to that of $(Q,\tau)$.
\end{proposition}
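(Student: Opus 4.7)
The plan is to characterize $\nu_i(P,\om)$, the maximum cardinality of a union of $i$ weak convex subposets, in terms of $\Msupp(P,\om)$. Specifically, I aim to show that $\nu_i(P,\om)$ equals $\mu_i(P,\om)$, defined as the maximum, over $\alpha \in \Msupp(P,\om)$, of the sum of the $i$ largest entries of $\alpha$ (padding with zeros if $\alpha$ has fewer than $i$ entries). Granted this characterization, the proposition is immediate, since $\Msupp(P,\om) \subseteq \Msupp(Q,\tau)$ makes the defining maximum for $\mu_i(P,\om)$ a maximum over a subset of that for $\mu_i(Q,\tau)$.

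The inequality $\nu_i(P,\om) \geq \mu_i(P,\om)$ is the easy direction. Given a $(P,\om)$-partition $f$ of shape $\alpha$ and any $i$-subset $S$ of positions in $\alpha$, each fiber $f^{-1}(j)$ is convex (since $f$ is order-preserving and constant there) with only weak cover edges inside (a strict cover would force a value difference). Thus the preimages of the $S$-indexed values form $i$ weak convex subposets whose union has cardinality $\sum_{j \in S} \alpha_j$.

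For the reverse inequality, let $C_1, \ldots, C_i$ be weak convex subposets with $U := \bigcup_j C_j$ of size $N$. The key lemma I would prove is: any chain $x_0 <_P x_1 <_P \cdots <_P x_k$ in $U$ on which every $(P,\om)$-partition is forced to be strictly increasing satisfies $k+1 \leq i$. For the proof, assign each $x_l$ to some $C_{j_l}$ containing it; I claim the $j_l$'s are pairwise distinct. If instead $j_{l_1} = j_{l_2} = j$ with $l_1 < l_2$, convexity of $C_j$ (together with $C_j \subseteq U$) forces every element of $P$ lying between $x_{l_1}$ and $x_{l_2}$ into $C_j$, and in particular every saturated $P$-chain from $x_{l_1}$ to $x_{l_1 + 1}$ lies inside $C_j$. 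The forced-strict condition on the pair $(x_{l_1}, x_{l_1+1})$ then yields a strict cover edge inside $C_j$, contradicting $C_j$ being weak.

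To close the argument, define $g \colon U \to \{1, \ldots, i\}$ by letting $g(y)$ be $1$ plus the length of the longest forced-strict chain in $U$ ending at $y$; the key lemma ensures $g(U) \subseteq \{1,\ldots,i\}$. I would then scale $g$ by $K = \vert P\vert + 1$ and extend it to a $(P,\om)$-partition $f$ on $P$ via topological induction over $P \setminus U$, placing each new element inside the non-empty interval determined by its already-placed neighbors; the scaling leaves enough room to accommodate any strict-cover chain through $P \setminus U$ between two fixed $U$-values. The resulting $f$ has $\vert f(U)\vert \leq i$. Letting $\alpha$ be its shape and $S$ the set of positions of the values appearing on $U$, we have $\alpha \in \Msupp(P,\om) \subseteq \Msupp(Q,\tau)$, so there is a $(Q,\tau)$-partition $f'$ of shape $\alpha$; then $\bigcup_{j \in S} f'^{-1}(j)$ is a union of at most $i$ weak convex subposets of $(Q,\tau)$ of size $\sum_{j \in S} \alpha_j \geq N$, giving $\nu_i(Q,\tau) \geq N$. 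I expect the extension step to be the most delicate part, since one must verify that the constraints imposed on each element of $P \setminus U$ by the scaled $g$ are simultaneously satisfiable.
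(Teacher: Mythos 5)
Your proposal is correct, and its high-level strategy — characterize the quantity $\nu_i(P,\om)$ (maximum size of a union of $i$ weak convex subposets) purely in terms of $\Msupp(P,\om)$, then invoke the support containment — is exactly the paper's. The difference is in rigor. The paper's proof is very brief: it records the observation that a subposet is a weak convex subposet if and only if its elements can all receive the same value in some $(P,\om)$-partition, and then simply \emph{asserts} that $\nu_i(P,\om)$ equals the largest sum of $i$ distinct entries of a composition in $\Msupp(P,\om)$. The direction you call ``easy'' (fibers of a $(P,\om)$-partition are weak convex subposets, so $\nu_i \geq \mu_i$) does follow immediately from that observation, but the reverse inequality $\nu_i \leq \mu_i$ does not: given $i$ weak convex subposets $C_1, \ldots, C_i$, one cannot in general find a single $(P,\om)$-partition constant on every $C_j$ simultaneously (e.g.\ take $P = \{a,b,c,d\}$ with cover relations $a < b$, $c < d$, $c < b$ weak and $a < d$ strict, and $C_1=\{a,b\}$, $C_2=\{c,d\}$; the simultaneous constraints are contradictory, though a partition with only two values on $C_1\cup C_2$ does exist). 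Your forced-strict-chain lemma and the level function $g$ are precisely what is needed to establish $\nu_i \leq \mu_i$ honestly, and they are new relative to the paper's text. One very minor wording issue: in the key lemma you justify the strict edge by saying every saturated chain from $x_{l_1}$ to $x_{l_1+1}$ lies in $C_j$; the cleaner justification is that the whole interval $[x_{l_1},x_{l_1+1}]_P$ lies in $C_j$ and, since the pair is forced strict, this interval is not flattenable and hence (being convex) contains a strict cover edge of $P$ — which is then a strict edge of $C_j$. Your concern about the extension step is warranted but resolvable: setting $f(p) = \max\bigl(1 + s_0(p),\ \max_{u \in U,\, u \leq_P p} (K g(u) + s(u,p))\bigr)$, where $s_0(p)$ and $s(u,p)$ count the maximum number of strict edges on saturated chains from a minimal element, respectively from $u$, up to $p$, produces a genuine $(P,\om)$-partition agreeing with $Kg$ on $U$, because whenever $g(u)=g(u')$ with $u <_P u'$ in $U$ there can be no strict edge between them (otherwise $(u,u')$ would be forced strict and $g(u)<g(u')$), while whenever $g(u)<g(u')$ the gap $K$ exceeds any possible strict-edge count. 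In short: correct proof, same framework as the paper, but you supply a genuine argument for the crux that the paper leaves implicit.
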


\begin{proof}  Observe that in a labeled poset $(P,\om)$, a subposet $S$ forms a weak convex subposet if and only if its elements can all have the same image in some $(P,\om)$-partition.  Therefore, the maximum cardinality of a union of $i$ weak convex subposets of $(P,\om)$ is exactly the largest sum $\alpha_{j_1}+\cdots+\alpha_{j_i}$ of $i$ distinct elements of  $\alpha$, among all $\alpha \in \Msupp(P,\om)$.  The result follows since $\Msupp(P,\om) \subseteq \Msupp(Q,\tau)$.
\end{proof}

As usual, if we use the stronger hypothesis that $\Fsupp(P,\om) \subseteq \Fsupp(Q,\tau)$ and the bar involution, we can draw the analogous conclusion about strict convex subposets.

Proposition~\ref{pro:convex} has a particularly appealing consequence in the case of naturally labeled posets.   Inspired by the approach of \cite{LiWe20a}, we consider posets according to their Greene shape.

\begin{definition}[\cite{Gre76}]\label{def:greene}
For a poset $P$ and $k\geq0$, let $c_k$ (resp.\ $a_k$) denote the maximum cardinality of a union of $k$ chains (resp.\ antichains) in $P$.  Then the \emph{chain Greene shape} (resp.\ \emph{antichain Greene shape}) of $P$ is the sequence $\lambda = (\lambda_1, \lambda_2,\ldots,\lambda_\ell)$ where $\lambda_k = c_k - c_{k-1}$ (resp, $\lambda_k = a_k -a_{k-1}$) and $\ell$ is the largest value of $k$ for which $\lambda_k > 0$.
\end{definition}

For example, the poset in Figure~\ref{fig:greene42} has chain Greene shape $(4,2)$ and antichain Greene shape $(2,2,1,1)$.  
\begin{figure}[htbp]
\begin{center}
\begin{tikzpicture}[scale=0.8]
\tikzstyle{every node} = [draw, shape=circle, inner sep=2pt];
\draw (0,0) node (1) {};
\draw (0,1) node (2) {};
\draw (0,2) node (4) {};
\draw (0,3) node (6) {};
\draw (-1,1) node (3) {};
\draw (1,2) node (5) {};
\draw (1) -- (2) -- (4) -- (6);
\draw (3) -- (4);
\draw (2) -- (5);
\end{tikzpicture}
\end{center}
\caption{A poset with chain Greene shape $(4,2)$ and antichain Green shape $(2,2,1,1)$.}
\label{fig:greene42}
\end{figure}
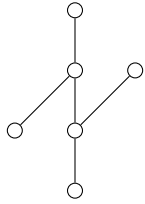

Greene proved a beautiful property of these shapes.  
\begin{theorem}[\cite{Gre76}]\label{thm:greene}
The chain Greene shape and antichain Greene shape of a poset are both partitions and are conjugates (i.e.\ transposes) of each other.
\end{theorem}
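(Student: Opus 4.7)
My plan has three parts: show the chain Greene shape $\lambda$ is a partition, show the antichain Greene shape $\mu$ is a partition, and establish the conjugacy $\mu = \lambda'$.

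For the first part, the goal is to verify $c_{k+1} + c_{k-1} \leq 2c_k$, which is equivalent to $\lambda_{k+1} \leq \lambda_k$. Given a maximum $(k{+}1)$-chain family $\mathcal{C}$ and a maximum $(k{-}1)$-chain family $\mathcal{D}$, I would form their multiset union and perform a zig-zag exchange in the bipartite overlap graph whose vertices are the chains of $\mathcal{C}$ and $\mathcal{D}$: by cutting and recombining chain segments along alternating comparability paths, $\mathcal{C} \cup \mathcal{D}$ (as a multiset) can be repartitioned into two $k$-chain families with the same total cardinality, forcing $c_{k+1} + c_{k-1} \leq 2c_k$. The proof that $\mu$ is a partition follows by the formally dual exchange argument applied to antichain families, using that the multiset union of two antichain families can be rearranged into two antichain families whose sizes lie between those of the originals.

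The conjugacy $\mu = \lambda'$ is the heart of the theorem and the hardest step. My plan is to prove the following stronger joint statement by induction on $|P|$: for every $k, \ell \geq 0$ there exist a maximum $k$-chain family $\mathcal{C}$ and a maximum $\ell$-antichain family $\mathcal{A}$ that are \emph{orthogonal}, meaning $|C_i \cap A_j| \leq 1$ for all $i,j$. Once in hand, double-counting $\sum_{i,j} |C_i \cap A_j|$ together with the easy inequalities $c_k \leq \sum_i \min(\lambda_i, \ell) + k\ell$ (and its antichain dual) pins down both $c_k$ and $a_\ell$ in terms of the partition $\lambda$, from which $\mu_k = \lambda'_k$ reads off directly. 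The inductive step removes an extremal element $x$ of $P$, applies the inductive hypothesis to $P \setminus \{x\}$, and stitches $x$ back in via a Hall-style marriage argument on the chain (or antichain) it joins.

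The main obstacle is the orthogonality construction in the inductive step: inserting $x$ back into $P$ may force chain/antichain exchanges that threaten optimality in one of the two families, and it is delicate to preserve both simultaneously. A cleaner alternative I would try in parallel is to fix a linear extension of $P$, view it as a permutation, and invoke Schensted's theorem together with the known conjugation between the rows and columns of the RSK shape: the maximum unions of $k$ increasing (resp.\ decreasing) subsequences of a permutation are computed by the first $k$ columns (resp.\ rows) of its RSK shape, and one then verifies that the chains and antichains of $P$ realized by such subsequences are cofinal among all chain and antichain families of $P$, transferring the conjugacy from permutations to posets.
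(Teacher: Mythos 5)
The paper does not prove this statement---it is Greene's 1976 theorem, cited from \cite{Gre76}---so there is no in-paper argument to compare against; I am evaluating your plan on its own merits.

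The conjugacy step, which you correctly identify as the heart of the theorem, has a genuine flaw. The ``orthogonality'' condition you define, $\vert C_i \cap A_j\vert \leq 1$, holds automatically for \emph{every} chain $C_i$ and \emph{every} antichain $A_j$: two distinct elements of a chain are comparable, two distinct elements of an antichain are incomparable, so they can share at most one element. The condition therefore carries no information, and there is nothing for the induction to preserve. The double-counting it feeds into yields only the easy bound $c_k + a_\ell \leq \vert P\vert + k\ell$; the actual content of Greene's theorem is that for every $k$ this bound is \emph{tight} at some $\ell$ (a saturated family exists), and dually, and establishing that tightness is precisely the hard exchange/flow argument (essentially the Greene--Kleitman saturation theorem). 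Your sketch does not produce it, and the Hall-style patching step does not obviously preserve anything beyond the trivial property.

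The RSK alternative is also not viable for general posets. Schensted's theorem is the special case of Greene's theorem for two-dimensional posets. Fixing a single linear extension of $P$ and running RSK computes the Greene shape of a two-dimensional \emph{extension} of $P$, which has strictly more comparabilities than $P$ whenever $P$ is not two-dimensional; its antichains are scarcer and its chains more plentiful, so the resulting shape generically disagrees with the Greene shape of $P$. The ``cofinality'' claim you would need is false in general, and invoking Schensted here is essentially circular. Your concavity arguments for parts one and two are in the right spirit---alternating-path exchange arguments do appear in this circle of ideas---but note that $c_k$ counts the cardinality of a \emph{union} of chains (overlaps counted once), so the multiset-repartition framing needs more care than the sketch provides.
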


This theorem together with Proposition~\ref{pro:convex} leads to the following result.

\begin{theorem}\label{thm:antigreene}
Suppose naturally labeled posets $P$ and $Q$ satisfy $\Fsupp(P) \subseteq \Fsupp(Q)$.  Then 
\begin{enumerate}
\item the antichain Greene shape of $P$ is dominated by that of $Q$, and
\item the chain Greene shape of $P$ dominates that of $Q$.
\end{enumerate}
\end{theorem}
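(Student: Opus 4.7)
The approach will be to prove (a) via Proposition~\ref{pro:convex} applied to the bar conjugates, and then derive (b) from (a) via Theorem~\ref{thm:greene}.

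For (a), I would apply the bar involution to $P$ and $Q$, obtaining labeled posets $\switchx{P}$ and $\switchx{Q}$ in which every edge is strict (since naturally labeled posets have only weak edges). Proposition~\ref{pro:fourfold} guarantees that $F$-support containment is preserved, so $\Fsupp(\switchx{P}) \subseteq \Fsupp(\switchx{Q})$, and hence $\Msupp(\switchx{P}) \subseteq \Msupp(\switchx{Q})$ via~\eqref{equ:implications}. Proposition~\ref{pro:convex} applied to $\switchx{P}$ and $\switchx{Q}$ then says that for every $i$, the maximum cardinality of a union of $i$ weak convex subposets of $\switchx{P}$ is bounded above by the corresponding quantity for $\switchx{Q}$.

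The key identification is that this maximum equals $a_i(P)$ from Definition~\ref{def:greene} (and similarly for $Q$). Since every edge of $\switchx{P}$ is strict, a weak convex subposet is a subset with no edges, i.e., an antichain of $P$. From the proof of Proposition~\ref{pro:convex}, the maximum also equals the largest sum $\alpha_{j_1}+\cdots+\alpha_{j_i}$ over $\alpha \in \Msupp(\switchx{P})$. Each $\alpha \in \Msupp(\switchx{P})$ arises as the level-size sequence of a strictly order-preserving map $f : P \to \mathbb{Z}_{>0}$---that is, of a linearly ordered antichain partition of $P$---so the sum of any $i$ of its parts is at most $a_i(P)$. For the reverse inequality, I would invoke a Greene--Kleitman-style structure theorem guaranteeing that the antichain Greene shape of $P$ is realized as the part-size multiset of some antichain partition of $P$ admitting an acyclic linear ordering. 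The resulting $\alpha \in \Msupp(\switchx{P})$ has $i$ largest parts summing to $a_i(P)$. Combining these identifications with the inequality from Proposition~\ref{pro:convex} yields $a_i(P) \leq a_i(Q)$ for all $i$, which is precisely (a). Statement (b) then follows immediately: by Theorem~\ref{thm:greene} the chain Greene shape is the conjugate of the antichain Greene shape, and dominance order on partitions reverses under conjugation.

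The main obstacle I anticipate is the reverse inequality above: an arbitrary maximising family of $i$ disjoint antichains need not be acyclically orderable. For example, in the poset of two disjoint $2$-chains $a<b$ and $c<d$, the ``crossing'' partition $\{a,d\}\cup\{b,c\}$ satisfies both $\{a,d\} \prec \{b,c\}$ and $\{b,c\} \prec \{a,d\}$ under the induced quotient relation, so this choice cannot be realised as the level sets of any strictly order-preserving $f$. Selecting a compatible antichain partition whose sizes match the Greene shape---where Greene--Kleitman or an analogous careful construction does the real work---is what ensures the reverse inequality, and is the subtle point of the argument.
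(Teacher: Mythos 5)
Your argument for Theorem~\ref{thm:antigreene} follows the paper's proof exactly: apply the bar involution, use Proposition~\ref{pro:fourfold} and~\eqref{equ:implications} to obtain $\Msupp(\switchx{P})\subseteq \Msupp(\switchx{Q})$, apply Proposition~\ref{pro:convex}, identify weak convex subposets of $\switchx{P}$ with antichains of $P$, and derive (b) from (a) via Theorem~\ref{thm:greene} together with the fact that conjugation reverses dominance. So the approach is the same.

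The extra concern you raise---that the ``reverse inequality'' direction implicit in the terse ``Therefore'' in the proof of Proposition~\ref{pro:convex} needs justification---is a fair reading, but it does not require a Greene--Kleitman structure theorem. Given a maximum union $S$ of $i$ antichains, partition $S$ into its height levels $A_1,\ldots,A_k$ within the subposet $S$ (so $k\le i$). The assignment sending each $A_j$ to the value $j$ extends to a strictly order-preserving map on all of $P$: form the digraph on the blocks $A_1,\ldots,A_k$ together with the singletons of $P\setminus S$, with an edge $X\to Y$ whenever some $x\in X$ satisfies $x<_P y$ for some $y\in Y$; this digraph is acyclic, since any directed cycle visiting a level would force a strictly increasing sequence of level indices that returns to its start (each segment from one level to the next, whether direct or through singletons, yields a relation $a<_P a'$ between elements of the two levels, hence a strict increase in height within $S$), while a cycle visiting no level at all would be a directed cycle in $P$ itself. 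A topological sort of this digraph then gives the desired $f$, whose $i$ largest parts sum to at least $|S|=a_i$. Your crossing example thus correctly shows that a particular maximizing family of antichains need not be the level sets of any valid $f$, but its \emph{union} can always be re-leveled, and that is all Proposition~\ref{pro:convex} needs.
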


\begin{proof}
Applying the bar involution, our hypothesis is equivalent to having posets $\switchx{P}$ and $\switchx{Q}$ with all strict edges and which satisfy $\Fsupp(\switchx{P}) \subseteq \Fsupp(\switchx{Q})$ by Proposition~\ref{pro:fourfold}.  By~\eqref{equ:implications}, we also have $\Msupp(\switchx{P})\subseteq \Msupp(\switchx{Q})$.  If we apply Proposition~\ref{pro:convex}, observe that weak convex subposets in $\switchx{P}$ and $\switchx{Q}$ are exactly their antichains, yielding (a).  

Theorem~\ref{thm:greene} and the fact that conjugation of partitions reverses dominance order imply (b).
\end{proof}

Note that Theorem~\ref{thm:antigreene}(b) is a significant strengthening of Corollary~\ref{cor:height} in the naturally labeled case.  The former also implies that chain Greene shape equality is a necessary condition for $F$-support equality, which is \cite[Theorem~3.11]{LiWe20a}.

\section{Linear extension containment}\label{sec:hasse}

We now transition to looking at operations we can perform on a labeled poset $(P,\om)$ to yield another labeled poset $(Q,\tau)$ such that $(P,\om) \leqF (Q,\tau)$.  Referring to Theorem~\ref{thm:kexpansion} or~\eqref{equ:implications}, a condition that implies the desired inequality is $\L(P,\om) \subseteq \L(Q,\tau)$, which we refer to as ``linear extension containment.''  The goal of this section is to determine exactly when linear extension containment occurs.

One way to ensure $\L(P,\om) \subseteq \L(Q,\tau)$ is to obtain $(Q,\tau)$ from $(P,\om)$ by deleting an edge in the Hasse diagram of $(P,\om)$.  However, our computations suggest that this is a rather coarse operation, by which we mean there are then typically other labeled posets $(R,\sigma)$ such that $\L(P,\om) \subset \L(R,\sigma) \subset \L(Q,\tau)$.  Fortunately, there is an operation on Hasse diagrams with finer results that is just a little more sophisticated than deleting edges.  

It will be convenient for the remainder of this section to identify elements of labeled posets with their labels; for example, $5 <_P 2$ is expressing a relation between the elements with $\om$-labels $5$ and $2$ in some $(P,\om)$.  

\subsection{Redundancy-before-deletion}

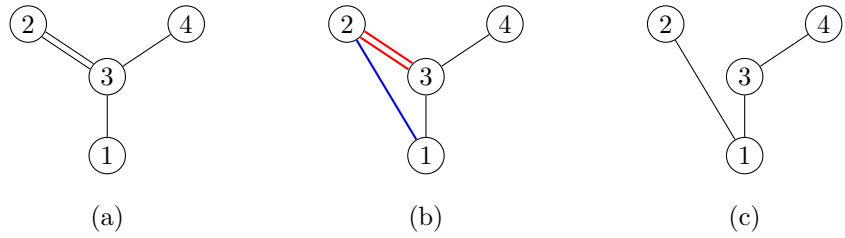
\begin{figure}
\begin{center}
\begin{tikzpicture}[scale=0.7]
\begin{scope}
\begin{scope}
\tikzstyle{every node}=[draw, shape=circle, inner sep=2pt]; 
\draw (0,0) node (a1) {1};
\draw (0,1.5) node (a3) {3};
\draw (-1.5,2.5) node (a2) {2};
\draw (1.5,2.5) node (a4) {4};
\draw (a1) -- (a3) -- (a4);
\draw[double distance=2pt] (a3) -- (a2);
\end{scope}
\draw (0,-1.2) node {(a)};
\end{scope}

\begin{scope}[xshift=40ex]
\begin{scope}
\tikzstyle{every node}=[draw, shape=circle, inner sep=2pt]; 
\draw (0,0) node (a1) {1};
\draw (0,1.5) node (a3) {3};
\draw (-1.5,2.5) node (a2) {2};
\draw (1.5,2.5) node (a4) {4};
\draw (a1) -- (a3);
\draw (a3) -- (a4);
\draw[double distance=2pt, color=red, thick] (a3) -- (a2);
\draw[thick, color=blue] (a1) -- (a2);
\end{scope}
\draw (0,-1.2) node {(b)};
\end{scope}

\begin{scope}[xshift=80ex]
\begin{scope}
\tikzstyle{every node}=[draw, shape=circle, inner sep=2pt]; 
\draw (0,0) node (a1) {1};
\draw (0,1.5) node (a3) {3};
\draw (-1.5,2.5) node (a2) {2};
\draw (1.5,2.5) node (a4) {4};
\draw (a1) -- (a3) -- (a4);
\draw (a1) -- (a2);
\end{scope}
\draw (0,-1.2) node {(c)};
\end{scope}
\end{tikzpicture}
\caption{An example of redundancy-before-deletion}
\label{fig:rbd}
\end{center}
\end{figure}

To describe this finer operation by example, consider the Hasse diagram of the labeled poset $(P,\om)$ in Figure~\ref{fig:rbd}(a). 
 Implicit in this diagram is the relation $1 <_P 2$, which we add explicitly in Figure~\ref{fig:rbd}(b).  Although Figure~4.1(b) is not a bona fide Hasse diagram since it includes this redundant edge, it is still a representation of the relations in $(P,\om)$.  We now delete the edge $3 <_P 2$ to obtain the labeled poset $(Q,\tau)$ in Figure~\ref{fig:rbd}(c).  Since the diagram in Figure~\ref{fig:rbd}(b) encodes the same set of relations among labels as $(P,\om)$, and since $(Q,\tau)$ is obtained just by deleting edges, we get  $\L(P,\om) \subseteq \L(Q,\tau)$.  

We call this operation of adding redundant edges followed by deletion of edges the \emph{redundancy-before-deletion} operation.  Let us give a general definition of this operation.

\begin{definition}\label{def:rbd}
We say that a labeled poset $(Q,\tau)$ can be obtained from a labeled poset $(P,\om)$ by \emph{redundancy-before-deletion} if the following property holds: starting with the Hasse diagram for $(P,\om)$, we can add redundant edges and then perform an ordered sequence of deletions of edges (redundant or not) to yield the Hasse diagram of $(Q,\tau)$.
\end{definition}

Implicit in our discussion is that redundant edges are designated as strict or weak as usual, according to the labels on their endpoints; we will consider a loosening of this designation in Subsection~\ref{sub:freedom}.  To see why ordering the sequence of deletions is needed in Definition~\ref{def:rbd}, see Remark~\ref{rem:deletions}.  It is clear in that if any $(Q,\tau)$ is obtained from any $(P,\om)$ by redundancy-before-deletion, then $\L(P,\om) \subseteq \L(Q,\tau)$.  Redundancy-before-deletion is a finer operation than just deletion: $(Q,\tau)$ in Figure~\ref{fig:rbd}(c) has just one more linear extension than $(P,\om)$ in Figure~\ref{fig:rbd}(a), whereas just deleting the edge $3 <_P 2$ from $(P,\om)$ would have increased the number of linear extensions by 2.  Another example of redundancy-before-deletion appears in Figure~\ref{fig:first_example}, where the two redundant edges $1 <_P 2$ and $1 <_P 4$ have been added to $(P,\om)$ before the deletion of $1 <_P 3$.

\subsection{Linear extension containment implies redundancy-before-deletion}

It is natural to ask the following question: beyond redundancy-before-deletion, what other operations can we perform on a labeled poset $(P,\om)$ to yield another labeled poset $(Q,\tau)$ such that $\L(P,\om) \subseteq \L(Q,\tau)$?  The remainder of this section is devoted to giving a full answer to this question by showing that all such $(Q,\tau)$ are obtained from $(P,\om)$ by redundancy-before-deletion; see Theorem~\ref{thm:rbd} for the precise statement.  

\!\!As a bridge between linear extension containment and redundancy-before-deletion, we will use the set of all strict relations in $(P,\om)$.  Formally, we define 
\[
\lessthan(P,\om) = \{(a,b) : a<_P b\}
\]
and call it the \emph{less-than set} of $(P,\om)$.  Recall that we are identifying elements of $(P,\om)$ with their labels, so the elements of the less-than set are ordered pairs of labels.  

\begin{theorem}\label{thm:rbd}
Let $(P,\om)$ and $(Q,\tau)$ be labeled posets.  The following are equivalent:
\begin{enumerate}
\renewcommand{\theenumi}{\arabic{enumi}}
\item $\L(P,\om) \subseteq \L(Q,\tau)$;
\item $\lessthan(P,\om) \supseteq \lessthan(Q,\tau)$;
\item $(Q,\tau)$ is obtained from $(P,\om)$ by redundancy-before-deletion.
\end{enumerate}
\end{theorem}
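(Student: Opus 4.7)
The plan is to prove the three equivalences cyclically via $(3) \Rightarrow (1) \Rightarrow (2) \Rightarrow (3)$, with $(2) \Rightarrow (3)$ being the main content.

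The direction $(3) \Rightarrow (1)$ should be essentially immediate from the remarks preceding the theorem: adding a redundant edge does not change the underlying partial order (since the relation is already implied), and deleting edges can only weaken the partial order, hence can only enlarge the set of linear extensions. For $(1) \Rightarrow (2)$, I would argue the contrapositive. Suppose there exists $(a,b) \in \lessthan(Q,\tau) \setminus \lessthan(P,\om)$, so $a <_Q b$ but $a \not<_P b$. Then in $P$ we have either $b <_P a$ or $a$ and $b$ are incomparable, and in both cases a standard linear-extension argument (extending the relation $b < a$ or the new relation $b < a$ to a total order compatible with $<_P$) yields a linear extension $\pi \in \L(P,\om)$ with $b$ appearing before $a$. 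This $\pi$ cannot lie in $\L(Q,\tau)$ since $a <_Q b$, contradicting $\L(P,\om) \subseteq \L(Q,\tau)$.

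The main work lies in $(2) \Rightarrow (3)$. Write $E(P)$ and $E(Q)$ for the sets of cover relations (Hasse edges) of $P$ and $Q$. My plan is to give a two-phase construction. In Phase 1, add to the Hasse diagram of $(P,\om)$ every edge in $E(Q) \setminus E(P)$: for any such cover relation $a \prec_Q b$, hypothesis (2) gives $a <_P b$, so this edge merely records a relation already implied by $(P,\om)$ and is therefore a legitimate redundant edge. Since the strictness of an edge is determined solely by the $\om$-labels of its endpoints, each added edge automatically inherits the correct (strict or weak) designation to match $\tau$. In Phase 2, delete, in some order, every edge in $E(P) \setminus E(Q)$. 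After both phases the remaining edge set is $(E(P) \cup E(Q)) \setminus (E(P) \setminus E(Q)) = E(Q)$, with each edge strict or weak exactly as in $(Q,\tau)$, so the resulting labeled poset is $(Q,\tau)$ as required.

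The main obstacle I anticipate is verifying that Phase 2 can be carried out in a valid order, since Definition~\ref{def:rbd} emphasizes that the deletions form an \emph{ordered} sequence. The subtlety is that, during Phase 2, an intermediate diagram must still be interpretable, and one must check that no edge we wish to delete is ever needed to justify a later step. Since all additions happen in Phase 1 and the deletion step only requires removing edges (with no redundancy hypothesis on deleted edges), this should reduce to a straightforward bookkeeping argument: delete the edges of $E(P) \setminus E(Q)$ in any order, for instance from longest to shortest in $<_P$-rank. I would conclude by remarking that the cyclic implications are now complete, and that the construction makes transparent the earlier claim from Figure~\ref{fig:first_example} and Figure~\ref{fig:rbd} that redundancy-before-deletion is a strictly finer operation than plain edge deletion.
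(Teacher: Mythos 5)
Your proof is correct, and the $(2) \Rightarrow (3)$ direction is handled by a genuinely more economical construction than the paper's. The paper adds \emph{every} redundant edge to reach $\lessthan(P,\om)$, uses Lemmas~\ref{lem:delete_edges} and~\ref{lem:delete_covers} to peel down to $\lessthan(Q,\tau)$ one cover relation at a time (which is where ``ordered sequence'' in Definition~\ref{def:rbd} actually matters), and then strips the leftover redundant edges. Your route adds only $E(Q) \setminus E(P)$ (each such edge is redundant in $P$, being in $\lessthan(P,\om)$ by hypothesis~(2) but not in $E(P)$), then deletes $E(P) \setminus E(Q)$, landing exactly on $E(Q)$; the strict/weak designations match $\tau$ automatically since both posets' edges are classified by comparing the same labels. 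Because you never delete an edge that only \emph{became} a cover relation after earlier deletions, the ordering worry you flag at the end is vacuous for your construction and the tentative ``$<_P$-rank'' suggestion can be dropped: Definition~\ref{def:rbd} places no constraint on intermediate diagrams, so any order works. The $(1) \Rightarrow (2)$ and $(3) \Rightarrow (1)$ steps coincide with the paper's (the former being the paper's Lemma~\ref{lem:linext}). The upshot is that your argument sidesteps Lemmas~\ref{lem:delete_edges} and~\ref{lem:delete_covers} entirely and exhibits the minimal set of additions and deletions realizing redundancy-before-deletion, whereas the paper's route keeps every intermediate stage a genuine less-than set of a labeled poset, which supports the discussion in Remark~\ref{rem:deletions}.
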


Before starting the proof proper of Theorem~\ref{thm:rbd}, we need some preliminary lemmas.  The first is a general property of linear extensions which is easy to check.

\begin{lemma}\label{lem:linext}
For elements $a,b$ of a labeled poset $(P,\om)$, if $a \not\leq_P b$ then there is a linear extension in which $b$ appears before $a$.  
\end{lemma}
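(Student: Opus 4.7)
The plan is to give a direct constructive proof. The hypothesis $a \not\leq_P b$ splits into two cases: either $b <_P a$, or $a$ and $b$ are incomparable. In the first case there is nothing to do, since every linear extension of $(P,\om)$ places $b$ before $a$. So the real content is the incomparable case, and in fact a single construction will handle both cases uniformly.

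The construction is a two-phase topological sort. Let $I_b = \{x \in P : x \leq_P b\}$ be the principal order ideal generated by $b$. Note that $a \notin I_b$ precisely because $a \not\leq_P b$. I would first produce any linear extension $\sigma_1$ of the induced subposet on $I_b$, then any linear extension $\sigma_2$ of the induced subposet on $P \setminus I_b$, and take the concatenation $\sigma_1 \sigma_2$ (read as $\om$-labels).

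The only thing to verify is that this concatenation is genuinely a linear extension of $(P,\om)$, i.e.\ that no covering relation runs the ``wrong way.'' Relations internal to $I_b$ or internal to $P \setminus I_b$ are handled by the respective linear extensions. A relation $x <_P y$ with $x \in I_b$ and $y \in P \setminus I_b$ is fine because $x$ precedes $y$ in the concatenation. The remaining case, $x <_P y$ with $x \in P \setminus I_b$ and $y \in I_b$, cannot occur: from $y \leq_P b$ and transitivity we would get $x <_P b$, forcing $x \in I_b$, a contradiction. Since $b \in I_b$ appears in the first block while $a$ appears in the second, $b$ precedes $a$ as required. No step here is technically delicate; the ``main obstacle,'' such as it is, is simply picking the right order ideal to separate $b$ from $a$, after which the verification is routine.
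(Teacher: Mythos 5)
Your proof is correct, and it is the standard argument: the paper itself states this lemma without proof, calling it ``a general property of linear extensions which is easy to check.'' Splitting $P$ into the principal order ideal $I_b$ and its complement, taking linear extensions of each, and concatenating is exactly the routine verification the authors have in mind, and your check that no relation crosses from $P \setminus I_b$ into $I_b$ is the only non-trivial point and is handled correctly.
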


%


The main substance of the proof of Theorem~\ref{thm:rbd} is showing that (2)$\Rightarrow$(3).  The next two lemmas detail how the edges of $\lessthan(P,\om)$ can be pared down to get $\lessthan(Q,\tau)$ in a systematic way that is consistent with edge deletion and the poset structure.

\begin{lemma}\label{lem:delete_edges}
Suppose that $(P,\om)$ is a labeled poset that includes the relation $a <_P b$.  Then $\lessthan(P,\om) \setminus \{(a,b)\}$ is a less-than set of a labeled poset if and only if $a <_P b$ is a cover relation of $(P,\om)$.
\end{lemma}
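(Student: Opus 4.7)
The plan is to reduce the question to checking the strict-partial-order axioms. A less-than set of a labeled poset is, by definition, nothing more than a strict partial order on the label set $[|P|]$: conversely, any strict partial order on $[|P|]$ arises as the less-than set of some labeled poset (take the identity labeling on the underlying poset). So the statement I actually need to prove is that $S := \lessthan(P,\om) \setminus \{(a,b)\}$ is a strict partial order on $[|P|]$ if and only if $a \pcovered b$.

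I observe first that irreflexivity and antisymmetry are automatic: since $S \subseteq \lessthan(P,\om)$, any failure of these properties in $S$ would already have been a failure in $\lessthan(P,\om)$. So the entire content of the lemma sits in transitivity, and I plan to handle both implications through transitivity alone.

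For the forward direction, I argue by contrapositive. If $a <_P b$ is not a cover relation, pick $c$ with $a <_P c <_P b$. Since $c \ne a$ and $c \ne b$, both $(a,c)$ and $(c,b)$ lie in $S$, while $(a,b) \notin S$. This is an immediate violation of transitivity, so $S$ cannot be the less-than set of any labeled poset. For the reverse direction, suppose $a \pcovered b$ and take $(x,y),(y,z) \in S$. Then $x <_P y <_P z$ gives $(x,z) \in \lessthan(P,\om)$, and the only way $(x,z)$ could fail to be in $S$ is $(x,z) = (a,b)$; but that would force $y$ into the open interval $a <_P y <_P b$, contradicting the cover assumption. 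So $S$ is transitive and hence a strict partial order.

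There is no real obstacle here — the argument is a short axiom check once the correct reduction is in place. The only point worth being careful about is the equivalence between ``less-than sets of labeled posets'' and ``strict partial orders on $[|P|]$,'' which one should state explicitly at the start so that the rest of the proof is purely a verification that removing a single pair $(a,b)$ preserves transitivity exactly when no two-step chain through $a$ and $b$ exists in $(P,\om)$.
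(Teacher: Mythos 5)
Your proof is correct and takes essentially the same approach as the paper: both reduce the question to a transitivity check (noting irreflexivity/antisymmetry are inherited), use the intermediate element $c$ to break transitivity when $a<_P b$ is not a cover, and observe that for $(x,y),(y,z)$ in the reduced set the pair $(x,z)$ cannot equal $(a,b)$ because $(x,z)$ is not a cover relation.
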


\begin{proof}
Let us add in all the reflexive relations $(x,x)$ to the set $\lessthan(P,\om) \setminus \{(a,b)\}$ and consider whether or not the resulting set is a full set of relations of a poset.  Reflexivity and antisymmetry will certainly hold, so we check transitivity, in which case we can restrict our attention to $\lessthan(P,\om) \setminus \{(a,b)\}$ without the reflexive relations.

If $(a,b)$ is not a cover relation, then there exists $c$ such that $(a,c)$ and $(c,b)$ are in $\lessthan(P,\om) \setminus \{(a,b)\}$ but $(a,b)$ is not.  Thus $\lessthan(P,\om) \setminus \{(a,b)\}$ is not transitively closed and so is not the less-than set of a labeled poset.

If $(a,b)$ is a cover relation, consider $(x,y), (y,z) \in \lessthan(P,\om) \setminus \{(a,b)\}$.  We know that $(x,z) \in \lessthan(P,\om)$ and since $(x,z)$ is not a cover relation, we get  $(x,z) \in \lessthan(P,\om) \setminus \{(a,b)\}$.  Thus $\lessthan(P,\om) \setminus \{(a,b)\}$ is the less-than set of a labeled poset.  
\end{proof}

\begin{remark}\label{rem:deletions}
Note that repeated deletions of cover relations from less-than sets as in Lemma~\ref{lem:delete_edges} can result in posets that look quite different from the original.  For example, the less-than set of the poset $(Q,\tau)$ in Figure~\ref{fig:deletions} is obtained from the less-than set of $(P,\om)$ there in the following way:
\[
\lessthan(Q, \tau) =  \lessthan(P,\om) \setminus \{(2,6)\} \setminus \{(2,5)\} \setminus \{(4,5)\},
\]
where the deletions are performed from left-to-right. Note that the last relation deleted is not a cover relation in $\lessthan(P,\om)$ but is instead a cover relation once the previous deletion has been performed.  This is an example of why ordering the sequence of edge deletions is important.
\begin{figure}
\begin{center}
\begin{tikzpicture}[scale=0.6]
\begin{scope}
\begin{scope}
\tikzstyle{every node}=[draw, shape=circle, inner sep=2pt]; 
\draw (0,0) node (a3) {3};
\draw (0,1.5) node (a4) {4};
\draw (0,3.0) node (a2) {2};
\draw (-1.0,4.5) node (a5) {5};
\draw (1.5,3) node (a1) {1};
\draw (1.0,4.5) node (a6) {6};
\draw (a5) -- (a2) -- (a6);
\draw (a3) -- (a4);
\draw[double distance=2pt] (a4) -- (a2);
\draw[double distance=2pt] (a1) -- (a6);
\end{scope}
\begin{scope}
\draw (0,-1.2) node {$(P,\om)$};
\end{scope}
\end{scope}
\begin{scope}[xshift=40ex]
\begin{scope}
\tikzstyle{every node}=[draw, shape=circle, inner sep=2pt]; 
\draw (0,0) node (a3) {3};
\draw (0,1.5) node (a4) {4};
\draw (0,3.0) node (a2) {2};
\draw (-1.5,1.5) node (a5) {5};
\draw (1.5,1.5) node (a1) {1};
\draw (1.5,3.0) node (a6) {6};
\draw (a5) -- (a3) -- (a4) -- (a6);
\draw[double distance=2pt] (a4) -- (a2);
\draw[double distance=2pt] (a1) -- (a6);
\end{scope}
\begin{scope}
\draw (0,-1.2) node {$(Q,\tau)$};
\end{scope}
\end{scope}
\end{tikzpicture}
\end{center}
\caption{$(Q,\tau)$ is obtained from $(P,\om)$ by an ordered sequence of three edge deletions. }
\label{fig:deletions}
\end{figure}
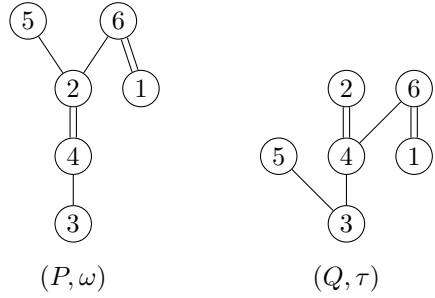
\end{remark}

Remark~\ref{rem:deletions} suggests that we need to be mindful when talking about deletions of edges from less-than sets, hence the need for the next lemma.

\begin{lemma}\label{lem:delete_covers}
For labeled posets $(P,\om)$ and $(Q,\tau)$, we have $\lessthan(P,\om) \supseteq \lessthan(Q,\tau)$ if and only if $\lessthan(Q,\tau)$ can be obtained from $\lessthan(P,\om)$ by an ordered sequence of deletions of cover relations.
\end{lemma}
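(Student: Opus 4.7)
The plan is to prove the two implications separately; the $(\Leftarrow)$ direction is immediate from Lemma~\ref{lem:delete_edges}, since each deletion in the sequence shrinks the less-than set, so composing them gives $\lessthan(P,\om) \supseteq \lessthan(Q,\tau)$. The substantive direction is $(\Rightarrow)$, for which I would induct on the cardinality of $D := \lessthan(P,\om) \setminus \lessthan(Q,\tau)$. The base case $|D|=0$ says that $(P,\om)$ and $(Q,\tau)$ have identical less-than sets (and identical labelings, since we are identifying elements with labels), so the empty sequence of deletions works.

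For the inductive step, the task is to locate some pair $(a,b)\in D$ that is a \emph{cover relation} of $(P,\om)$; once we have this, Lemma~\ref{lem:delete_edges} guarantees that removing $(a,b)$ from $\lessthan(P,\om)$ produces the less-than set of a labeled poset $(P',\om)$, whose strict/weak edge designations are inherited unchanged from $\om$. Since $\lessthan(P',\om) = \lessthan(P,\om)\setminus\{(a,b)\} \supseteq \lessthan(Q,\tau)$ with $|D'|=|D|-1$, induction produces the remaining sequence.

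To exhibit the required cover relation, I would choose $(a,b)\in D$ minimizing the cardinality of the open interval $(a,b)_P = \{c \in P : a <_P c <_P b\}$. If this interval is empty, $(a,b)$ is already a cover in $(P,\om)$ and we are done. Otherwise, pick any $c \in (a,b)_P$. In $(Q,\tau)$ we cannot have both $a \leq_Q c$ and $c \leq_Q b$, for transitivity would then give $a \leq_Q b$, contradicting $(a,b)\notin \lessthan(Q,\tau)$. Hence at least one of $(a,c), (c,b)$ lies in $D$; suppose without loss of generality that $(a,c)\in D$. Any $x \in (a,c)_P$ satisfies $a <_P x <_P c <_P b$, so $x \in (a,b)_P$; together with $c \in (a,b)_P \setminus (a,c)_P$, this gives $(a,c)_P \subsetneq (a,b)_P$, contradicting the minimal choice of $(a,b)$.

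The conceptual obstacle is exactly this step: the most obvious attempt, ``delete the relations of $D$ in any order,'' fails because Lemma~\ref{lem:delete_edges} only lets us delete cover relations, and Remark~\ref{rem:deletions} already warns us that a pair in $D$ need not be a cover of $(P,\om)$ even though it becomes one after earlier deletions. The minimal-open-interval argument neatly bypasses this by showing that the ``innermost'' obstruction in $D$ is always currently a cover, producing a valid deletion order inductively.
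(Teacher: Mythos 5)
Your proof is correct and follows the same overall strategy as the paper: locate a cover relation of the current poset that lies in $D := \lessthan(P,\om)\setminus\lessthan(Q,\tau)$, delete it via Lemma~\ref{lem:delete_edges}, and recurse. Where you differ is in how you establish the existence of such a cover. You run an extremal argument---pick $(a,b)\in D$ with $(a,b)_P$ of minimal cardinality, then show a witness $c$ to non-covering would produce a strictly smaller interval in $D$. The paper instead invokes a one-line observation: if every cover relation of $(P,\om)$ lay in $\lessthan(Q,\tau)$, then since $\lessthan(Q,\tau)$ is transitively closed and the covers of $P$ generate all of $\lessthan(P,\om)$ under transitivity, we would get $\lessthan(P,\om)\subseteq\lessthan(Q,\tau)$, contradicting the strict containment. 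Your argument is a sound, self-contained alternative that makes the inductive progress very explicit, but it is somewhat heavier machinery than needed; the paper's generation observation gets the existence immediately. One cosmetic caution: your phrase ``without loss of generality that $(a,c)\in D$'' is not a genuine symmetry, since $a$ and $b$ are not interchangeable---rather the two cases $(a,c)\in D$ and $(c,b)\in D$ need (nearly identical but separate) one-line checks that the resulting open interval is a proper subset of $(a,b)_P$, which you should spell out rather than call a WLOG.
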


\begin{proof}
Since the ``if'' direction is clear, we suppose $\lessthan(P,\om) \supset \lessthan(Q,\tau)$.  Because of this strict inequality and since cover relations generate the entire less-than set, there must exist a cover relation $a <_P b$ of $(P,\om)$ that is not an element of $\lessthan(Q,\tau)$.  Deleting this cover relation as in Lemma~\ref{lem:delete_edges} will yield a labeled poset $(P',\om')$ that has one less order relation than $(P,\om)$, and so $\lessthan(P',\om') \supseteq \lessthan(Q,\tau)$.  Repeated applications of this deletion process will result in $\lessthan(Q,\tau)$.
\end{proof}

\begin{proof}[Proof of Theorem~\ref{thm:rbd}]
We will prove (1)$\Rightarrow$(2)$\Rightarrow$(3)$\Rightarrow$(1).

To show (1)$\Rightarrow$(2), assume that $\L(P,\om) \subseteq \L(Q,\tau)$ and suppose that there exists $(a,b) \in \lessthan(Q,\tau)$ such that $(a,b) \not\in \lessthan(P,\om)$.  By Lemma~\ref{lem:linext}, there exists a linear extension $\pi \in \L(P,\om)$ in which $b$ appears before $a$.  But then $\pi \in  \L(Q,\tau)$, which implies $(a,b) \not\in \lessthan(Q,\tau)$, a contradiction.

To show (2)$\Rightarrow$(3), start with the Hasse diagram for $(P,\om)$ and add in every possible redundant edge; the resulting set of edges will be  $\lessthan(P,\om)$.  Since $\lessthan(P,\om) \supseteq \lessthan(Q,\tau)$, we can apply Lemma~\ref{lem:delete_covers}: $\lessthan(Q,\tau)$ can be obtained from $\lessthan(P,\om)$ using an ordered sequence of edge deletions.  Then simply delete from $\lessthan(Q,\tau)$ all redundant edges in any order to obtain the Hasse diagram for $(Q,\tau)$.  In summary, the Hasse diagram $(Q,\tau)$ is obtained from $(P,\om)$ by adding redundant edges followed by an ordered sequence of deletions of edges, as required.

We have already mentioned the reasons for (3)$\Rightarrow$(1): adding redundant edges to the Hasse diagram of $(P,\om)$ doesn't change the set of linear extensions, nor does deleting redundant edges, while deleting cover relations adds elements to the set of linear extensions.   Thus $\L(P,\om) \subseteq \L(Q,\tau)$.
\end{proof}

\subsection{Adding redundant edges more freely}\label{sub:freedom}

In redundancy-before-deletion, the strictness or weakness of the redundant edges added to $(P,\om)$ is determined by the labeling $\om$.  For example, the redundant edge added in Figure~\ref{fig:rbd}(b) is weak as dictated by the labels 1 and 2.  However, particularly in Section~\ref{sec:greene}, we want the freedom to deal with labeled posets where instead of a labeling $\om$ being displayed, we only have some assignment of strict and weak edges consistent with an underlying $\om$.  In this situation, are there any restrictions on the strictness and weakness of the redundant edges we add?  For example, are the strictness and weakness of the redundant edges in the examples in Figure~\ref{fig:valid} valid, in the sense that they are consistent with an underlying labeling? 
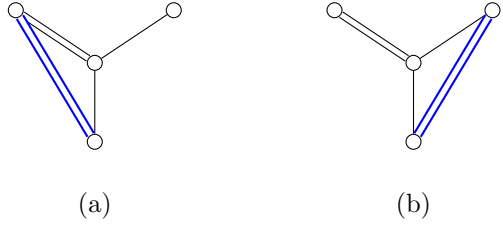
\begin{figure}
\begin{center}
\begin{tikzpicture}[scale=0.7]
\begin{scope}
\begin{scope}
\tikzstyle{every node}=[draw, shape=circle, inner sep=2pt]; 
\draw (0,0) node (a1) {};
\draw (0,1.5) node (a3) {};
\draw (-1.5,2.5) node (a2) {};
\draw (1.5,2.5) node (a4) {};
\draw (a1) -- (a3) -- (a4);
\draw[double distance=2pt] (a3) -- (a2);
\draw[double distance=2pt, thick, color=blue] (a1) -- (a2);
\end{scope}
\begin{scope}
\draw (0,-1.2) node {(a)};
\end{scope}
\end{scope}
\begin{scope}[xshift=40ex]
\begin{scope}
\tikzstyle{every node}=[draw, shape=circle, inner sep=2pt]; 
\draw (0,0) node (a1) {};
\draw (0,1.5) node (a3) {};
\draw (-1.5,2.5) node (a2) {};
\draw (1.5,2.5) node (a4) {};
\draw (a1) -- (a3) -- (a4);
\draw[double distance=2pt] (a3) -- (a2);
\draw[double distance=2pt, thick, color=blue]  (a1) -- (a4);
\end{scope}
\begin{scope}
\draw (0,-1.2) node {(b)};
\end{scope}
\end{scope}
\end{tikzpicture}
\caption{Do these assignments of strict and weak edges come from an underlying labeling?  (The answers are ``yes'' for (a) but ``no'' for (b).)}
\label{fig:valid}
\end{center}
\end{figure}

To state the restriction on the strictness and weakness of redundant edges, we need to define a \emph{bad cycle}.  Given a Hasse diagram of a poset along with some redundant edges, and an assignment of strictness and weakness to all the edges, build a directed graph by orienting all weak edges upwards and all strict edges downwards.  If the resulting digraph has a directed cycle, then we say that the assignment of strict and weak edges has a \emph{bad cycle}.  The reader is invited to construct these digraphs for (a) and (b) in Figure~\ref{fig:valid} and to refer to these digraphs in the proof of Proposition~\ref{pro:badcycles}

\begin{proposition}\label{pro:badcycles}
Given the Hasse diagram of a poset along with some redundant edges, an assignment of strict and weak edges is consistent with an underlying labeling $\om$ if and only if the assignment has no bad cycles.
\end{proposition}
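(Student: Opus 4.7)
The plan is to unpack the digraph construction as a direct translation of the labeling condition, and then invoke the standard equivalence that a finite directed graph admits a topological ordering of its vertices if and only if it contains no directed cycle. The key observation is that orienting a weak edge $a <_P b$ upward (as arc $a \to b$) and a strict edge $a <_P b$ downward (as arc $b \to a$) is precisely the convention that every arc points from the endpoint with smaller $\om$-label to the endpoint with larger $\om$-label.

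For the forward direction, suppose the displayed strict/weak pattern arises from some labeling $\om$. By the observation above, every arc $u \to v$ of the digraph satisfies $\om(u) < \om(v)$. Any directed cycle $v_1 \to v_2 \to \cdots \to v_k \to v_1$ would force $\om(v_1) < \om(v_2) < \cdots < \om(v_k) < \om(v_1)$, which is impossible, so the digraph has no bad cycles.

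For the backward direction, suppose the digraph has no bad cycles. Since it is acyclic on the finite vertex set $P$, it admits a topological ordering $v_1, \ldots, v_n$; set $\om(v_i) = i$, which is a bijection $P \to [\vert P \vert]$. For every arc $u \to v$, the ordering forces $\om(u) < \om(v)$, so each weak edge $a <_P b$ has $\om(a) < \om(b)$ and each strict edge $a <_P b$ has $\om(a) > \om(b)$, matching the prescribed designation on each displayed edge. I expect the only subtle point is confirming that ``consistent with an underlying labeling'' refers only to the displayed Hasse and redundant edges — order relations of $P$ that are not drawn impose no additional constraints on $\om$ — after which the argument is simply the standard DAG characterization applied to this particular digraph.
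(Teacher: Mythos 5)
Your proof is correct and takes essentially the same approach as the paper's. The forward direction is identical, and for the converse the paper constructs the reachability partial order $R$ of the acyclic digraph $D$ and then takes a natural labeling of $R$ --- which is precisely the topological ordering you invoke directly --- so the two arguments differ only in phrasing.
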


\begin{proof}
Let $D$ denote the digraph associated with an assignment of strict and weak edges to a poset $P$.

Suppose first that $D$ has a directed cycle $a_1 \to a_2 \to \cdots \to a_k \to a_1$, and that the assignment of strict and weak edges comes from a labeling $\om$.  Notice that the digraph is defined so that its edges always point to the higher $\om$-label.  Thus $\om(a_1) < \om(a_2) < \ldots < \om(a_k) < \om(a_1)$, a contradiction.

For the converse, we know $D$ is acyclic, and we will explicitly define a labeling $\om$ of the poset $P$.  Define a new partial order $R$ on the elements of $P$ by saying $a \leq_R b$ if there is a directed path in $D$ from $a$ to $b$.  We see that $R$ is a partial order, so it can be given a natural labeling $\om$.  These $\om$-labels of the elements of $P$ respect the assignment of strict and weak edges.  Indeed, if the edge $a <_P b$ is strict, then $b <_R a$, so $\om(b) < \om(a)$, and similarly for weak edges.
\end{proof}

So given a labeled poset $(P,\om)$ we need not just add redundant edges whose weakness/strictness is determined by the labeling $\om$, but we can more generally add any redundant edges whose weakness/strictness does not create a bad cycle.  We call the addition of the latter type of edges the \emph{generalized-redundancy} operation, and use the term \emph{generalized-redundancy-before-deletion} for the resulting generalization of redundancy-before-deletion.

We would like to develop an analogue of Theorem~\ref{thm:rbd} for generalized redundancy.  As we will see, the crux of the matter is that generalized-redundancy-before-deletion applied to $(P,\om)$ is equivalent to regular redundancy-before-deletion applied to an appropriately relabeled $(P,\om)$.  A relabeling $\om'$ of a labeled poset $(P,\om)$ is said to be \emph{consistent} if the Hasse diagram of $(P,\om')$ has the same set of weak (resp.\ strict) edges as the Hasse diagram of $(P,\om)$. Importantly, $K_{(P,\om)} = K_{(P,\om')}$ since, as we observed, Definition~\ref{def:popartition} is unchanged if we replace $a <_P b$ with $a \prec_P b$.  Consequently, Theorem~\ref{thm:rbd2} can be applied to questions of $F$-positivity, as we do repeatedly in Section~\ref{sec:greene}.

\begin{theorem}\label{thm:rbd2}
Let $(P,\om)$ and $(Q,\tau)$ be labeled posets.  The following are equivalent:
\begin{enumerate}
\renewcommand{\theenumi}{\arabic{enumi}}
\item $\L(P,\om') \subseteq \L(Q,\tau)$ for some consistent relabeling $\om'$ of $(P, \om)$;
\item $\lessthan(P,\om') \supseteq \lessthan(Q,\tau)$ for some consistent relabeling $\om'$ of $(P, \om)$;
\item $(Q,\tau)$ is obtained from $(P,\om)$ by generalized-redundancy-before-deletion.
\end{enumerate}
\end{theorem}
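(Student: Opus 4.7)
The plan is to reduce Theorem~\ref{thm:rbd2} to Theorem~\ref{thm:rbd} by using Proposition~\ref{pro:badcycles} as the bridge between consistent relabelings and valid generalized-redundancy assignments. The key observation is that a consistent relabeling $\om'$ of $(P,\om)$ is essentially the same data as a strict/weak assignment to every order relation of $P$ that agrees with $\om$ on cover edges and contains no bad cycles.

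For (1) $\Leftrightarrow$ (2), fix any consistent relabeling $\om'$ and apply Theorem~\ref{thm:rbd} to the pair $(P,\om')$ and $(Q,\tau)$: the two conditions hold for that particular $\om'$ simultaneously. Taking the disjunction over all consistent relabelings yields the equivalence.

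For (3) $\Rightarrow$ (2), suppose $(Q,\tau)$ is obtained from $(P,\om)$ by generalized-redundancy-before-deletion, and let $R$ denote the set of added redundant edges with their chosen strict/weak designations. By hypothesis, the combined diagram consisting of the Hasse edges of $(P,\om)$ together with $R$ has no bad cycles, so Proposition~\ref{pro:badcycles} produces a labeling $\om'$ of $P$ realizing these designations. Since the cover-edge designations of $(P,\om')$ match those of $(P,\om)$, the labeling $\om'$ is consistent, and the entire procedure becomes ordinary redundancy-before-deletion from $(P,\om')$ to $(Q,\tau)$. Theorem~\ref{thm:rbd} then yields $\lessthan(P,\om') \supseteq \lessthan(Q,\tau)$, as desired.

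For (2) $\Rightarrow$ (3), pick a consistent relabeling $\om'$ witnessing (2) and apply Theorem~\ref{thm:rbd} to obtain an ordinary redundancy-before-deletion sequence from $(P,\om')$ to $(Q,\tau)$. The strict/weak assignment on each redundant edge added in this sequence is dictated by the actual labeling $\om'$, and any labeling-induced assignment has no bad cycles by the easy forward direction of Proposition~\ref{pro:badcycles}. Since $(P,\om)$ and $(P,\om')$ share their Hasse diagram with identical strict/weak designations on cover edges, the same sequence of operations is a valid generalized-redundancy-before-deletion starting from $(P,\om)$. The main subtlety to watch for is ensuring that Proposition~\ref{pro:badcycles} can be invoked with the cover-edge designations already prescribed by $\om$ while leaving the redundant-edge designations free; fortunately the proposition is stated in exactly that generality, so the passage from ``no bad cycles'' to an actual labeling $\om'$ consistent with $\om$ is immediate, and the whole argument goes through by appealing once to each of Theorem~\ref{thm:rbd} and Proposition~\ref{pro:badcycles}.
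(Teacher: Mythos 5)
Your proof is correct and takes essentially the same route as the paper: both reduce to Theorem~\ref{thm:rbd} by showing that generalized-redundancy-before-deletion from $(P,\om)$ is the same thing as ordinary redundancy-before-deletion from $(P,\om')$ for some consistent relabeling $\om'$, with Proposition~\ref{pro:badcycles} providing the translation between ``no bad cycles'' and the existence of such an $\om'$. Your splitting into (1)$\Leftrightarrow$(2), (3)$\Rightarrow$(2), (2)$\Rightarrow$(3) is just a mild reorganization of the paper's argument, which first establishes (3)$\Leftrightarrow$(3$'$) and then invokes Theorem~\ref{thm:rbd}.
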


\begin{proof}
We will show that (3) is equivalent to the following statement:
\begin{enumerate}
\item[$(3')$]$(Q,\tau)$ is obtained from $(P,\om')$ by (regular) redundancy-before-deletion for some consistent relabeling $\om'$ of $(P, \om)$. 
\end{enumerate}
Then the result will follow by applying Theorem~\ref{thm:rbd} with $\om'$ in place of $\om$.  

We need to show that both the process of $(3)$ and that of ($3'$) can arrive at the same possibilities for $(Q,\tau)$ when starting at a given $(P,\om)$.  Temporarily ignoring labels and weakness/strictness, we note that whether we apply the process of (3) or ($3'$), we begin by adding some set of redundant edges to $P$.  And (3) and ($3'$) can certainly mimic each other in the edges they add to $P$.  We have two ways to designate the weakness/strictness of the redundant edges: following (3), we can designate in any way that does not create a bad cycle, or, following ($3'$), we can designate according to a labeling $\om'$. 

If we start with $(P,\om)$ and follow (3) in adding redundant edges, by Proposition~\ref{pro:badcycles}, the same designation of weakness/strictness can be achieved by following ($3'$) using an appropriate relabeling $\om'$.  We know that $\om'$ must be a consistent labeling of $(P,\om)$ because adding edges as in (3) does not change the weakness/strictness of the cover relations of $(P,\om)$. 

If we instead follow ($3'$) in adding edges and designate weakness/strictness according to the labeling $\om'$, we will not introduce any bad cycles, again by Proposition~\ref{pro:badcycles}.  Therefore, any designation obtained by following ($3'$) can be mimicked by following the generalized redundancy of (3).

In summary, after adding redundant edges, both (3) and ($3'$) can arrive at the same collection of weak (resp.\ strict) redundant edges and cover relations of $(P,\om)$.  Since deletion for (3) and ($3'$) are performed in exactly the same way, they can certainly continue to  mimic each other and arrive at the same $(Q,\tau)$.  
\end{proof}

\section{Combining posets}\label{sec:ur}

In Section 4, we started with a single labeled poset $(P,\om)$ and performed operations to obtain another labeled poset $(Q,\tau)$.  In this section, we want to consider the situation where we start with a pair (or multiple pairs) of posets $(P,\om)$ and $(Q,\tau)$ such that $(P,\om) \leqF (Q,\tau)$ and ask what operations can we perform on both posets to preserve the $F$-positivity.   

Operations that combine posets $P$ and $Q$ include the disjoint union $P+Q$, the ordinal sum $P \oplus Q$, and the ordinal product $P \otimes Q$; for definitions, see \cite[Sec.~3.2]{ec1e2}.  We consider an operation that is a common generalization of all three of these operations, defined in \cite{BHK18+,BHK18} as the ``Ur-operation.''  We will refer to the operation as ``poset assembly.''

\subsection{Poset assembly}

At this point, it will make things clearer if we slightly loosen our convention of identifying poset elements with their labels; specifically, the letters $p$ and $q$ used below refer to specific elements of posets rather than positive integers. 

\begin{definition}[\cite{BHK18+,BHK18}]\label{def:ur}
For a labeled poset $(\P,\om)$ and a sequence of posets $(P_1, \ldots, P_{\vert \P}\vert )$ on disjoint sets, we define the \emph{assembled poset} $\urp$ to be the disjoint union $\bigcup_{i=1}^{\vert \P\vert } P_i$ with the following order relation:
\[
\mbox{for $p \in P_j$ and $q\in P_k$, we have $p \leq q$ when} \left\{ \begin{array}{ll} 
j <_{\P} k  & \mbox{\ if\ } j\neq k \\
p \leq_{P_j} q & \mbox{\ if\ } j=k 
\end{array} 
\right..
\]
\end{definition}

An example of $\urp$ is shown in Figure~\ref{fig:ur}, with the labeling explained next.  We refer to $\P$ as the \emph{framework poset} and call the $P_i$ the \emph{component posets}.  Roughly speaking, the assembled poset is obtained by replacing the element of $\P$ labeled $i$  with $P_i$ for all $i$.

\begin{figure}[htbp]
\begin{center}
\begin{tikzpicture}
\begin{scope}[scale=0.8]
\begin{scope}[xshift=-30 ex]
\begin{scope}[draw, shape=circle, inner sep=2pt]
\node at (-1,0) [draw,circle] (P1) {$1$};
\node at (0,1) [draw,circle] (P2) {$2$};
\node at (1,0) [draw,circle] (P3) {$3$};
\draw[double distance=2pt] (P3)--(P2);
\end{scope}
\node at (0,-0.8) {$(\P,\om)$};
\draw (P1)--(P2);
\end{scope}
\begin{scope}
\begin{scope}[draw, shape=circle, inner sep=2pt]
\node at (0,2) [draw,circle] (P2) {$2$}; 
\node at (-0.7,1) [draw,circle] (P1) {$1$};
\node at (0.7,1) [draw,circle] (P3) {$3$};
\node at (0,0) [draw,circle] (P4) {$4$};
\draw[double distance=2pt] (P3)--(P2);
\end{scope}
\node at (0,-0.8) {$(P_1,\om_1)$};
\draw (P1)--(P2);
\draw[double distance=2pt] (P4)--(P3);
\draw[double distance=2pt] (P4)--(P1);
\end{scope}
\begin{scope}[xshift=20ex]
\begin{scope}[draw, shape=circle, inner sep=2pt]
\node at (0,0) [draw,circle] (P3) {$3$};
\node at (-0.7, 1) [draw,circle] (P1) {$1$};
\node at (0.7,1) [draw,circle] (P4) {$4$};
\node at (-0.7,2) [draw,circle] (P2) {$2$};
\draw (P3)--(P4);
\draw[double distance=2pt] (P3)--(P1);
\draw (P1)--(P2);
\end{scope}
\node at (0,-0.8) {$(P_2,\om_2)$};
\end{scope}
\begin{scope}[xshift=35ex]
\begin{scope}[draw, shape=circle, inner sep=2pt]
\node at (0,0) [draw,circle] (P1) {$1$};
\node at (1,0) [draw,circle] (P3) {$3$};
\node at (0,1) [draw,circle] (P2) {$2$};
\node at (1,1) [draw,circle] (P4) {$4$};
\draw (P1)--(P4);
\draw (P1)--(P2);
\draw (P3)--(P4);
\draw[double distance=2pt] (P3)--(P2);
\end{scope}
\node at (0.5,-0.8) {$(P_3,\om_3)$};
\end{scope}
\end{scope}

\begin{scope}[scale=1.0,yshift=-45ex, shape=circle, inner sep=1pt]
\node at (0,3) [draw] (P23) {2,3};
\node at (-0.7,4) [draw] (P21) {2,1};
\node at (0.7,4) [draw] (P24) {2,4};
\node at (-0.7,5) [draw] (P22) {2,2};
\draw (P23)--(P24);
\draw[double distance=2pt] (P23)--(P21);
\draw (P21)--(P22);

\node at (0-1,2) [draw] (P12) {1,2}; 
\node at (-0.7-1,1) [draw] (P11) {1,1};
\node at (0.7-1,1) [draw] (P13) {1,3};
\node at (0-1,0) [draw] (P14) {1,4};
\draw[double distance=2pt] (P13)--(P12);
\draw (P11)--(P12);
\draw[double distance=2pt] (P14)--(P13);
\draw[double distance=2pt] (P14)--(P11);

\node at (0+1.3,0) [draw] (P31) {3,1};
\node at (1+1.3,0) [draw] (P33) {3,3};
\node at (0+1.3,1) [draw] (P32) {3,2};
\node at (1+1.3,1) [draw] (P34) {3,4};
\draw (P31)--(P34);
\draw (P31)--(P32);
\draw (P33)--(P34);
\draw[double distance=2pt] (P33)--(P32);

\draw[double distance=2pt] (P32)--(P23);
\draw[double distance=2pt] (P34)--(P23);
\draw (P12)--(P23);
\draw (P12)--(P23);

\end{scope}
\end{tikzpicture}
\caption{A framework labeled poset $(\P,\om)$, three component labeled posets, and the resulting assembled poset $\urpom$ with the inherited labeling.}
\label{fig:ur}
\end{center}
\end{figure}
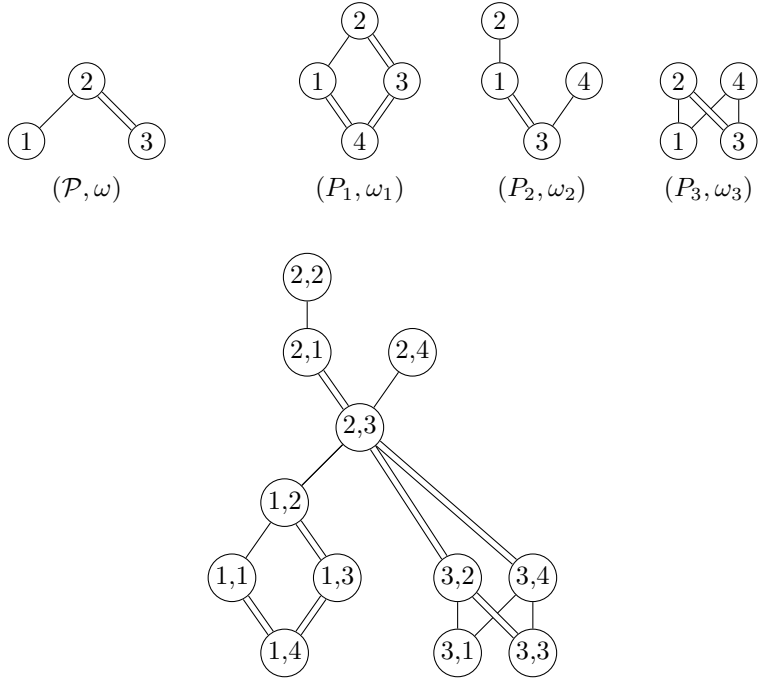

\begin{definition}\label{def:inherited}
For a framework labeled poset $(\P,\om)$ and a sequence of component labeled posets $((P_1,\om_1),\ldots,(P_{\vert \P\vert },\om_{\vert \P\vert }))$, we define the \emph{inherited labeling} $\Om$ of $\urp$ as follows: 
\[
\text{for } p \in P_k,~\Om(p) = ( k,  \om_k(p)).
\]
\end{definition}

See Figure~\ref{fig:ur} for an example, where we drop the parentheses around the inherited labels to allow for a more compact figure. We totally order the inherited labels according to lexicographic order, allowing us to refer to weak and strict edges in the assembled poset $\urpom$.  Note that the inherited labeling is defined so that it preserves the weak and strict edges within each component poset, and that the weakness/strictness between the components matches that in $(P,\om)$.  Since our assembled posets $\urp$ will always be labeled by the inherited labeling, we will abbreviate $\urpom$ as $\urp$.

Theorem~\ref{thm:kexpansion} can be used to compute $\kurp$: we can construct the linear extensions $\pi$ of $\urp$ as sequences of pairs of positive integers, and define the descents of $\pi$ using lexicographic order.  For example, one linear extension of 
$\urp$ from Figure~\ref{fig:ur} is 
\begin{equation}\label{equ:ur_lin_ext}
 \pi = ((1,4), (1,1), (3,1), (1,3), (3, 3), (1,2), (3,2), (3,4), (2,3), (2,4), (2,1), (2,2))
\end{equation}
which has descent composition $122322$.  As would be expected, we will use $\L(\urp)$ to denote the set of linear extensions of an assembled poset $\urp$.  

\subsection{Poset assembly preserves linear extension containment}

Following the approach of Section~\ref{sec:hasse}, Theorem~\ref{thm:kexpansion} tells us that two assembled posets $\urp$ and $\urq$ with inherited labelings will satisfy $\urp \leqF \urq$ if $\L(\urp) \subseteq \L(\urq)$.  Our first result about poset assembly is that linear extension containment is preserved in full generality.  

The proof will make use of the less-than set of $\urp$ which, by Definitions~\ref{def:ur} and~\ref{def:inherited}, is given by: 
$((j_1, k_i) , (j_2, k_2)) \in \lessthan(\urp)$ if and only if $(j_1, j_2) \in \lessthan(\P)$ or both $j_1 =j_2$ and $(k_1, k_2) \in \lessthan(P_{j_1})$.

\begin{theorem}\label{thm:ur_lin_ext}
Consider labeled posets $(\P,\om)$ and $(\CQ,\tau)$ such that $\L(\P,\om) \subseteq \L(\CQ,\tau)$.   If sequences $((P_1,\om_1),\ldots,(P_{\vert \P}\vert ,\om_{\vert \P}\vert ))$ and $((Q_1,\tau_1),\ldots,(Q_{\vert \P}\vert ,\tau_{\vert \P}\vert ))$ of labeled posets satisfy $\L(P_r,\om_r) \subseteq \L(Q_r,\tau_r)$ for all $r$, then 
\[
\L(\urp) \subseteq \L(\urq).
\]
\end{theorem}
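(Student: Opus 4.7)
The plan is to route everything through Theorem~\ref{thm:rbd}, which tells us that linear extension containment is equivalent to reverse containment of less-than sets. This reduces the theorem to a purely set-theoretic statement about the order relations of the assembled posets, which can be verified directly from Definition~\ref{def:ur}.

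First I would translate both the hypotheses and the conclusion into the language of less-than sets. By Theorem~\ref{thm:rbd}, $\L(\P,\om) \subseteq \L(\CQ,\tau)$ is equivalent to $\lessthan(\P,\om) \supseteq \lessthan(\CQ,\tau)$, and for each $r$, $\L(P_r,\om_r) \subseteq \L(Q_r,\tau_r)$ is equivalent to $\lessthan(P_r,\om_r) \supseteq \lessthan(Q_r,\tau_r)$. Applying Theorem~\ref{thm:rbd} a third time, it suffices to prove
\[
\lessthan(\urp) \supseteq \lessthan(\urq).
\]

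Next I would use the description of the less-than set of an assembled poset recorded in the paragraph immediately preceding the theorem: $((j_1,k_1),(j_2,k_2)) \in \lessthan(\urp)$ if and only if either $(j_1,j_2) \in \lessthan(\P,\om)$, or else $j_1 = j_2$ and $(k_1,k_2) \in \lessthan(P_{j_1},\om_{j_1})$; and analogously for $\urq$. (This dichotomy is immediate from Definition~\ref{def:ur}.) Then for an arbitrary element $((j_1,k_1),(j_2,k_2))$ of $\lessthan(\urq)$, I would split into the two cases provided by this dichotomy. In the first case, $(j_1,j_2) \in \lessthan(\CQ,\tau) \subseteq \lessthan(\P,\om)$, so the relation lies in $\lessthan(\urp)$. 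In the second case, $j_1 = j_2$ and $(k_1,k_2) \in \lessthan(Q_{j_1},\tau_{j_1}) \subseteq \lessthan(P_{j_1},\om_{j_1})$, and again the relation lies in $\lessthan(\urp)$.

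I do not expect a real obstacle here: once one notices that the less-than set of an assembled poset decomposes cleanly along the two types of relations (between components and within a component), the conclusion is a one-line case split. The only place where care is needed is confirming that the decomposition of $\lessthan(\urp)$ is exactly as described, but this is a direct reading of Definition~\ref{def:ur}. Invoking Theorem~\ref{thm:rbd} at both ends lets the proof bypass any direct manipulation of permutations or descent sets, which would otherwise be notationally awkward because linear extensions of $\urp$ can interleave elements from components $P_j$ and $P_k$ whenever $j$ and $k$ are incomparable in $\P$.
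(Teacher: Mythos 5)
Your proposal is correct and follows essentially the same route as the paper's proof: translate all containments into less-than-set containments via Theorem~\ref{thm:rbd}, then case-split on whether the two first coordinates agree. The paper's proof is, line for line, the argument you describe.
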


\begin{proof}
By Theorem~\ref{thm:rbd}, we know $\lessthan(\P,\om) \supseteq \lessthan(\CQ,\tau)$ and $\lessthan(P_r,\om_r) \supseteq \lessthan(Q_r,\tau_r)$ for all $r$, and we wish to show that $\lessthan(\urp) \supseteq \lessthan(\urq)$.

Let $((j_1, k_i) , (j_2, k_2)) \in \lessthan(\urq)$.  There are two cases to consider. \linebreak If $j_1 \neq j_2$, then $(j_1, j_2) \in \lessthan(\CQ,\tau)$, and hence $(j_1, j_2) \in \lessthan(\P,\om)$, implying \linebreak $((j_1, k_i) , (j_2, k_2)) \in \lessthan(\urp)$.  If $j_1 = j_2$, then $(k_1, k_2) \in \lessthan(Q_{j_1},\tau_{j_1})$, and hence $(k_1, k_2) \in \lessthan(P_{j_1},\om_{j_1})$, implying $((j_1, k_i) , (j_2, k_2)) \in \lessthan(\urp)$.
\end{proof}

\subsection{Poset assembly and \texorpdfstring{$F$}{F}-positivity}\label{sub:ur_positivity}

Having shown that poset assembly preserves linear extension containment, we next ask whether it preserves $F$-positivity.  This is not always the case; for example, see Figure~\ref{fig:ur_counterexample}.  We have $(\P,\om) \leqF (\CQ,\tau)$ but it is not that case that $\P[i \to P_i] \leqF \CQ[i \to P_i]$.
\begin{figure}[htbp]
\begin{center}
\begin{tikzpicture}
\begin{scope}[xshift=0ex]
\begin{scope}[draw, shape=circle, inner sep=2pt]
\node at (0,0) [draw,circle] (P3) {$2$};
\node at (-0.7, 1) [draw,circle] (P2) {$4$};
\node at (0.7,1) [draw,circle] (P4) {$1$};
\node at (0,2) [draw,circle] (P1) {$3$};
\draw (P4)--(P1);
\draw (P2) -- (P3);
\draw[double distance=2pt] (P1)--(P2);
\end{scope}
\node at (0,-0.8) {$(\P,\om)$};
\end{scope}
\begin{scope}[xshift=15ex]
\begin{scope}[draw, shape=circle, inner sep=2pt]
\node at (0,0) [draw,circle] (P2) {$3$};
\node at (1,0) [draw,circle] (P4) {$1$};
\node at (0,1) [draw,circle] (P1) {$2$};
\node at (1,1) [draw,circle] (P3) {$4$};
\draw (P2)--(P3) -- (P4) -- (P1);
\draw[double distance=2pt] (P2)--(P1);
\end{scope}
\node at (0.5,-0.8) {$(\CQ, \tau)$};
\end{scope}
\begin{scope}[xshift=40ex]
\begin{scope}[draw, shape=circle, inner sep=2pt]
\node at (0,0) [draw,circle] (P1) {$3$};
\node at (1,0) [draw,circle] (P2) {$1$};
\node at (1,1) [draw,circle] (P3) {$2$};
\draw (P2)--(P3);
\end{scope}
\node at (0.5,-0.8) {$(P_i, \om_i)$ for all $i$};
\end{scope}
\end{tikzpicture}
\end{center}
\caption{We have $(\P,\om) \leqF (\CQ,\tau)$ but $\P[i \to P_i] \not\leqF \CQ[i \to P_i]$. }
\label{fig:ur_counterexample}
\end{figure}
In this example it is even the case that $\Fsupp\left(\P[i \to P_i]\right) \not\subseteq \Fsupp\left(\CQ[i\to P_i]\right)$: the poset $\P[i\to P_i]$ has a linear extension 
\[ \left((1,1),(2,1),(2,2),(2,3),(4,1),(4,3),(4,2),(1,3),(1,2),(3,1),(3,2),(3,3)\right),\]
contributing $6114$ to the $F$-support of $\P[i\to P_i]$. As for $\CQ[i\to P_i]$, it is not difficult to check that a linear extension beginning with five ascents cannot be followed by three consecutive descents, so $6114$ is not in the $F$-support of $\CQ[i\to P_i]$.

We can achieve $F$-positivity preservation if we strengthen our hypotheses to a mixture of linear extension containment and $F$-positivity.  This brings us to our second result on poset assembly, whose proof is considerably more technical than that of Theorem~\ref{thm:ur_lin_ext}.

\begin{theorem}\label{thm:ur_fpositivity}
Consider labeled posets $(\P,\om)$ and $(\CQ,\tau)$ such that $\L(\P,\om) \subseteq \L(\CQ,\tau)$.
If sequences $((P_1,\om_1),\ldots,(P_{\vert \P\vert },\om_{\vert \P\vert }))$ and $((Q_1,\tau_1),\ldots,(Q_{\vert \P\vert },\tau_{\vert \P\vert }))$ of labeled posets satisfy $(P_r,\om_r) \leqF (Q_r,\tau_r)$ for all $r$, then 
\[
\urp \leqF \urq.
\]
\end{theorem}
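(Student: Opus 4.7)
The plan is to prove $\urp \leqF \urq$ by transitivity through the intermediate labeled poset $\CQ[i \to P_i]$, i.e., by establishing
\[\urp \;\leqF\; \CQ[i \to P_i] \;\leqF\; \urq.\]

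For the first inequality, I would apply Theorem~\ref{thm:ur_lin_ext} with identical components $(P_r,\om_r)$ on both sides: the component hypothesis $\L(P_r,\om_r) \subseteq \L(P_r,\om_r)$ is trivial, while $\L(\P,\om) \subseteq \L(\CQ,\tau)$ is given. Theorem~\ref{thm:ur_lin_ext} then yields $\L(\urp) \subseteq \L(\CQ[i \to P_i])$, which by Theorem~\ref{thm:kexpansion} implies $\urp \leqF \CQ[i \to P_i]$.

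For the second inequality, the frameworks coincide and I would compare coefficients of $F_\alpha$ via a coloring decomposition of linear extensions. Each $\pi \in \L(\CQ[i \to R_i])$ is uniquely determined by its \emph{coloring} $c=(c_1,\ldots,c_n)$ --- the sequence of first coordinates of the inherited labels of $\pi$ --- together with a tuple $(\pi_r)_r$ of linear extensions $\pi_r \in \L(R_r,\rho_r)$ that fill the positions of color $r$ in order. The coloring $c$ contains $|R_r|$ copies of each $r$ and respects $\CQ$ in the sense that $r <_\CQ s$ forces all $r$'s to precede all $s$'s. For fixed $c$, descents of $\pi$ split into inter-color descents (at positions $i$ with $c_i > c_{i+1}$, forced by $c$ alone) and within-color descents (at positions with $c_i = c_{i+1} = r$, corresponding exactly to descents of $\pi_r$ at positions $k \in R_r^c$, where $R_r^c \subseteq [|R_r|-1]$ is the set of $k$'s for which the $k$-th and $(k+1)$-th $r$'s in $c$ sit at adjacent positions of $c$). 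Writing $D_r^*(c,\alpha) \subseteq R_r^c$ for the within-run descent pattern of $\pi_r$ forced by $c$ and the target $\alpha$, this analysis yields
\[[F_\alpha]\,\genx{\CQ[i\to R_i]} \;=\; \sum_c \prod_r \bigl|\{\pi_r \in \L(R_r,\rho_r): \des(\pi_r)\cap R_r^c = D_r^*(c,\alpha)\}\bigr|,\]
where $c$ ranges over $\CQ$-respecting colorings whose inter-color descents coincide with $S(\alpha) \cap \{i: c_i \neq c_{i+1}\}$. Since $|P_r|=|Q_r|$, the same set of colorings indexes both the $R_r = P_r$ and $R_r = Q_r$ sides.

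By Theorem~\ref{thm:kexpansion}, the hypothesis $(P_r,\om_r) \leqF (Q_r,\tau_r)$ amounts to the pointwise bound $|\{\pi_r \in \L(P_r,\om_r): \des\pi_r = D\}| \leq |\{\pi_r \in \L(Q_r,\tau_r): \des\pi_r = D\}|$ for every $D \subseteq [|P_r|-1]$. Summing these inequalities over all $D$'s that restrict to $D_r^*(c,\alpha)$ on $R_r^c$ bounds each factor in the product above; taking products and summing over $c$ then yields $[F_\alpha]\,\genx{\CQ[i\to P_i]} \leq [F_\alpha]\,\genx{\urq}$ for every $\alpha$, completing the second inequality. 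The main technical obstacle is the descent bookkeeping inside the coloring decomposition --- pinning down $R_r^c$ and $D_r^*(c,\alpha)$ precisely and verifying the displayed counting identity --- after which everything reduces to a term-by-term application of the hypotheses.
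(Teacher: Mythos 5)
Your proof is correct, and it takes a genuinely different route from the paper's. The paper gives a single, global argument: it uses the hypothesis $(P_r,\om_r) \leqF (Q_r,\tau_r)$ to fix, for each $r$, a descent-preserving injection $\varphi^{(r)}:\L(P_r,\om_r)\to\L(Q_r,\tau_r)$, and then builds one descent-preserving injection $\Phi:\L(\urp)\to\L(\urq)$ by leaving the first coordinates of $\pi$ untouched and replacing the second-coordinate subsequence $\pi^{(r)}$ by its image under $\varphi^{(r)}$. Verifying that $\Phi(\pi)\in\L(\urq)$ uses $\lessthan(\P,\om)\supseteq\lessthan(\CQ,\tau)$ (Theorem~\ref{thm:rbd}) for cross-component pairs, and injectivity and descent preservation are inherited from the $\varphi^{(r)}$. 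Your proof instead factors through the intermediate poset $\CQ[i\to P_i]$, disposes of the framework change via Theorem~\ref{thm:ur_lin_ext}, and then handles the component change by a direct coefficient-counting argument via the coloring decomposition. Both approaches are sound; yours cleanly separates the two sources of inequality (framework vs.\ components) and avoids choosing explicit injections for the components, at the cost of the descent bookkeeping in the coloring decomposition (identifying $R_r^c$, $D_r^*(c,\alpha)$, and the summing-over-$D$ step). The paper's single injection is arguably slicker and shorter because it never needs to enumerate colorings or track the adjacency pattern $R_r^c$. One small point worth making explicit in your write-up: $|P_r|=|Q_r|$ does follow from $(P_r,\om_r)\leqF(Q_r,\tau_r)$ (both $K$'s are homogeneous), which justifies using the same coloring index set on both sides of the comparison.
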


As special cases, we can let $\P =\CQ$ be a 2-element antichain to deduce that disjoint union preserves $F$-positivity, or let $\P=\CQ$ be a 2-element chain (with either a strict or a weak edge) to deduce that ordinal sum does too.

\begin{proof}[Proof of Theorem~\ref{thm:ur_fpositivity}]  In view of Theorem~\ref{thm:kexpansion}, we will prove the inequality by defining an injective map $\Phi:\L(\urp) \to \L(\urq)$ that preserves the descent set (where descents are defined according to lexicographic order).  Let 
\[
\pi = ((j_1,k_1), (j_2,k_2), \ldots, (j_N, k_N))
\]
be a linear extension of $\urp$.  For $r \in [\vert \P\vert ]$, let $\pi^{(r)}$  denote the subsequence of $\pi$ consisting of those pairs $(j,k)$ such that $j = r$:
\[
\pi^{(r)}= ((r, k_{\ell_1}), (r, k_{\ell_2}), \ldots, (r, k_{\ell_{\vert P_r\vert }})).
\]
For example, for $\pi$ from~\eqref{equ:ur_lin_ext}, we get $\pi^{(3)} = ((3,1), (3,3), (3,2), (3,4))$.  Note that $(k_{\ell_1}, k_{\ell_2}, \ldots,k_{\ell_{\vert P_r\vert }})$ is a linear extension of $P_r$\,.  By our hypothesis, there exists a descent-preserving injective map $\varphi^{(r)}$ from $\L(P_r)$ to $\L(Q_r)$.  Denote the image of $(k_{\ell_1}, k_{\ell_2}, \ldots,k_{\ell_{\vert P_r\vert }})$ under $\varphi^{(r)}$ by $(k'_{\ell_1}, k'_{\ell_2}, \ldots,k'_{\ell_{\vert P_r\vert }})$.  Then let $\phi^{(r)}$ be the map that sends $\pi^{(r)}$ to 
\[
\phi^{(r)}(\pi^{(r)})= ((r, k'_{\ell_1}), (r, k'_{\ell_2}), \ldots, (r, k'_{\ell_{\vert P_r\vert }}));
\]
see Example~\ref{exa:ur_fpositivity} below.  Finally, since every entry $(j,k)$ of $\pi$ is a member of exactly one $\pi^{(r)}$, we define $\Phi(\pi)$ to be the result of applying every $\phi^{(r)}$ to the subsequence $\pi^{(r)}$ within $\pi$.
 We need to show that $\Phi$ is well defined, injective and descent-preserving.

For $\pi \in \L(\urp)$, we first need to show that $\Phi(\pi) \in \L(\urq)$.  Clearly the pairs in $\Phi(\pi)$ are in bijection with the labels on $\urq$,  So consider $(a_1, b_1)$ that comes before $(a_2, b_2)$ in $\Phi(\pi)$.  We need to show that $((a_2,b_2),(a_1,b_1)) \not\in \lessthan(\urq)$. 

Suppose first that $a_1 \neq a_2$.  Since the $a_j$ labels do not change under $\Phi$, there must be some $(a_1, b_3)$ that comes before some $(a_2, b_4)$ in $\pi$.  Thus $(a_2, a_1) \not\in \lessthan(\P)$ so, by Theorem~\ref{thm:rbd} and since $\L(\P,\om) \subseteq \L(\CQ,\tau)$, we know $(a_2, a_1) \not\in \lessthan(\CQ)$.  Thus $((a_2,b_2),(a_1,b_1)) \not\in \lessthan(\urq)$.  

Now suppose that $a_1 = a_2$.  Since $(a_1, b_1)$ comes before $(a_2, b_2)$ in $\Phi(\pi)$, by construction of $\Phi(\pi)$, we know that $b_1$ comes before $b_2$ in some linear extension of $(Q_{a_1}, \om_{a_1})$.  Thus $(b_2, b_1) \not\in \lessthan{(Q_{a_1}, \om_{a_1})}$ and so $((a_2,b_2),(a_1,b_1)) \not\in \lessthan(\urq)$.

To show injectivity, suppose $\Phi(\pi) = \Phi(\rho)$ for $\rho \in \L(\urp)$.  Since $\Phi(\pi)$ does not change the first entry in each pair of $\pi$, we know the $j$th pair of $\pi$ matches the $j$th pair $\rho$ in their first entries, for all $j$.  The second entries also match since each $\varphi^{(r)}$ in injective, and so each $\phi^{(r)}$ is too.  

To show the descent set is preserved, consider adjacent pairs $(j_\ell, k_\ell)$ and \linebreak $(j_{\ell+1}, k_{\ell+1})$ in $\pi = ((j_1,k_1), (j_2,k_2), \ldots, (j_N, k_N))$.  If $j_\ell \neq j_{\ell+1}$, then since $\Phi$ preserves the first entry of each pair, $\ell$ is a descent of $\pi$ if and only if it is a descent of $\Phi(\pi)$.  The same conclusion applies if $j_\ell = j_{\ell+1}$ since each $\varphi^{(r)}$ in descent preserving, and so each $\phi^{(r)}$ is too.  
\end{proof}

\begin{example}\label{exa:ur_fpositivity}
Let $\urp$ as be as in Figure~\ref{fig:ur}.  Let $(Q,\tau)$ be the result of deleting the edge $1<2$ from $(P,\om)$.  For $r=1,2,3$, define $(Q_r,\tau_r) = (P_r,\om_r)$ except delete the edge $1<4$ from $(Q_3, \tau_3)$.  As in~\eqref{equ:ur_lin_ext}, let
\[
\pi =(\textcolor{red}{(1,4)}, \textcolor{red}{(1,1)}, \textcolor{blue}{(3,1)}, \textcolor{red}{(1,3)}, \textcolor{blue}{(3, 3)}, \textcolor{red}{(1,2)}, \textcolor{blue}{(3,2)}, \textcolor{blue}{(3,4)}, (2,3), (2,4), (2,1), (2,2)).
\]
Then let us choose the descent-preserving $\varphi^{(r)}$ so that
\begin{align*}
\phi^{(1)}(\pi^{(1)}) &= \phi^{(1)}(\textcolor{red}{((1,4),(1,1),(1,3),(1,2))}) = \textcolor{red}{((1,4),(1,1),(1,3),(1,2))},\\
\phi^{(2)}(\pi^{(2)}) &= \phi^{(2)}(((2,3),(2,4),(2,1),(2,2))) = ((2,3),(2,4),(2,1),(2,2)), \\
\phi^{(3)}(\pi^{(3)}) &= \phi^{(3)}(\textcolor{blue}{((3,1),(3,3),(3,2),(3,4))}) = \textcolor{blue}{((3,3),(3,4),(3,1),(3,2))}
\end{align*}
(in fact, $\varphi^{(3)}$ is the only one in which we had some choice).  Then 
\[
\Phi(\pi) = (\textcolor{red}{(1,4)}, \textcolor{red}{(1,1)}, \textcolor{blue}{(3,3)}, \textcolor{red}{(1,3)}, \textcolor{blue}{(3, 4)}, \textcolor{red}{(1,2)}, \textcolor{blue}{(3,1)}, \textcolor{blue}{(3,2)}, (2,3), (2,4), (2,1), (2,2)).
\]
We see that $\des(\pi) = \des(\Phi(\pi))$ and can check that $\Phi(\pi) \in \L(\urq)$.
\end{example}

\section{Classes with conditions that are both necessary and sufficient}\label{sec:greene}

This section is devoted to classes of posets for which we have simple conditions on $(P,\om)$ and $(Q,\tau)$ that are both necessary and sufficient for $(P,\om) \leqF (Q,\tau)$. 

\subsection{Greene shape \texorpdfstring{$(k,1)$}{(k,1)}}\label{subsec:Greene}

For this subsection, we will restrict our attention to naturally labeled posets and so we will denote $(P,\om)$ just by $P$.  We will use the term ``Greene shape'' hereafter to mean chain Greene shape (from Definition~\ref{def:greene}). Posets of Greene shape $(k,1)$ are those that consist of a maximal chain with $k$ elements, which we call the \emph{spine}, and a single other element $e$, which we call the \emph{foot}.  Such posets come in four types, with examples from the four types when $k=5$ illustrated in Figure~\ref{fig:4types}:\footnote{In \cite{LeMc22}, these four types are labeled I--IV, respectively.}
\begin{enumerate}
\item[(D)] $e$ is covered by a non-minimal element of the spine (the foot points \textbf{D}own);
\item[(U)] $e$ covers a non-maximal element of the spine (the foot points \textbf{U}p);
\item[(B)] $e$ both covers and is covered by elements of the spine (\textbf{B}oth up and down);
\item[(I)] $e$ is its own connected component (\textbf{I}solated).
\end{enumerate}

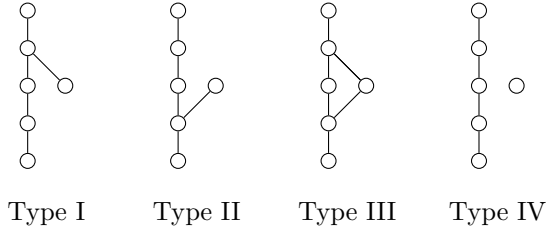
\begin{figure}[ht]
\begin{center}
\begin{tikzpicture}
\begin{scope}
\tikzstyle{every node} = [draw, shape=circle, inner sep=2pt];
\node at (0,0) [draw,circle]   (P1) {};
\node at (0,0.5) [draw,circle] (P2) {};
\node at (0,1) [draw,circle]   (P3) {};
\node at (0,1.5) [draw,circle] (P4) {};
\node at (0,2) [draw,circle]   (P5) {};
\node at (0.5,1) [draw,circle]   (P6) {};
\draw (P6)--(P4);
\draw (P1)--(P2);
\draw (P2)--(P3);
\draw (P3)--(P4);
\draw (P4)--(P5);
\end{scope}
\begin{scope}
\node at (0.25,-0.7) {Type D};
\end{scope}

\begin{scope}
\tikzstyle{every node} = [draw, shape=circle, inner sep=2pt];
\node at (2,0) [draw,circle]   (Q1) {};
\node at (2,0.5) [draw,circle] (Q2) {};
\node at (2,1) [draw,circle]   (Q3) {};
\node at (2,1.5) [draw,circle] (Q4) {};
\node at (2,2) [draw,circle]   (Q5) {};
\node at (2.5,1) [draw,circle]   (Q6) {};
\draw (Q6)--(Q2);
\draw (Q1)--(Q2);
\draw (Q2)--(Q3);
\draw (Q3)--(Q4);
\draw (Q4)--(Q5);
\end{scope}
\begin{scope}
\node at (2.25,-0.7) {Type U};
\end{scope}

\begin{scope}
\tikzstyle{every node} = [draw, shape=circle, inner sep=2pt];
\node at (4,0) [draw,circle]   (R1) {};
\node at (4,0.5) [draw,circle] (R2) {};
\node at (4,1) [draw,circle]   (R3) {};
\node at (4,1.5) [draw,circle] (R4) {};
\node at (4,2) [draw,circle]   (R5) {};
\node at (4.5,1) [draw,circle]   (R6) {};
\draw (R6)--(R4);
\draw (R6)--(R2);
\draw (R6)--(R4);
\draw (R1)--(R2);
\draw (R2)--(R3);
\draw (R3)--(R4);
\draw (R4)--(R5);
\end{scope}
\begin{scope}
\node at (4.25,-0.7) {Type B};
\end{scope}

\begin{scope}
\tikzstyle{every node} = [draw, shape=circle, inner sep=2pt];
\node at (6,0) [draw,circle]   (S1) {};
\node at (6,0.5) [draw,circle] (S2) {};
\node at (6,1) [draw,circle]   (S3) {};
\node at (6,1.5) [draw,circle] (S4) {};
\node at (6,2) [draw,circle]   (S5) {};
\node at (6.5,1) [draw,circle]   (S6) {};
\draw (S1)--(S2);
\draw (S2)--(S3);
\draw (S3)--(S4);
\draw (S4)--(S5);
\end{scope}
\begin{scope}
\node at (6.25,-0.7) {Type I};
\end{scope}

\end{tikzpicture}
\caption{Examples from the four different types of posets of Greene shape $(5,1)$.}
\label{fig:4types}
\end{center}
\end{figure}

Every poset $P$ of Greene shape $(k,1)$ can be encoded as an interval $[a_P, b_P] = I(P)$ of integers from from the set $\{0, 1, \ldots, k+1\}$ as follows.  Number the elements of the spine by $1, \ldots, k$ from bottom to top, and call the foot $e$ as before.  Let $a_P$ (resp.\ $b_P$) be the number of the element of the spine covered by $e$ (resp.\ which covers $e$), or let $a_P = 0$ (resp.\ $b_P=k+1$) if no such element exists.  For example, the four posets $P$ in Figure~\ref{fig:4types} have $I(P)$ equaling $[0,4]$, $[2,6]$, $[2,4]$, and $[0,6]$, respectively.

We are now able to state necessary and sufficient conditions for $F$-positivity, $F$-support containment, $M$-positivity, and $M$-support containment in terms of a simple containment condition for $I(P)$.  Let $\switchx{P}$ denote $P$ with all strict edges, i.e., the result of applying the bar involution to the naturally labeled $P$.

\begin{theorem}\label{thm:k1}
Let $P$ and $Q$ be naturally labeled posets of Greene shape $(k,1)$.  The following are equivalent:
\begin{enumerate}
\renewcommand{\theenumi}{\arabic{enumi}}
\item $P \leqF Q$;\smallskip
\item $\Fsupp(P) \subseteq \Fsupp(Q)$;\smallskip
\item $P \leqM Q$;\smallskip
\item $\switchx{P} \leqM \switchx{Q}$;\smallskip
\item $\Msupp(\switchx{P}) \leqM \Msupp(\switchx{Q})$;\smallskip
\item $I(P) \subseteq I(Q)$.
\end{enumerate}
\end{theorem}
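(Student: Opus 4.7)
The plan is to combine the implications diagram~\eqref{equ:implications} with the bar-invariance of Proposition~\ref{pro:fourfold} so that most of the implications come for free, reducing the real work to three key steps: (6)$\Rightarrow$(1), (5)$\Rightarrow$(6), and (3)$\Rightarrow$(6). From~\eqref{equ:implications} we get (1)$\Rightarrow$(2),(3) directly; applying Proposition~\ref{pro:fourfold} before~\eqref{equ:implications} yields (1)$\Rightarrow$(4) via $\switchx{P} \leqF \switchx{Q}$ and (2)$\Rightarrow$(5) via $\Fsupp(\switchx{P}) \subseteq \Fsupp(\switchx{Q})$; and (4)$\Rightarrow$(5) is immediate from~\eqref{equ:implications}.

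For (6)$\Rightarrow$(1), I would fix any integer $j$ with $a_P + 1 \leq j \leq b_P$ and impose the same natural labeling scheme on both $P$ and $Q$: label the non-spine element $e$ with $j$, and label the spine bottom-to-top using $\{1,\ldots,n+1\} \setminus \{j\}$ in increasing order. The containment $I(P) \subseteq I(Q)$ guarantees that $j$ also lies in the valid natural-labeling range $[a_Q + 1, b_Q]$ for $Q$. With this common labeling, the linear extensions of each labeled poset are exactly the permutations obtained by inserting the symbol $j$ into a single fixed spine word at one of the positions $m \in [a_P, b_P - 1]$ for $P$ and $m \in [a_Q, b_Q - 1]$ for $Q$. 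The interval containment then yields $\L(P,\om) \subseteq \L(Q,\tau)$, so $P \leqF Q$ by Theorem~\ref{thm:kexpansion}.

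For (5)$\Rightarrow$(6), the key calculation is the identity
\[
\Msupp(\switchx{P}) = \{1^{n+1}\} \cup \{1^{j-1} \cdot 2 \cdot 1^{n-j} : a_P < j < b_P\}.
\]
Because every edge of $\switchx{P}$ is strict, a $\switchx{P}$-partition $f$ takes $n$ distinct spine values and places $f(e)$ strictly between $f(s_{a_P})$ and $f(s_{b_P})$, under the conventions $v_0 = 0$ and $v_{n+1} = \infty$. Case-splitting on whether $f(e)$ is a new value or coincides with some spine value, together with the requirement that $f$ be onto its image, yields exactly the two families of compositions above. Crucially, every poset of Greene shape $(n,1)$ satisfies $b_P - a_P \geq 2$ (else the chain through $e$ would be too long), so the second family is always nonempty and uniquely encodes the open interval $(a_P, b_P)$; set containment of $M$-supports then forces $a_Q \leq a_P$ and $b_P \leq b_Q$, i.e., $I(P) \subseteq I(Q)$. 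For (3)$\Rightarrow$(6), I would argue the contrapositive by exhibiting a composition $\alpha$ with $[M_\alpha] \genx{P} > [M_\alpha] \genx{Q}$: take $\alpha = (a_P + 1,\, n - a_P)$ if $a_P < a_Q$, and $\alpha = (b_P - 1,\, n + 2 - b_P)$ if $b_P > b_Q$. A $P$-partition realizing $\alpha$ is parametrized by the choice of $w = f(e)$ subject to $T_w \geq a_P + 1$ and $T_{w-1} \leq b_P - 1$, where $T_i = \alpha_1 + \cdots + \alpha_i$; a direct check gives exactly two valid $w$ for $P$ but only one for $Q$.

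The main obstacle is the careful bookkeeping needed in the $\Msupp(\switchx{P})$ computation and the $[M_\alpha] \genx{P}$ formula, particularly at the boundary cases $a_P = 0$ and $b_P = n+1$ where one of the two strictness constraints on $f(e)$ drops out; the conventions $v_0 = 0$ and $v_{n+1} = \infty$ unify these cases at the cost of some notational overhead. Beyond this, the remaining arguments reduce to direct calculations on compositions and on linear extensions.
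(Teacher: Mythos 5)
Your proof is correct, and it takes a genuinely different route from the paper in two of the three substantive steps. For (6)$\Rightarrow$(1), the paper reduces to Theorem~\ref{thm:rbd2} through a five-way case analysis on the Types of $P$ and $Q$, while you exhibit a single common natural labeling (assigning $j$ to the non-spine element for any $j\in[a_P+1,b_P]$) under which the linear extension sets of $P$ and $Q$ become literally nested intervals of insertion positions; this is cleaner, sidesteps the case split entirely, and makes the linear extension containment transparent. For (5)$\Rightarrow$(6), the paper routes through jump sequences and dominance via Corollary~\ref{cor:jump}, while you compute $\Msupp(\switchx{P})$ outright and read off the interval $(a_P,b_P)$ directly; your observation that $b_P - a_P \geq 2$ always holds (otherwise the chain through $e$ would beat the spine and the Greene shape would be $(k+1)$ rather than $(k,1)$) correctly guarantees the second family in your $\Msupp$ description is nonempty, which is what makes the interval recoverable. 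Your (3)$\Rightarrow$(6) is essentially the paper's order-ideal count dressed up in a parametrization by $f(e)$, and your choices $\alpha=(a_P+1,k-a_P)$ and $\alpha=(b_P-1,k+2-b_P)$ match the coefficients the paper computes. Your logical reduction is also a touch more economical: you close the cycle using only (6)$\Rightarrow$(1), (5)$\Rightarrow$(6), and (3)$\Rightarrow$(6), folding (2) into (5) via Proposition~\ref{pro:fourfold} and \eqref{equ:implications}, whereas the paper separately establishes (2)$\Rightarrow$(6). Both approaches are sound; the paper's has the advantage of illustrating its general machinery (redundancy-before-deletion, jump sequences) in action, while yours is more self-contained and elementary for this specific class.
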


The point of this theorem is that $(6)$ gives an immediate way to determine whether the other five inequalities hold or not.  Note that (5) has $\switchx{P}$ and $\switchx{Q}$ in place of $P$ and $Q$.  This is because, for a naturally labeled poset $P$, the $M$-support of $P$ is the full set of compositions of $\vert P\vert$.

We would like to use jump sequences in our proof, but the jump sequence of a naturally labeled poset $P$ is just $(\vert P\vert )$.  So instead, we will use the jump sequence of $\switchx{P}$, which we call the \emph{bar-jump} of $P$.  Similarly, the \emph{star-bar-jump} of $P$ will mean the jump after applying the star involution to $\switchx{P}$.

\begin{proof}[Proof of Theorem~\ref{thm:k1}]
We know from~\eqref{equ:implications} that (1)$\Rightarrow$(2) and (1)$\Rightarrow$(3).  By Proposition~\ref{pro:fourfold} and using~\eqref{equ:implications} again, we also get that (1)$\Rightarrow$(4)$\Rightarrow$(5).  We will show that (6)$\Rightarrow$(1) and that (2), (3) and (5) each implies (6). 

\textbf{(6)$\Rightarrow$(1).}  For (6) to hold with $P$ not isomorphic to $Q$, we are limited to the following cases.
\begin{enumerate}
\item $Q$ is of Type I.  Then $Q$ is obtained from $P$ by edge deletion, so (1) holds.  
\item $P$ and $Q$ are both of Type D.  Then the foot must be connected to a larger element of the spine in $Q$ than in $P$.  Thus $Q$ is obtainable from $P$ by generalized-redundancy-before-deletion and so, by Theorem~\ref{thm:rbd2}, linear extension containment holds, from which (1) follows.
\item $P$ and $Q$ are both of Type U.  This is similar to the previous case.
\item 
$P$ is of Type B and $Q$ is of Type D or U.  We prove the case when $Q$ is of Type D, with the other case being similar.  Delete the edge in $P$ that goes up from the spine to the foot to obtain an intermediate poset $R$ such that $P \leqF R$.  Since $R$ is of Type D, apply the argument from (b) to get that $R \leqF Q$ and hence (1) holds.  
\item Both $P$ and $Q$ are of Type B.  Then $Q$ is obtained from $P$ by (at most) two redundancy-before-deletion operations, one like in Case (b) and one like in Case (c).  Again, Theorem~\ref{thm:rbd2} yields the desired result.
\end{enumerate}

\textbf{(2)$\Rightarrow$(6).}
Next, suppose (2) holds.  By Corollary~\ref{cor:jump} and the statements that immediately follow it, the bar-jump and star-bar-jump of $Q$ must weakly dominate those of $P$.  
Observe that the bar-jump of a poset $P$ with $I(P) = [a,b]$ is $(1^a, 2, 1^{k-1-a})$ while the star-bar-jump is $(1^{k+1-b}, 2, 1^{b-2})$.  Thus if $I(Q) = [c,d]$, we must have $a \geq c$ and $b\leq d$, so (6) holds.

\textbf{(3)$\Rightarrow$(6).}
If $P$ is a naturally labeled poset with $k+1$ elements, then for any $0 \leq j \leq k+1$, Proposition~\ref{pro:mexpansion} tells us that the coefficient of $M_{(j,k+1-j)}$ in $K_P$ is the number of order ideals with $j$ elements.  If $P$ has Greene shape $(k,1)$ and $I(P) = [a,b]$ with $0 \leq a < b \leq k+1$, we see that the number of such order ideals is 2 if $a < j < b$ and 1 otherwise.  Indeed, $a < j < b$ corresponds to those cases when we have the option of including the foot in the order ideal or not.  Therefore, if $P \leqM Q$, just by considering the coefficients on terms of the form $M_{(j,k+1-j)}$, we obtain $I(P) \subseteq I(Q)$.

\textbf{(5)$\Rightarrow$(6).}
Finally, suppose (5) holds.  By Corollary~\ref{cor:jump} and the sentence immediately before Example~\ref{exa:barjump}, the jump and star-jump of $\switchx{Q}$ must weakly dominate those of $\switchx{P}$.  These are the same conditions as when showing (2)$\Rightarrow$(6), so (6) holds.
\end{proof}

For fixed $k$, considering the set of naturally labeled posets of Greene shape $(k,1)$, we can order them according to $\leqF$ to obtain what we might call the \emph{$F$-positivity poset} for this class.  Note that the latter poset is a poset of posets!  The equivalence of (1) and (6) in Theorem~\ref{thm:k1} shows that the Hasse diagram has a particularly appealing form; see Figure~\ref{fig:fpos} for the case $k=5$, from which the structure for general $k$ is clear.  Of course, we would get the same poset using the orders given by (2)--(5) of Theorem~\ref{thm:k1} in place of $\leqF$.  

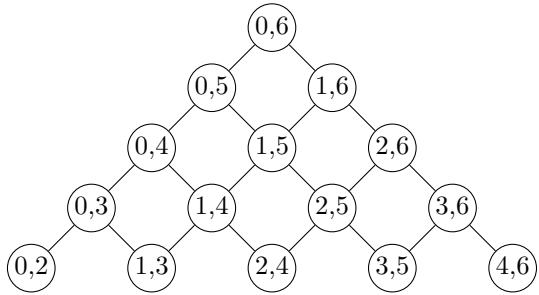
\begin{figure}[htbp]
\begin{center}
\begin{tikzpicture}[scale=0.8]
\tikzstyle{every node} = [draw, shape=circle, inner sep=1pt];
\draw (0,0) node (02) {0,2};
\draw (2,0) node (13) {1,3};
\draw (4,0) node (24) {2,4};
\draw (6,0) node (35) {3,5};
\draw (8,0) node (46) {4,6};
\draw (1,1) node (03) {0,3};
\draw (3,1) node (14) {1,4};
\draw (5,1) node (25) {2,5};
\draw (7,1) node (36) {3,6};
\draw (2,2) node (04) {0,4};
\draw (4,2) node (15) {1,5};
\draw (6,2) node (26) {2,6};
\draw (3,3) node (05) {0,5};
\draw (5,3) node (16) {1,6};
\draw (4,4) node (06) {0,6};
\draw (02) -- (03) -- (13) -- (14) -- (24) -- (25) -- (35) -- (36) -- (46);
\draw (03) -- (04) -- (14) -- (15) -- (25) -- (26) -- (36);
\draw (04) -- (05) -- (15) -- (16) -- (26);
\draw (05) -- (06) -- (16);
\end{tikzpicture}
\end{center}
\caption{The $F$-positivity poset for naturally labeled posets of Greene shape $(5,1)$.  We represent the elements $P$ by $I(P)$ with the square brackets omitted.}
\label{fig:fpos}
\end{figure}

\subsection{Caterpillar posets}

We define a \emph{caterpillar poset} to be a naturally labeled poset consisting of a maximal chain of $k$ elements, again called the \emph{spine}, along with $\ell$ additional elements that are each connected to the spine by at most one edge; an example is shown Figure~\ref{fig:caterpillar}.  Connected caterpillar posets are a poset analogue of caterpillar graphs.   This analogy suggests that we call the non-spine edges ``legs'' and the non-spine elements ``feet.'' 

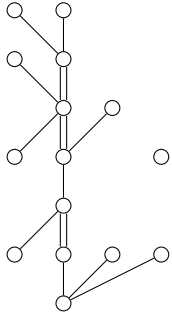
\begin{figure}[ht]
\begin{center}
\begin{tikzpicture}[scale=1.0]
\tikzstyle{every node} = [draw, shape=circle, inner sep=2pt];
\node at (0,0)   (P1) {};
\node at (0,0.5) (P2) {};
\node at (0,1)   (P3) {};
\node at (0,1.5) (P4) {};
\node at (0,2)   (P5) {};
\node at (0,2.5) (P6) {};
\node at (0,3)   (P7) {};

\node at (0.5,0.5) (K1) {};
\node at (-.5,0.5)(K2) {};
\node at (0.5,2)   (K3) {};
\node at (-0.5,3)  (K4) {};
\node at (-0.5,1.5)(K5) {};
\node at (1,1.5) {};
\node at (1,0.5) (K6) {};
\node at (-0.5,2.5) (K7) {};

\draw (K1)--(P1);
\draw (K2)--(P3);
\draw (K3)--(P4);
\draw (K4)--(P6);
\draw (K5)--(P5);
\draw (K7)--(P5);
\draw(K6)--(P1);

\draw (P1)--(P2);
\draw (P2)--(P3);
\draw (P3)--(P4);
\draw (P4)--(P5);
\draw (P5)--(P6);
\draw (P6)--(P7);

\end{tikzpicture}
\caption{An example of a caterpillar poset}
\label{fig:caterpillar}
\end{center}
\end{figure}

The terms ``spine'' and ``feet'' are consistent with the previous subsection, so we can follow the setup there to encode a caterpillar poset as a list of intervals from $\{0,1,\ldots,k+1\}$, with one interval for each foot.  The interval associated with a foot $e$ will be denoted $[a(e), b(e)]$.  Denote the list of these intervals for the feet by $I(P)$ and order it lexicographically.  Thus the poset in Figure~\ref{fig:caterpillar} would be encoded as 
\[
\I(P) = ([0,3],[0,5],[0,8],[1,8],[1,8],[4,8],[5,8],[6,8]).
\]
We will denote the $i$th element of this sequence by $\I(P)_i = [a(p_i), b(p_i)]$, hereafter referring to the foot that contributes $\I(P)_i$ as $p_i$.  Similarly, $q_i$ will contribute $\I(Q)_i$ to $\I(Q)$. 

\begin{theorem}\label{thm:caterpillar}
If $P$ and $Q$ are naturally labeled caterpillar posets with $k$ spine elements and $\ell$ feet then the following are equivalent:
\begin{enumerate}
\renewcommand{\theenumi}{\arabic{enumi}}
\item\label{ite:Fleq} $P \leqF Q$;\smallskip
\item\label{ite:Fsupp} $\Fsupp(P) \subseteq \Fsupp(Q)$;\smallskip
\item\label{ite:Mbarleq} $\switchx{P} \leqM \switchx{Q}$;\smallskip
\item\label{ite:Mbarsupp} $\Msupp(\switchx{P}) \subseteq \Msupp(\switchx{Q})$;\smallskip
\item\label{ite:I}  $\I(P)_i \subseteq \I(Q)_i$ for all $i=1,\ldots,\ell$.
\end{enumerate}
\end{theorem}

Again, the point of this theorem is that the last condition gives a simple way to imply the inequalities shown in the other four conditions.  

\begin{remark} Theorem \ref{thm:caterpillar} does not hold with a mixture of strict and weak edges. For a counterexample, we can consider two posets $P$ and $Q$ that both have $k$ elements on their spine connected by all strict edges, with $\I(P) = \{[1,k+1]\}$ and $\I(Q) = \{[0,k]\}$.  The case $k=3$ is shown in Figure~\ref{fig:counterexample}.  One can check that $K_P=K_Q$ and, in fact, they equal the Schur function $s_{(2,1^{k-1})}$.  The equivalence of \eqref{ite:Fleq} and \eqref{ite:I} would then imply that $\I(P)_1 = \I(Q)_1$ which is false for all $k\geq2$.

\begin{figure}[ht]
\begin{center}
\begin{tikzpicture}[scale=0.8]
\tikzstyle{every node} = [inner sep=2pt]L
\begin{scope}
\node at (0,0) [draw,circle] (P1) {};
\node at (0,1) [draw,circle] (P2) {};
\node at (0,2) [draw,circle] (P3) {};
\node at (0.7,1) [draw,circle] (P4) {};
\draw[double distance=2pt] (P1)--(P2);
\draw[double distance=2pt] (P2)--(P3);
\draw (P1)--(P4);
\node at (-1.6,1) {$P:$};
\end{scope}
\begin{scope}[xshift = 4.2cm]
\node at (0,0) [draw,circle] (P1) {};
\node at (0,1) [draw,circle] (P2) {};
\node at (0,2) [draw,circle] (P3) {};
\node at (-0.7,1) [draw,circle] (P4) {};
\draw[double distance=2pt] (P1)--(P2);
\draw[double distance=2pt] (P2)--(P3);
\draw (P3)--(P4);
\node at (-1.6,1) {$Q:$};
\end{scope}
\end{tikzpicture}
\caption{A counterexample for mixed-edge caterpillar posets.}
\label{fig:counterexample}
\end{center}
\end{figure}
\end{remark}

The feet of a caterpillar poset with $k$ spine elements come in three types, which we will call $D$, $U$, and $I$ to parallel Figure~\ref{fig:4types}.  In other words, Type D are those feet whose legs hang down from the spine, i.e., the foot has interval $[0,i]$ for $2\leq i\leq k$.  Note that $[0,1]$ is not a valid interval since the corresponding foot would be an element of the spine.  Similarly, Type U are those feet whose legs point up from the spine, i.e., the foot has interval $[i,k+1]$ for $1 \leq i \leq k-1$. Type I are isolated feet, meaning they are their own connected component of $P$.  Notice that the lexicographical ordering of $\mathcal{I}(P)$ means that Type D elements are listed first, followed by Type I, and then Type U.  We let $P_D$, $P_U$ and $P_I$ denote the number of elements of a caterpillar poset $P$ of the three types.

Our proofs will make heavy use of the \emph{jump sequence} of a poset as studied in Subsection~\ref{subsec:jump}.  

\begin{lemma}\label{lem:caterpillar}
Suppose $P$ and $Q$ are caterpillar posets with $k$ spine elements and $\ell$ feet.  If $\Fsupp(P) \subseteq \Fsupp(Q)$ or $\Msupp(\switchx{P}) \subseteq \Msupp(\switchx{Q})$ then
\begin{enumerate}
\item $\card{P}{D} + \card{P}{I} \leq \card{Q}{D} + \card{Q}{I}$;\smallskip
\item $\card{P}{U} + \card{P}{I} \leq \card{Q}{U} + \card{Q}{I}$.
\end{enumerate}
\end{lemma}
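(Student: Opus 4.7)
The plan is to establish (a) by comparing the first entry of the jump sequences of $\switchx{P}$ and $\switchx{Q}$, and then deduce (b) from (a) via the star involution.

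For (a), by Proposition~\ref{pro:fourfold} the hypothesis $\Fsupp(P) \subseteq \Fsupp(Q)$ gives $\Fsupp(\switchx{P}) \subseteq \Fsupp(\switchx{Q})$, so by~\eqref{equ:implications} we also have $\Msupp(\switchx{P}) \subseteq \Msupp(\switchx{Q})$, and Corollary~\ref{cor:jump} yields $\jump(\switchx{P}) \domleq \jump(\switchx{Q})$. I would focus on the first entry $j_0$ of each jump sequence, which counts the elements all of whose downward saturated chains have no strict edges. The bar involution converts every non-spine cover edge (weak in $P$) into a strict edge in $\switchx{P}$, so every Type~II element carries a strict cover edge down to its spine attachment in $\switchx{P}$ and thus has jump at least $1$; meanwhile each Type~I and Type~IV element remains minimal in $\switchx{P}$ and so contributes jump $0$. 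Writing $S(P)$ for the number of spine elements with jump $0$ in $\switchx{P}$, we get $j_0(\switchx{P}) = \card{P}{I} + \card{P}{IV} + S(P)$, and similarly for $Q$, so the dominance becomes
\[ \card{P}{I} + \card{P}{IV} + S(P) \leq \card{Q}{I} + \card{Q}{IV} + S(Q). \]
A refined analysis of $S(P)$ and $S(Q)$, possibly combined with dominance from further entries of these and the related jump sequences, should isolate (a).

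Once (a) is in hand, (b) is obtained by applying the star involution. By Proposition~\ref{pro:fourfold}, $\Fsupp(\rotatep{P}) \subseteq \Fsupp(\rotatep{Q})$, and $\rotatep{P}$ and $\rotatep{Q}$ are again mixed-spine caterpillars with identical rotated spines in which Type~I and Type~II elements are swapped while Type~IV elements are preserved. Hence applying (a) to $(\rotatep{P},\rotatep{Q})$ yields $\card{P}{II} + \card{P}{IV} \leq \card{Q}{II} + \card{Q}{IV}$, which is (b).

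The main obstacle is controlling the spine contributions $S(P)$ and $S(Q)$. A spine element $s_j$ has jump $0$ in $\switchx{P}$ precisely when every spine edge below $s_j$ is strict in $P$ and no Type~I element of $P$ is attached to $s_2, \ldots, s_j$, so $S(\cdot)$ depends on the Type~I attachment positions as well as on the common spine, and $S(P) \leq S(Q)$ need not hold; consequently $j_0$-dominance does not directly produce the desired inequality. I expect this to be surmounted either by combining several dominance conditions arising from $\jump(\switchx{P})$, $\jump(\rotatep{\switchx{P}})$, $\jump(P)$, and $\jump(\rotatep{P})$, or by constructing an explicit linear extension of $P$ whose descent composition, as a member of $\Fsupp(Q)$, forces a strict order ideal of size $\card{P}{I} + \card{P}{IV} + 1$ in $Q$ that must then consist essentially of the Type~I, Type~IV, and initial spine minima of $Q$.
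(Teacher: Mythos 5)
Your proposal follows essentially the same route as the paper's proof: apply the bar involution, compare the first entries of the jump sequences of $\switchx{P}$ and $\switchx{Q}$, and derive (b) from (a) via the star involution (the derivation of (b) from (a) is correct). However, you have also correctly put your finger on a real subtlety. The paper writes the first entry of $\jump(\switchx{P})$ as $\card{P}{I} + \card{P}{IV} + j$ and the first entry of $\jump(\switchx{Q})$ as $\card{Q}{I} + \card{Q}{IV} + j$ using the \emph{same} $j$ in both formulas, and then cancels. But, as you observe, the number of spine elements with jump $0$ in $\switchx{P}$ depends not only on the common spine but also on where the Type~I elements of $P$ are attached: a Type~I element $e$ covered by $s_i$ becomes a minimal element of $\switchx{P}$, reached from any $s_m$ with $m \geq i$ by a saturated chain ending in the strict edge $e \prec s_i$, so all such $s_m$ have jump at least $1$. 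Consequently the spine contributions can differ between $P$ and $Q$ even under the lemma's hypothesis. For example, take the spine $s_1 \prec s_2 \prec s_3 \prec s_4$ with all spine edges strict in $P$ and $Q$, let $P$ have a single Type~I element attached at $s_3$, and let $Q$ have it attached at $s_4$; one checks directly that $\Fsupp(P) = \{1121, 1211, 2111\} \subseteq \{1112, 1121, 1211, 2111\} = \Fsupp(Q)$, yet the number of spine elements of jump $0$ is $2$ for $\switchx{P}$ (namely $s_1, s_2$) but $3$ for $\switchx{Q}$.

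So the difficulty you flag is genuine, and it is present in the paper's own proof, which asserts the equality of the two spine contributions without justification (and, per the example above, without it being true). Your final paragraph only speculates on how to surmount this --- combining further dominance conditions, or a direct linear-extension argument --- without carrying either out, so the proposal does not constitute a complete proof. Still, your diagnosis of the obstacle is sharper than the published write-up; a correct argument for part (a) needs something beyond comparison of first jump entries alone.
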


\begin{proof}
We work with $\switchx{P}$ and $\switchx{Q}$ since  $\Fsupp(P) \subseteq \Fsupp(Q)$ is equivalent to \linebreak $\Fsupp(\switchx{P}) \subseteq \Fsupp(\switchx{Q})$ by Proposition~\ref{pro:fourfold}.  The only elements of $\switchx{P}$ of jump 0 will be those feet of Type D or I along with the bottom element of the spine.  Thus the first element of the jump sequences of $\switchx{P}$ is $\card{P}{D} + \card{P}{I} +1$.  Similarly, the first element of the jump sequence of $\switchx{Q}$ will be $\card{Q}{D} + \card{Q}{I} +1$.  Using Corollary \ref{cor:jump} and inequality (a) that follows it, we yield Lemma~\ref{lem:caterpillar}(a).  By performing the star involution (taking the dual of $P$ and $Q$) and proceeding similarly, we obtain (b).
\end{proof}

\begin{proof}[Proof of Theorem~\ref{thm:caterpillar}]
We know that \eqref{ite:Fleq}$\Rightarrow$\eqref{ite:Fsupp} and \eqref{ite:Mbarleq}$\Rightarrow$\eqref{ite:Mbarsupp}.  We also know from Proposition \ref{pro:fourfold} that \eqref{ite:Fleq} is equivalent to $\switchx{P} \leqF \switchx{Q}$ which then implies \eqref{ite:Mbarleq} by the $M$-positivity of every $F_\alpha$.  We will show that \eqref{ite:I}$\Rightarrow$\eqref{ite:Fleq} and that \eqref{ite:Fsupp} and \eqref{ite:Mbarsupp} each imply \eqref{ite:I}.

\textbf{\eqref{ite:I}$\Rightarrow$\eqref{ite:Fleq}}. 
Assume that $\I(P)_i \subseteq \I(Q)_i$ for all $i=1,\ldots,\ell$.  We wish to transform $P$ into $Q$ by changing the connection of each $p_1, \ldots, p_\ell$ to the spine so that it is the same as that of $q_1, \ldots, q_\ell$, respectively.  Since the feet are not connected directly to each other, these $\ell$ changes can be done in any order and are independent of one another.  So fixing $i$, we wish to show that the result of changing the connection of $p_i$ to the spine to that of $q_i$ results in a poset $P^{(i)}$ such that $P \leqF  P^{(i)}$.

Consider the possible cases for the types of $p_i$ and $q_i$.  If $\I(P)_i = \I(Q)_i$, there is nothing to check.  There are just three cases when $\I(P)_i \subset \I(Q)_i$.
\begin{enumerate}
\item If $q_i$ is of Type I then $P^{(i)}$ is obtained from $P$ by deletion of an edge, so $P \leqF P^{(i)}$.
\item Suppose $p_i$ and $q_i$ are both of Type D.  Hence $q_i$ is connected to a larger element of the spine than $p_i$.  Thus $P^{(i)}$ is obtained from $P$ by generalized-redundancy-before-deletion.  By Theorem~\ref{thm:rbd2}, we get linear extension containment of $P$ in $P^{(i)}$ and so $P \leqF P^{(i)}$. 
\item  The case when $p_i$ and $q_i$ are both of Type U is similar to (b).
\end{enumerate}
The above argument still holds even if the change in the connection of $p_i$ to the spine is applied after changes in the connections of other feet to the spine.  Therefore making the changes from $I(P)_i$ to $I(Q)_i$ for all $i$ results in an increase in $F$-positivity order.

\eqref{ite:Fsupp}$\Rightarrow$\eqref{ite:I} and \eqref{ite:Mbarsupp}$\Rightarrow$\eqref{ite:I}; we will prove these concurrently using the contrapositive.   Suppose $\I(P)_i \not\subseteq \I(Q)_i$ for some $i$.  There are several cases depending on the type of $p_i$ and $q_i$.  Notice that $q_i$ cannot be of Type I since $\I(P)_i \subseteq \I(Q)_i$ always holds in that case.
\begin{enumerate}
\item Both $p_i$ and $q_i$ are of Type U.  We have $\I(P)_i = [a(p_i),k+1]$ and $\I(Q)_i=[a(q_i),k+1]$ with $a(p_i)< a(q_i)$.  We will work with $\switchx{P}$ and $\switchx{Q}$, considering the elements of jump at most $a(p_i)$. The key is that $p_i$ has jump $a(p_i)$ whereas $q_i$ has jump strictly greater than $a(p_i)$.  In full detail, because of the lexicographic order on $\I(P)$, all $i-1$ feet corresponding to the first $i-1$ intervals in $\I(P)$ have jump at most $a(p_i)$ in $\switchx{P}$.  In addition, there are $a(p_i)+1$ spine elements with jump at most $a(p_i)$ and at least one additional foot, including $p_i$, with jump exactly $a(p_i)$.  Thus $\switchx{P}$ has at least $i+a(p_i)+1$ elements of jump at most $a(p_i)$ whereas similar reasoning shows that the number for $\switchx{Q}$ is at most $i + a(p_i)$.  As a result, $\jump(\switchx{P}) \not\domleq \jump(\switchx{Q})$. Using Corollary~\ref{cor:jump} and inequality (a) that follows it, we conclude respectively that $\Msupp(\switchx{P}) \not\subseteq \Msupp(\switchx{Q})$ and $\Fsupp(P) \not\subseteq \Fsupp(Q)$, as required.

\item Both $p_i$ and $q_i$ are of Type D.  We apply the dual operation and follow the steps of the previous part to $\switchx{\rotatex{P}}$ and $\switchx{\rotatex{Q}}$ in place of $\switchx{P}$ and $\switchx{Q}$ to deduce that $\jump(\switchx{\rotatex{P}}) \not\domleq \jump(\switchx{\rotatex{Q}})$.  By the inequality (c) that follows Corollary~\ref{cor:jump}, we conclude that $\Fsupp(P) \not\subseteq \Fsupp(Q)$.  Using the fact that $\switchx{\rotatex{P}}$ is isomorphic to $\rotatep{\switchx{P}}$ and both have all strict edges, we again refer to the paragraph following Corollary~\ref{cor:jump} to conclude that $\Msupp(\switchx{P}) \not\subseteq \Msupp(\switchx{Q})$.

\item $p_i$ is of Type D and $q_i$ is of Type U.  By the lexicographic order on $\I(P)$ and $\I(Q)$, we get $\card{P}{D} \geq i$ but $\card{Q}{D} + \card{Q}{I} < i$, contradicting Lemma~\ref{lem:caterpillar}(a).  Thus $\Fsupp(P) \not\subseteq \Fsupp(Q)$ and $\Msupp(\switchx{P}) \not\subseteq \Msupp(\switchx{Q})$.
\item $p_i$ is of Type U and $q_i$ is of Type D.  Considering $\I(P)$ and $\I(Q)$ from right-to-left, this means $\card{P}{U} \geq \ell+1-i$ but $\card{Q}{U} + \card{Q}{I} < \ell+1-i$, contradicting Lemma~\ref{lem:caterpillar}(b).
\item $p_i$ is of Type I.  If $q_i$ is of Type U, then we get $\card{P}{D} + \card{P}{I} \geq i$ but $\card{Q}{D} + \card{Q}{I} < i$, contradicting Lemma~\ref{lem:caterpillar}(a).   If $q_i$ is of Type D, then we get $\card{P}{U} + \card{P}{I} \geq \ell+1-i$ but $\card{Q}{U} + \card{Q}{I} < \ell+1-i$, contradicting Lemma~\ref{lem:caterpillar}(b).
\end{enumerate}
\end{proof}

\begin{remark}
Notably absent from Theorem~\ref{thm:caterpillar} is the condition $P \leqM Q$.  It is absent because it is not equivalent; the smallest example of a pair $P$ and $Q$ such that $P \leqM Q$ but $P \not\leqF Q$ is shown in Figure~\ref{fig:Mcounterexample}.  We can compute that $K_Q - K_P = F_{41} + F_{32} + F_{311} + F_{221} - F_{122}$ and is $M$-positive.  If we would prefer both $P$ and $Q$ to be connected, there exist such counterexamples using 6-element caterpillars.   

\begin{figure}[ht]
\begin{center}
\begin{tikzpicture}[scale=0.8]
\tikzstyle{every node} = [inner sep=2pt]L
\begin{scope}
\node at (0,0) [draw,circle] (P1) {};
\node at (0,1) [draw,circle] (P2) {};
\node at (0,2) [draw,circle] (P3) {};
\node at (0.7,1) [draw,circle] (P4) {};
\node at (-0.7,0) [draw,circle] (P5) {};
\draw (P1)--(P2);
\draw (P2)--(P3);
\draw (P1)--(P4);
\draw (P2)--(P5);
\node at (-1.6,1) {$P:$};
\end{scope}
\begin{scope}[xshift = 4.2cm]
\node at (0,0) [draw,circle] (P1) {};
\node at (0,1) [draw,circle] (P2) {};
\node at (0,2) [draw,circle] (P3) {};
\node at (0.7,2) [draw,circle] (P4) {};
\node at (1,1) [draw,circle] (P5) {};
\draw(P1)--(P2);
\draw (P2)--(P3);
\draw (P2)--(P4);
\node at (-1.6,1) {$Q:$};
\end{scope}
\end{tikzpicture}
\caption{An example where $P \leqM Q$ but $P \not\leqF Q$}
\label{fig:Mcounterexample}
\end{center}
\end{figure}
\end{remark}

Recent work, such as \cite{AAM24,ADM23,HaTs17,LiWe20a,LiWe20b,McWa14,Zho20+}, has looked for classes of posets where the elements are \emph{distinguished} by $K_P$, meaning that if $P$ and $Q$ are not isomorphic, then $K_P \neq K_Q$.  

\begin{corollary}
$K_P$ distinguishes the class of caterpillar posets.
\end{corollary}

\begin{proof}
Suppose $K_P = K_Q$ for caterpillars $P$ and $Q$.  Clearly $P$ and $Q$ must have the same number of elements.   By \cite[Prop.~3.7]{McWa14}, we have $K_{\switchx{P}}= K_{\switchx{Q}}$\,.  Then \cite[Prop.~4.2]{McWa14} tells us that the jump sequences of $\switchx{P}$ and $\switchx{Q}$ have the same length, which implies that the lengths of the spines of $\switchx{P}$ and $\switchx{Q}$ are equal.  Thus $P$ and $Q$ also have the same number of feet.  Theorem~\ref{thm:caterpillar} then implies that $\I(P) = \I(Q)$ and so $P$ and $Q$ are isomorphic.
\end{proof}  

\section{Open problems}\label{sec:conclusion}

We have not attempted to be comprehensive and there are many further avenues of investigation for comparing $\kpw$ and $\kqt$, whether according to positivity or support containment.  In addition to Question~\ref{que:longestchain}, let us mention a selection of other questions that arose during our investigations, divided into 4 types.

\begin{enumerate}
\renewcommand{\theenumi}{\arabic{enumi}}

\item A weakness of our necessary conditions in Section~\ref{sec:nec} is that most of those for generally labeled posets only require us to assume $M$-support containment.  Are there stronger necessary conditions that are based on $M$-positivity, $F$-support containment, or especially $F$-positivity?  

\item A useful source of ideas for us was to construct the entire $F$-positivity poset for small values of $\vert P\vert $ and see if our results could explain all the cover relations, and we recommend this approach for others interested in working in this area. 

 Let us mention a technique that explains some of the cover relations we encountered that are not explained by the results so far.  If a labeled poset $(P,\omega)$ has two incomparable elements $x$ and $y$, its set of $(P,\omega)$-partitions $\popf$ can be partitioned into two subsets: those where $\popf(x) \geq \popf(y)$ and those where $\popf(x) < \popf(y)$.  We use this idea to explain inequalities such as 
\begin{equation}\label{equ:decomp1}
\begin{tikzpicture}[scale=0.7]
\tikzstyle{every node} = [inner sep=2pt]
\node at (0,0.5) [draw,circle] (P5) {};
\node at (0.7,-0.5) [draw,circle] (P6) {};
\node at (0.7,0.5) [draw,circle] (P7) {};
\node at (0.7,1.5) [draw,circle] (P8) {};
\draw (P6)--(P7);
\draw (P7)--(P8);

\node at (2,0.5) {$\leq_F$};

\node at (3.2,0) [draw,circle] (P1) {};
\node at (3.2,1) [draw,circle] (P2) {};
\node at (4.2,0) [draw,circle] (P3) {};
\node at (4.2,1) [draw,circle] (P4) {};
\draw (P1)--(P2);
\draw (P2)--(P3);
\draw (P3)--(P4);

\node at (4.7,0.5) {.};
\end{tikzpicture} 
\end{equation}
Indeed, the lesser poset can be decomposed as 
\begin{equation}\label{equ:decomp2}
\begin{tikzpicture}[scale=0.7]
\tikzstyle{every node} = [inner sep=2pt]
\node at (1.3,1) [draw,circle] (P1) {};
\node at (2,0) [draw,circle] (P2) {};
\node at (2,1) [draw,circle] (P3) {};
\node at (2,2) [draw,circle] (P4) {};
\node at (0.9,1) {$y$};
\node at (2.4,2) {$x$};
\draw (P2)--(P3);
\draw (P3)--(P4);

\node at (3,1) {$=$};

\node at (4,1) [draw,circle] (P5) {};
\node at (4.5,2) [draw,circle] (P6) {};
\node at (5,1) [draw,circle] (P7) {};
\node at (5,0) [draw,circle] (P8) {};
\node at (3.6,1) {$y$};
\node at (4.9,2) {$x$};
\draw (P5)--(P6);
\draw (P6)--(P7);
\draw (P7)--(P8);

\node at (6,1) {$+$};

\node at (7,-0.5) [draw,circle] (P9) {};
\node at (7,0.5) [draw,circle] (P10) {};
\node at (7,1.5) [draw,circle] (P11) {};
\node at (7,2.5) [draw,circle] (P12) {};
\node at (6.6,2.5) {$y$};
\node at (7.4,1.5) {$x$};
\draw (P9)--(P10);
\draw (P10)--(P11);
\draw[double distance=2pt] (P11)--(P12);
\end{tikzpicture}
\end{equation}
(where each labeled poset $(P,\om)$ is represting $\kpw$ in this algebraic expression)
and the greater poset can be decomposed as
\begin{equation}\label{equ:decomp3}
\begin{tikzpicture}[scale=0.7]
\tikzstyle{every node} = [inner sep=2pt]
\node at (1,0.5) [draw,circle] (P1) {};
\node at (1,1.5) [draw,circle] (P2) {};
\node at (2,0.5) [draw,circle] (P3) {};
\node at (0.6,1.5) {$x$};
\node at (2.4,1.5) {$y$};
\node at (2,1.5) [draw,circle] (P4) {};
\draw (P1)--(P2);
\draw (P2)--(P3);
\draw (P3)--(P4);

\node at (3,1) {$=$};

\node at (4,1) [draw,circle] (P5) {};
\node at (4.5,2) [draw,circle] (P6) {};
\node at (4.1,2) {$x$};
\node at (5,1) [draw,circle] (P7) {};
\node at (5.4,1) {$y$};
\node at (5,0) [draw,circle] (P8) {};
\draw (P5)--(P6);
\draw (P6)--(P7);
\draw (P7)--(P8);

\node at (6.2,1) {$+$};

\node at (7,0) [draw,circle] (P9) {};
\node at (8,0) [draw,circle] (P10) {};
\node at (7.5,1) [draw,circle] (P11) {};
\node at (7.1,1) {$x$};
\node at (7.9,2) {$y$};
\node at (7.5,2) [draw,circle] (P12) {};
\draw (P9)--(P11);
\draw (P10)--(P11);
\draw[double distance=2pt] (P11)--(P12);
\node at (8.5,1) {.};
\end{tikzpicture} 
\end{equation}
These two decompositions explain the inequality in~\eqref{equ:decomp1} since the rightmost labeled poset in~\eqref{equ:decomp3} can be obtained from the rightmost poset in~\eqref{equ:decomp2} by generalized-redundancy-before-deletion.

However, it is important not to overuse this technique in the following sense.  By repeated applications of such decompositions to labeled posets $(P,\om)$ and $(Q,\tau)$, one can eventually arrive at two sums of labeled chains which we can try to compare.  But these sums of chains are just the graphical representations of the $F$-expansions of $\kpw$ and $\kqt$, so this approach is doing nothing more than comparing $\kpw$ and $\kqt$ by explicitly computing their $F$-expansions.

\item There are ways we could attempt to strengthen the results of Section~\ref{sec:greene}.
\begin{itemize}
\item Perhaps the most obvious and compelling is to find larger classes of posets for which there are simple conditions that are both necessary and sufficient for $F$-positivity.  A class that would generalize and unify the posets of Greene shape $(k,1)$ of Theorem~\ref{thm:k1} and the caterpillar posets of Theorem~\ref{thm:caterpillar} would be ``caterpillar-like" posets that also allow feet of Type B (as defined in Figure~\ref{fig:4types}). 
\item Theorem~\ref{thm:caterpillar} suggests the question of what happens when we allow both strict and weak edges in the caterpillar graph.  Relations such as 
\begin{equation}\label{eqn:nonident_spine_counter}
\begin{tikzpicture}[scale=0.8]
\tikzstyle{every node} = [inner sep=2pt]
\begin{scope}[xshift = 3.2cm]
\node at (0,0) [draw,circle] (P1) {};
\node at (0,1) [draw,circle] (P2) {};
\node at (0,2) [draw,circle] (P3) {};
\node at (-0.7,1) [draw,circle] (P4) {};
\node at (0.7,1) [draw,circle] (P5) {};
\draw (P1)--(P2);
\draw (P2)--(P3);
\draw (P4)--(P3);
\draw (P5)--(P3);
\end{scope}
\begin{scope}
\node at (0,0) [draw,circle] (P1) {};
\node at (0,1) [draw,circle] (P2) {};
\node at (0,2) [draw,circle] (P3) {};
\node at (-0.7,0) [draw,circle] (P4) {};
\node at (0.7,0) [draw,circle] (P5) {};
\draw[double distance=2pt] (P1)--(P2);
\draw (P2)--(P3);
\draw (P4)--(P2);
\draw (P5)--(P2);
\node at (1.6,1) {$\leq_F$};
\end{scope}
\end{tikzpicture}
\end{equation}
mean that we can still get comparability.  Can we properly explain such inequalities?

\end{itemize}

\item 

We have focused largely on the case when the assignment of strict/weak edges is arbitrary.  But in the equality question for $\kpw$, it is possible to get stronger results by restricting one's attention to naturally labeled posets, as is done in \cite{HaTs17,LiWe20a,LiWe20b,McWa14} and as we did here in Subsection~\ref{sub:convex} and Section~\ref{sec:greene}.  Are there other results in this paper that can be advanced or new types of results that can be obtained by restricting to the naturally labeled case?  The necessary conditions of \cite[Section~3]{LiWe20a} might be a good source of ideas.\end{enumerate}

\bibliography{ppartitionpositivity}

\newcommand{\etalchar}[1]{$^{#1}$}
\newcommand{\noopsort}[1]{} \newcommand{\printfirst}[2]{#1}
  \newcommand{\singleletter}[1]{#1} \newcommand{\switchargs}[2]{#2#1}
\begin{thebibliography}{HLMvW11}

\bibitem[AAM24]{AAM24}
Doriann Albertin, Jean-Christophe Aval, and Hugo Mlodecki.
\newblock Quasisymmetric invariants for families of posets.
\newblock {\em Australas. J. Combin.}, 90:341--356, 2024.

\bibitem[AB20]{AsBe20}
Sami Assaf and Nantel Bergeron.
\newblock Flagged ( {P}, {$\rho$} ) -partitions.
\newblock {\em European J. Combin.}, 86:103085, 2020.

\bibitem[ADM23]{ADM23}
Jean-Christophe Aval, Karimatou Djenabou, and Peter R.~W. McNamara.
\newblock Quasisymmetric functions distinguishing trees.
\newblock {\em Algebr. Comb.}, 6(3):595--614, 2023.

\bibitem[BBR06]{BBR06}
Fran{\c{c}}ois Bergeron, Riccardo Biagioli, and Mercedes~H. Rosas.
\newblock Inequalities between {L}ittlewood-{R}ichardson coefficients.
\newblock {\em J. Combin. Theory Ser. A}, 113(4):567--590, 2006.

\bibitem[BBS{\etalchar{+}}14]{BBSSZ14}
Chris Berg, Nantel Bergeron, Franco Saliola, Luis Serrano, and Mike Zabrocki.
\newblock A lift of the {S}chur and {H}all-{L}ittlewood bases to
  non-commutative symmetric functions.
\newblock {\em Canad. J. Math.}, 66(3):525--565, 2014.

\bibitem[BHK18a]{BHK18+}
Thomas Browning, Max Hopkins, and Zander Kelley.
\newblock Doppelgangers: the {U}r-operation and posets of bounded height.
\newblock \url{https://arxiv.org/abs/1710.10407}, 2018+.

\bibitem[BHK18b]{BHK18}
Thomas Browning, Max Hopkins, and Zander Kelley.
\newblock Doppelgangers: the {U}r-operation and posets of bounded height
  (extended abstract).
\newblock {\em S\'{e}m. Lothar. Combin.}, 80B:Art. 80, 12, 2018.

\bibitem[BO14]{BaOr14}
Cristina Ballantine and Rosa Orellana.
\newblock Schur-positivity in a square.
\newblock {\em Electron. J. Combin.}, 21(3):Paper 3.46, 36, 2014.

\bibitem[CM18]{CaMe18}
Erik Carlsson and Anton Mellit.
\newblock A proof of the shuffle conjecture.
\newblock {\em J. Amer. Math. Soc.}, 31(3):661--697, 2018.

\bibitem[DHT02]{DHT02}
G{\'e}rard Duchamp, Florent Hivert, and Jean-Yves Thibon.
\newblock Noncommutative symmetric functions. {VI}. {F}ree quasi-symmetric
  functions and related algebras.
\newblock {\em Internat. J. Algebra Comput.}, 12(5):671--717, 2002.

\bibitem[DKLT96]{DKLT96}
G{\'e}rard Duchamp, Daniel Krob, Bernard Leclerc, and Jean-Yves Thibon.
\newblock Fonctions quasi-sym\'etriques, fonctions sym\'etriques non
  commutatives et alg\`ebres de {H}ecke \`a {$q=0$}.
\newblock {\em C. R. Acad. Sci. Paris S\'er. I Math.}, 322(2):107--112, 1996.

\bibitem[DP07]{DoPy07}
Galyna Dobrovolska and Pavlo Pylyavskyy.
\newblock On products of {$\mathfrak{sl}\sb \mathfrak{n}$} characters and
  support containment.
\newblock {\em J. Algebra}, 316(2):706--714, 2007.

\bibitem[Ehr96]{Ehr96}
Richard Ehrenborg.
\newblock On posets and {H}opf algebras.
\newblock {\em Adv. Math.}, 119(1):1--25, 1996.

\bibitem[F{\'{e}}r15]{Fer15}
Valentin F{\'{e}}ray.
\newblock Cyclic inclusion-exclusion.
\newblock {\em SIAM J. Discrete Math.}, 29(4):2284--2311, 2015.

\bibitem[FFLP05]{FFLP05}
Sergey Fomin, William Fulton, Chi-Kwong Li, and Yiu-Tung Poon.
\newblock Eigenvalues, singular values, and {L}ittlewood-{R}ichardson
  coefficients.
\newblock {\em Amer. J. Math.}, 127(1):101--127, 2005.

\bibitem[Ges84]{Ges84}
Ira~M. Gessel.
\newblock Multipartite {$P$}-partitions and inner products of skew {S}chur
  functions.
\newblock In {\em Combinatorics and algebra (Boulder, Colo., 1983)}, volume~34
  of {\em Contemp. Math.}, pages 289--301. Amer. Math. Soc., Providence, RI,
  1984.

\bibitem[Ges90]{Ges90}
Ira~M. Gessel.
\newblock Quasi-symmetric functions.
\newblock Unpublished manuscript, 1990.

\bibitem[Gre76]{Gre76}
Curtis Greene.
\newblock Some partitions associated with a partially ordered set.
\newblock {\em J. Combinatorial Theory Ser. A}, 20(1):69--79, 1976.

\bibitem[Hag04]{Hag04}
J.~Haglund.
\newblock A combinatorial model for the {M}acdonald polynomials.
\newblock {\em Proc. Natl. Acad. Sci. USA}, 101(46):16127--16131, 2004.

\bibitem[HHL05a]{HHL05}
J.~Haglund, M.~Haiman, and N.~Loehr.
\newblock A combinatorial formula for {M}acdonald polynomials.
\newblock {\em J. Amer. Math. Soc.}, 18(3):735--761, 2005.

\bibitem[HHL{\etalchar{+}}05b]{HHLRU05}
J.~Haglund, M.~Haiman, N.~Loehr, J.~B. Remmel, and A.~Ulyanov.
\newblock A combinatorial formula for the character of the diagonal
  coinvariants.
\newblock {\em Duke Math. J.}, 126(2):195--232, 2005.

\bibitem[HLMvW11]{HLMvW11}
J.~Haglund, K.~Luoto, S.~Mason, and S.~van Willigenburg.
\newblock Quasisymmetric {S}chur functions.
\newblock {\em J. Combin. Theory Ser. A}, 118(2):463--490, 2011.

\bibitem[HT17]{HaTs17}
Takahiro Hasebe and Shuhei Tsujie.
\newblock Order quasisymmetric functions distinguish rooted trees.
\newblock {\em J. Algebraic Combin.}, 46(3-4):499--515, 2017.

\bibitem[Kir04]{Kir04}
Anatol~N. Kirillov.
\newblock An invitation to the generalized saturation conjecture.
\newblock {\em Publ. Res. Inst. Math. Sci.}, 40(4):1147--1239, 2004.

\bibitem[KT97]{KrTh97}
Daniel Krob and Jean-Yves Thibon.
\newblock Noncommutative symmetric functions. {IV}. {Q}uantum linear groups and
  {H}ecke algebras at {$q=0$}.
\newblock {\em J. Algebraic Combin.}, 6(4):339--376, 1997.

\bibitem[KWvW08]{KWvW08}
Ronald~C. King, Trevor~A. Welsh, and Stephanie~J. van Willigenburg.
\newblock Schur positivity of skew {S}chur function differences and
  applications to ribbons and {S}chubert classes.
\newblock {\em J. Algebraic Combin.}, 28(1):139--167, 2008.

\bibitem[LLT97]{LLT97}
Alain Lascoux, Bernard Leclerc, and Jean-Yves Thibon.
\newblock Ribbon tableaux, {H}all-{L}ittlewood functions, quantum affine
  algebras, and unipotent varieties.
\newblock {\em J. Math. Phys.}, 38(2):1041--1068, 1997.

\bibitem[LM22]{LeMc22}
Nathan R.~T. Lesnevich and Peter R.~W. McNamara.
\newblock Positivity among {$P$}-partition generating functions.
\newblock {\em Ann. Comb.}, 26(1):171--204, 2022.

\bibitem[LM25]{LeMc22_correction}
Nathan R.~T. Lesnevich and Peter R.~W. McNamara.
\newblock Correction to: Positivity among {$P$}-partition generating functions.
\newblock {\em Ann. Comb.}, 2025.
\newblock https://doi.org/10.1007/s00026-025-00744-3.

\bibitem[LMvW13]{LMvW13}
Kurt Luoto, Stefan Mykytiuk, and Stephanie van Willigenburg.
\newblock {\em An introduction to quasisymmetric {S}chur functions}.
\newblock SpringerBriefs in Mathematics. Springer, New York, 2013.

\bibitem[LP08]{LaPy08}
Thomas Lam and Pavlo Pylyavskyy.
\newblock {$P$}-partition products and fundamental quasi-symmetric function
  positivity.
\newblock {\em Adv. in Appl. Math.}, 40(3):271--294, 2008.

\bibitem[LPP07]{LPP07}
Thomas Lam, Alexander Postnikov, and Pavlo Pylyavskyy.
\newblock Schur positivity and {S}chur log-concavity.
\newblock {\em Amer. J. Math.}, 129(6):1611--1622, 2007.

\bibitem[LW20a]{LiWe20a}
Ricky~Ini Liu and Michael Weselcouch.
\newblock {$P$}-partition generating function equivalence of naturally labeled
  posets.
\newblock {\em J. Combin. Theory Ser. A}, 170:105136, 31, 2020.

\bibitem[LW20b]{LiWe20b}
Ricky~Ini Liu and Michael Weselcouch.
\newblock ${P}$-partitions and quasisymmetric power sums.
\newblock {\em Int. Math. Res. Not. IMRN}, 02 2020.
\newblock rnz375.

\bibitem[Mal93]{MalThesis}
Claudia Malvenuto.
\newblock {\em Produits et coproduits des fonctions quasi-sym\'etriques et de
  l'alg\`ebre des descentes}, volume~16 of {\em Publications du Laboratoire de
  Combinatoire et d'Informatique Math\'ematique}.
\newblock Laboratoire de Combinatoire et d'Informatique Math\'ematique,
  Universit\'e du Qu\'ebec \`a Montr\'eal, 1993.
\newblock Ph.D. thesis.

\bibitem[McN14]{McN14}
Peter R.~W. McNamara.
\newblock Comparing skew {S}chur functions: a quasisymmetric perspective.
\newblock {\em J. Comb.}, 5(1):51--85, 2014.

\bibitem[MR95]{MaRe95}
Claudia Malvenuto and Christophe Reutenauer.
\newblock Duality between quasi-sym\-met\-ric functions and the {S}olomon
  descent algebra.
\newblock {\em J. Algebra}, 177(3):967--982, 1995.

\bibitem[MR98]{MaRe98}
Claudia Malvenuto and Christophe Reutenauer.
\newblock Plethysm and conjugation of quasi-symmetric functions.
\newblock {\em Discrete Math.}, 193(1-3):225--233, 1998.
\newblock Selected papers in honor of Adriano Garsia (Taormina, 1994).

\bibitem[MvW09]{McvW09b}
Peter R.~W. McNamara and Stephanie van Willigenburg.
\newblock Positivity results on ribbon {S}chur function differences.
\newblock {\em European J. Combin.}, 30(5):1352--1369, 2009.

\bibitem[MvW12]{McvW12}
Peter R.~W. McNamara and Stephanie van Willigenburg.
\newblock Maximal supports and {S}chur-positivity among connected skew shapes.
\newblock {\em European J. Combin.}, 33(6):1190--1206, 2012.

\bibitem[MW14]{McWa14}
Peter R.~W. McNamara and Ryan~E. Ward.
\newblock Equality of {$P$}-partition generating functions.
\newblock {\em Ann. Comb.}, 18(3):489--514, 2014.

\bibitem[Nor79]{Nor79}
P.~N. Norton.
\newblock {$0$}-{H}ecke algebras.
\newblock {\em J. Austral. Math. Soc. Ser. A}, 27(3):337--357, 1979.

\bibitem[Oko97]{Oko97}
Andrei Okounkov.
\newblock Log-concavity of multiplicities with application to characters of
  {${\rm U}(\infty)$}.
\newblock {\em Adv. Math.}, 127(2):258--282, 1997.

\bibitem[Pec20]{Pec20}
O.~Pechenik.
\newblock The {G}enomic {S}chur {F}unction is {F}undamental-{P}ositive.
\newblock {\em Ann. Comb.}, 24(1):95--108, 2020.

\bibitem[RS06]{RhSk06}
Brendon Rhoades and Mark Skandera.
\newblock Kazhdan-{L}usztig immanants and products of matrix minors.
\newblock {\em J. Algebra}, 304(2):793--811, 2006.

\bibitem[RS10]{RhSk10}
Brendon Rhoades and Mark Skandera.
\newblock Kazhdan-{L}usztig immanants and products of matrix minors. {II}.
\newblock {\em Linear Multilinear Algebra}, 58(1-2):137--150, 2010.

\bibitem[Sta71]{StaThesis71}
Richard~P. Stanley.
\newblock {\em Ordered structures and partitions}.
\newblock PhD thesis, Harvard University, 1971.

\bibitem[Sta72]{StaThesis}
Richard~P. Stanley.
\newblock {\em Ordered structures and partitions}.
\newblock American Mathematical Society, Providence, R.I., 1972.
\newblock Memoirs of the American Mathematical Society, No. 119.

\bibitem[Sta99]{ec2}
Richard~P. Stanley.
\newblock {\em Enumerative combinatorics. {V}ol. 2}, volume~62 of {\em
  Cambridge Studies in Advanced Mathematics}.
\newblock Cambridge University Press, Cambridge, 1999.

\bibitem[Sta12]{ec1e2}
Richard~P. Stanley.
\newblock {\em Enumerative combinatorics. {V}olume 1}, volume~49 of {\em
  Cambridge Studies in Advanced Mathematics}.
\newblock Cambridge University Press, Cambridge, second edition, 2012.

\bibitem[{The}19]{SageMath}
{The Sage Developers}.
\newblock {\em {S}ageMath, the {S}age {M}athematics {S}oftware {S}ystem
  ({V}ersion 8.8)}, 2019.
\newblock {\tt https://www.sagemath.org}.

\bibitem[TvW18]{TovW18}
Foster Tom and Stephanie van Willigenburg.
\newblock Necessary conditions for {S}chur maximality.
\newblock {\em Electron. J. Combin.}, 25(2):Paper 2.30, 50 pp., 2018.

\bibitem[Zho20]{Zho20+}
Jeremy Zhou.
\newblock Reconstructing rooted trees from their strict order quasisymmetric
  functions.
\newblock \href{arXiv:2008.00424}{https://arxiv.org/abs/2008.00424}, 2020+.

\end{thebibliography}
\bibliographystyle{alpha}

\end{document}